\documentclass[11pt]{amsart}
\pdfoutput=1
\usepackage{amssymb}
\usepackage{amsmath}
\usepackage{graphicx,wrapfig}
\usepackage{amsthm}
\usepackage{latexsym}
\usepackage{subfigure}
\usepackage{rotating}
\usepackage{hyperref}
\usepackage[top=3cm, bottom=3cm, left=2.75cm, right=2.75cm]{geometry}
\usepackage{amsaddr}

%
\newtheorem{theorem}{Theorem}
\newtheorem{lemma}{Lemma}
\newtheorem{remark}{Remark}

%
%

%
 
%
\newcommand{\aql}{a^{\rm char}}
\newcommand{\lql}{l^{\rm char}}

\newcommand{\nperp}{n^{\perp}}

\newcommand{\GammaWall}{\Gamma_{\rm wall}}
\newcommand{\GammaIn}{\Gamma_{\rm in}}
\newcommand{\GammaOut}{\Gamma_{\rm out}}
\newcommand{\GammaSym}{\Gamma_{\rm sym}}
\newcommand{\GammaCharacteristic}{\Gamma_{\rm char}}

\newcommand{\vD}{v^{\rm D}}
\newcommand{\pD}{p^{\rm D}}
\newcommand{\uD}{u^{\rm D}}
\newcommand{\lambdap}{\lambda_{\rm p}}
\newcommand{\lambdam}{\lambda_{\rm m}}
\newcommand{\lambdapm}{\lambda_{\rm p/m}}

\newcommand{\rightpm}{r_{\rm p/m}}

\renewcommand{\div}{\operatorname{div}}

\newcommand{\norm}[1]{\|#1\|}

\newcommand{\eps}{\varepsilon}

\newcommand{\R}{\mathbb R}
\newcommand{\Pe}{\mathrm {Pe}}

\newcommand{\Cells}{\mathcal K}

\newcommand{\intOmega}{\int_{\Omega}}
\newcommand{\aStokes}{a^{\rm St}}
\newcommand{\lStokes}{l^{\rm St}}
\newcommand{\aVisc}{a^{\rm visc}}
\newcommand{\lVisc}{l^{\rm visc}}
\newcommand{\aEuler}{a^{\rm Eu}}
\newcommand{\astab}{a_{\rm stab}}
\newcommand{\lstab}{l_{\rm stab}}
\newcommand{\lEuler}{l^{\rm Eu}}

\newcommand{\aNavierStokes}{a^{\rm NaSt}}
\newcommand{\aNavierStokesSUPG}{a^{\rm NaSt}_{\rm stab}}
\newcommand{\lNavierStokesSUPG}{l^{\rm NaSt}_{\rm stab}}
\newcommand{\lNavierStokes}{l^{\rm NaSt}}
\newcommand{\ThetaM}[1]{#1^{-_{\Theta}}}
\newcommand{\ThetaP}[1]{#1^{+_{\Theta}}}
%

\title{Finite element formulation of general boundary conditions for incompressible flows}
\author{Roland Becker, Daniela Capatina, Robert Luce, and David Trujillo} 
\address{Equipe Concha, University of Pau, IPRA-LMA, avenue de l'universit{\a'e} BP 1155, 64013 Pau, France}
\begin{document}
\maketitle





%
%
\begin{abstract}
We study the finite element formulation of general boundary conditions for incompressible flow problems.
Distinguishing between the contributions from the inviscid and viscid parts of the equations, we use Nitsche's method to develop a discrete weighted weak formulation valid for all values of the viscosity parameter, including the limit case of the Euler equations. In order to control the discrete kinetic energy, additional consistent terms are introduced.
We treat the limit case as a (degenerate) system of hyperbolic equations, using a balanced spectral decomposition of the flux Jacobian matrix, in analogy with compressible flows. Then, following the theory of Friedrich's systems, the natural characteristic boundary condition is generalized to the considered physical boundary conditions.
Several numerical experiments, including standard benchmarks for viscous flows as well as inviscid flows are presented. 
\end{abstract}
%


\section{Introduction}\label{sec:introduction}
%
The subject of this article is the finite element formulation of general boundary conditions for incompressible flow problems in a bounded domain $\Omega\subset\mathbb R^d$ ($d=2,\,3$). The velocity field $v$ and pressure $p$ are governed by the Navier-Stokes equations
\begin{equation}\label{eq:NavierStokesIncompressible}
\rho(\frac{\partial v}{\partial t} + v \cdot \nabla v) +\nabla p - \mu\Delta v= f,\quad \div v = 0,
\end{equation}
together with initial condition  $v(0) = v_0$  and constants $\rho>0$ and $\mu\ge0$. For $\mu=0$ we have the Euler equations.

We consider five types of boundary conditions for (\ref{eq:NavierStokesIncompressible}): wall, inflow, outflow, symmetry and characteristic conditions, see Table~\ref{tab:BC}. Depending on whether the flow is inviscid or not, the boundary conditions change in nature, e.g., no-penetration versus no-slip in the case of a rigid wall. Correspondingly, we subdivide the boundary into $\partial\Omega=\GammaWall\cup\GammaIn\cup\GammaOut\cup\GammaSym\cup\GammaCharacteristic$ with $\GammaSym$ a hyperplane.
In what follows, $u=(v,p)$ and $B$ is a symmetric matrix related to the negative part of the Jacobian, see below.
\begin{table}[htdp]
\begin{center}
\begin{tabular}{c|c|c}
& $\mu=0$ & $\mu>0$\\\hline
$\GammaWall$ & $v\cdot n = 0$ & $v=0$,\\
$\GammaIn$ & $v = \vD$ & $v=\vD$,\\
$\GammaOut$ & $p = \pD$ & $\displaystyle{\mu\frac{\partial v}{\partial n}}- p n = -\pD n$,\\
$\GammaSym$ & $v\cdot n = 0$ & $v\cdot n = 0,\; \mu \displaystyle{\frac{\partial v}{\partial n} }\times n= 0$,\\
$\GammaCharacteristic$ & $B(u-\uD) = 0$ & $\displaystyle{(\mu\frac{\partial v}{\partial n},0)^T}-B(u-\uD)=0$.\\
\end{tabular}
\end{center}
\caption{Considered boundary conditions}
\label{tab:BC}
\end{table}
In contrast to the first four boundary conditions, the physical meaning of the characteristic boundary condition is less obvious, since it corresponds to an a priori unknown weighting of the different variables, depending on the definition of $B$. It is however the most natural one for a first-order system in the sense of Friedrich, see for example \cite{ErnGuermond06a, Jensen05}. Note that the outflow boundary condition is often used in order to limit the computational domain by introduction of an artificial boundary $\GammaOut$.

Our approach for developing a discrete weak formulation is outlined as follows. 
We distinguish between the contributions from the inviscid (Euler) and viscid (Stokes) parts of the equations and use Nitsche's method \cite{Nitsche71}, which has originally been developed for the Poisson problem; it has been extended to the Navier-Stokes equations, see for instance \cite{Becker02,BurmanFernandezHansbo06,BazilevsHughes07}. In the last cited paper the potential of the method to produce a physically meaningful weighting between diffusive and convective terms has been clearly demonstrated by comparison with the strong implementation of boundary conditions. This idea, which is particularly interesting for high P\a'eclet numbers, has then been extended in \cite{BazilevsMichlerCalo07} to turbulent flows by incorporating a wall law into the weak formulation.

In this paper, we use Nitsche's method to define a weighted weak formulation valid for all values of the viscosity parameter, including the limit case of the Euler equations. Our goal being the control of the discrete kinetic energy, additional consistent terms are further introduced in the discrete formulation. In order to limit the presentation, we focus here on continuous finite element spaces. Furthermore, in this paper we only discuss space discretization.

The analogous treatment for the convection-diffusion equation has been successfully applied in the literature, leading to robustness with respect to the diffusion parameter, see for example \cite{BeckerHansbo00}. In contrast to the case of the Navier-Stokes equations, the singular limit (the linear transport equation) is theoretically well-understood. Additional difficulties which arise in the present situation are the variety of boundary conditions and the coupling between velocities and pressure. Moreover, the meaning of robustness is not well-understood, since the incompressible Euler equations are known to admit very complex solutions. Their mathematical theory is an active topic of research, for example the blow-up in three dimensions \cite{Hou09}, or the notion of weak solutions \cite{Lions96,De-LellisSzekelyhidi13,SzekelyhidiWiedemann12}. In contrast to the compressible Euler equations, we cannot use entropies as a roadmap for the development of numerical methods. We therefore use the kinetic energy as a guideline, making sure that the discrete equations do not generate unphysical growth in energy.

The summary of the article is as follows.  Section~\ref{sec:WeakEulerChar} is devoted to the inviscid equations with the characteristic boundary condition. We write the Euler equations as a degenerate first-order system and introduce a balanced spectral decomposition of the flux Jacobian in order to define the boundary matrix $B$ in Table~\ref{tab:BC}. The term 'balanced' refers to the fact that the resulting boundary condition has the same dimensioning as the equations (\ref{eq:NavierStokesIncompressible}) in the interior of the domain.
 
Then in Section~\ref{sec:WeakEulerGen} we generalize this boundary condition to the other physical conditions of Table~\ref{tab:BC}, by letting the data of the characteristic condition depend on the unknowns. For the wall condition, such a technique is often employed in compressible flows, using reflection at a solid wall. However, it turns out that additional terms should be introduced in order to control the kinetic energy. These terms are consistent, except for the outflow condition in case of re-entrant flows, where we add an integral which corresponds to a modification of the outflow condition. 
Modifications aimed to increase stability in this case have previously been proposed \cite{BruneauFabrie96,BoyerFabrie07,BazilevsMichlerCalo10a,BraackMucha13} from a different point of view. 

In Section~\ref{subsec:WeakNavierStokes} we add the viscous terms to recover the Navier-Stokes equations.  We first introduce the discrete weak formulation for the Stokes equations, based on a generalization of Nitsche's method. Then we present the weak formulation for the Navier-Stokes equations and we briefly discuss the choice of stabilization terms in light of the balanced scaling of the absolute value of the Jacobian.
Further, for comparison with the proposed method, we present an alternative finite element discretization of the Navier-Stokes equations based on strong enforcement of the normal velocity in the discrete space. 
 
Finally, Section~\ref{sec:NumExp} presents various numerical experiments involving standard test cases. We use the backward facing step problem and the flow around a cylinder to investigate the behavior of the outflow boundary condition. The first example also illustrates the necessity to control the kinetic energy.
Then the Kovasznay flow is used to investigate robustness with respect to the viscosity parameter. As examples for inviscid flows, we consider the standing vortex problem, the rotational flow given by Fraenkel, and the impact of a jet.
Comparisons with the alternative discretization are also carried out for the Kovasznay flow and the jet impact problem. 
Although the presented computations were all based on equal-order $Q^1$ finite elements with SUPG stabilization, our theoretical results carry over to other continuous discrete finite element spaces. 
%
\subsection{Notation}\label{subsec:}
%
Let us first introduce some notation. The outward unit normal to $\partial\Omega$ is denoted by $n$. In 2D, we also use the notation $n^{\perp}=(-n_2,n_1)^T$. We will frequently write $v_n$ for $v\cdot n$ and $v_{n}^{\perp}=v-v_n n$. 
We denote for $x\in\R$ the positive part by $x^{+}:=\max\{x,0\}$ and the negative part by $x^{-}:=x-x^{+}$. Notice that 
$(-x)^{+}=-x^{-}$ and $(-x)^{-}=-x^{+}$.
Similarly, for a symmetric matrix $A$, we define $A^- :=  R\Lambda^- R^{T}$ and $|A|:=R|\Lambda| R^{T}$ if $A = R\Lambda R^{T}$ with $\Lambda$ diagonal and $R$ orthogonal. 

We will frequently use the symbols $\lesssim$ (and $\simeq$) in order to indicate that a quantity is bounded above (and below) up to a positive constant independent of the parameters of interest, such as physical parameters and discretization parameters. 

Throughout, we let $\mathbb{V}_h$ and $\mathbb W_h$ denote finite element spaces of continuous functions constructed on simplicial, quadrilateral, or hexahedral meshes of maximal cell width $d_h$. We denote by $d_K$ the diameter of the cell $K$. 
We denote by $u_h(t)=(v_h(t),p_h(t))\in \mathbb{V}_h\times \mathbb W_h$ the space-discrete solution and by $\psi_h=(\phi_h,\chi_h)\in \mathbb{V}_h\times \mathbb W_h$ a couple of test functions.

\section{Euler equations with characteristic boundary condition}\label{sec:WeakEulerChar}

We define the matrices
\begin{equation*}
A^i(u)=
\begin{bmatrix}
\rho v_{i}I & J_i\\
 J_i^T & 0 \\ 
\end{bmatrix},\quad A_n (u) = \sum_{i=1}^d n_i A^i(u)=\begin{bmatrix}
\rho v_n I & n\\
n^T & 0
\end{bmatrix},\quad
M = 
\begin{bmatrix}
\rho I& 0\\
 0& 0  \\ 
\end{bmatrix},
\end{equation*}
where $I$ is the $d\times d$ identity  whereas $J_i$ is the $d\times 1$ matrix of elements equal to $\delta_{ij}$ ($1\leq i,\, j\leq d$).

Then the inviscid part of (\ref{eq:NavierStokesIncompressible}) can be written as a first-oder system in quasi-linear form as
\begin{equation}\label{eq:QL}
M \frac{\partial u}{\partial t} +\sum_{i=1}^d A^i(u) \frac{\partial u}{\partial x_i} =F,\quad F=\begin{bmatrix} f\\ 0\end{bmatrix}.
\end{equation}

The natural boundary condition associated to (\ref{eq:QL}) is the characteristic boundary condition, which we write here as $B (u-\uD) = 0$ on $\partial\Omega=\GammaCharacteristic$ with $B$ to be defined.
 
 In the following, we first address the question of non-dimensionalization of the characteristic boundary condition and consequently, the choice of $B$. The simplest choice $B=A_n^-$ on $\GammaCharacteristic$ does not yield the desired property, as discussed in the next subsection. Then we give a weak formulation which satisfies an energy estimate.
%
\subsection{Balancing of the boundary condition}\label{subsec:BalancedBC}
%
Let us introduce 
\begin{equation}\label{eq:}
\Theta=
\begin{bmatrix}
I & 0 \\ 0 & \theta
\end{bmatrix}, \quad \theta >0
\end{equation}
and define:
\begin{equation*}
|A|_{\Theta} := \Theta^{-1}|\Theta A \Theta|\Theta^{-1}, \quad \ThetaM{A}:=\Theta^{-1}(\Theta A \Theta)^{-}\Theta^{-1}.
\end{equation*}
It is important to note that
\begin{equation}\label{eq:A_theta}
\frac12|A|_{\Theta}+\ThetaM{A}=\frac12 A.
\end{equation}
An appropriate absolute value function for Jacobian matrices has been used in the context of compressible flows in \cite{Barth99} in order to obtain a proper scaling of eigenvectors.

Here, the idea is to impose the characteristic boundary condition associated to (\ref{eq:QL}):
\begin{equation}\label{eq:CharBdryCond}
\ThetaM{A_n(u)}(u-\uD) = 0 \quad \text{on } \partial\Omega=\GammaCharacteristic,
\end{equation}
and to choose $\theta$ in such a way that this boundary condition scales as the Euler equations.

For this purpose, let us begin by stating the spectral decomposition of the matrix $\Theta A_n(u) \Theta$; the proofs of the two following lemmas are given in the Appendix.
\begin{lemma}\label{lemma:EulerEigen}
The symmetric matrix 
\begin{equation*}
\Theta A_n(u) \Theta= \begin{bmatrix}\rho v_n I & \theta n\\ \theta n^T & 0 \end{bmatrix}
\end{equation*}
 has real eigenvalues and a basis of orthonormal eigenvectors.  For $d=2$, the eigenvalues are given by
\begin{equation}\label{eq:Eigenvalues}
\lambda = \rho v_n,\quad \lambdapm :=  \frac{\rho v_{n}}{2} \pm \frac{\sqrt{4\theta^2+\rho ^2v_{n}^2}}{2},
\end{equation}
and the corresponding right eigenvectors by
\begin{equation*}
r=
\begin{bmatrix}
\nperp\\
0
\end{bmatrix},\quad
\rightpm =
\frac{1}{\sqrt{\theta^2+\lambdapm^2}}
\begin{bmatrix}
 \lambdapm n\\
\theta
\end{bmatrix}.
\end{equation*}
For $d=3$, $\lambdapm$ and $ \rightpm $ are the same and $\lambda=\rho v_n$ is a double eigenvalue, of corresponding eigenvectors $\begin{bmatrix} t_1\\0\end{bmatrix}$  and $\begin{bmatrix} t_2\\0\end{bmatrix}$ with $\lbrace t_1,\,t_2,\, n\rbrace $ an orthonormal basis of $\R^3$. 
\end{lemma}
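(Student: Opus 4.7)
The plan is to exploit the block structure and the symmetry of $\Theta A_n(u)\Theta$. Since the matrix is real and symmetric, the existence of real eigenvalues and a full orthonormal basis of eigenvectors is automatic; the task reduces to exhibiting such a basis explicitly and identifying the eigenvalues.

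The key observation is that $n$ is distinguished: any vector in $\R^{d+1}$ can be written as $(\alpha n + w,\gamma)^T$ with $w\cdot n = 0$, and a direct computation shows
\[
\Theta A_n(u)\Theta \begin{bmatrix} \alpha n + w \\ \gamma \end{bmatrix}
= \begin{bmatrix} (\rho v_n\alpha + \theta\gamma)\,n + \rho v_n\,w \\ \theta\alpha \end{bmatrix}.
\]
Hence the subspace $\{(w,0)^T : w\cdot n = 0\}$ and the two-dimensional subspace spanned by $(n,0)^T$ and $(0,1)^T$ are both invariant and mutually orthogonal in $\R^{d+1}$, so the spectral problem decouples onto these two blocks.

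On the first invariant subspace, $\Theta A_n(u)\Theta$ acts as $\rho v_n$ times the identity, producing the eigenvalue $\lambda = \rho v_n$ with multiplicity $d-1$. For $d=2$ this eigenspace is one-dimensional and spanned by $(\nperp,0)^T$; for $d=3$ it is two-dimensional and spanned by $(t_1,0)^T$ and $(t_2,0)^T$ for any orthonormal basis $\{t_1,t_2\}$ of $n^{\perp}$. On the second subspace the action reduces to a symmetric $2\times 2$ matrix whose characteristic polynomial is $\lambda^2 - \rho v_n\lambda - \theta^2 = 0$; its roots are exactly the $\lambdapm$ listed in \eqref{eq:Eigenvalues}. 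The eigenvector relation on this block reads $\theta\alpha = \lambdapm\gamma$, so the choice $(\alpha,\gamma)=(\lambdapm,\theta)$ gives precisely $\rightpm$ after normalization by the factor $(\theta^2+\lambdapm^2)^{-1/2}$.

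The final point is mutual orthogonality of $\rightp$ and $\rightm$, since orthogonality with the $\rho v_n$-eigenspace is immediate from $n\cdot\nperp = 0$ (respectively $n\cdot t_j = 0$). This orthogonality reduces to the identity $\lambdap\lambdam + \theta^2 = 0$, which is Vieta's formula for the $2\times 2$ characteristic polynomial and thus holds automatically. I do not anticipate any genuine obstacle; the main care is to keep the decomposition into $n$-parallel and $n$-orthogonal components clean and to recognize that $\rightp\perp\rightm$ is exactly the product-of-roots identity.
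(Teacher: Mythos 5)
Your proof is correct and is essentially the paper's argument in slightly more structural clothing: the paper manipulates the eigenvector equations directly and dots with $n$ to reach the same quadratic $\lambda^2-\rho v_n\lambda-\theta^2=0$, while you phrase this as a decomposition into the two orthogonal invariant subspaces $\{(w,0):w\cdot n=0\}$ and $\mathrm{span}\{(n,0)^T,(0,1)^T\}$. The computations, the eigenvalues, the eigenvectors, and the role of Vieta's formula $\lambdap\lambdam=-\theta^2$ all coincide with the paper's proof, so no further comment is needed.
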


\begin{lemma}\label{lemma:2}
Let $\psi=(\phi,\chi)$ and $\psi'=(\phi',\chi')$. Then
\begin{equation}\label{eq:Norm}
|A_n(u)|_{\Theta}\psi\cdot \psi'= \rho |v_n| \phi_{n}^{\perp}\cdot \phi_{n}^{'\perp} + \frac{\chi\chi'+2\theta^2\phi_n\phi'_n + (\chi+\rho v_n \phi_n)(\chi'+\rho v_n \phi'_n)}{\sqrt{4\theta^2+\rho^2 v_n^2}}
\end{equation}
and
\begin{equation}\label{eq:ThetaM_A_psi}
\begin{split}
\ThetaM{A_n(u)}\psi\cdot \psi' &=
 \rho v_n^- \phi_{n}^{\perp}\cdot \phi_{n}^{'\perp} - \frac{1}{\sqrt{4\theta^2+\rho^2 v_n^2}}(\chi+\lambdam \phi_n)(\chi'+\lambdam \phi'_n).
\end{split}
\end{equation}
One also has that 
\begin{equation*}
|A_n(u)|_{\Theta}\psi\cdot \psi\simeq \rho |v_n| \left(\phi_{n}^{\perp}\right)^2 + (\rho |v_n| + \theta)\phi_n^2 + \frac{\chi^2}{\rho |v_n| + \theta}.
\end{equation*}
\end{lemma}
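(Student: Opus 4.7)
My plan is to derive all three assertions directly from the spectral decomposition of Lemma~\ref{lemma:EulerEigen}, writing $\Theta A_n(u)\Theta = R\Lambda R^T$ with $\Lambda$ diagonal and $R$ orthogonal. By the very definition of $|A|_\Theta$ and $\ThetaM{A}$,
\begin{equation*}
|A_n(u)|_\Theta\psi\cdot\psi' = \sum_j |\lambda_j|\,\alpha_j(\psi)\,\alpha_j(\psi'),\qquad \ThetaM{A_n(u)}\psi\cdot\psi' = \sum_j \lambda_j^-\,\alpha_j(\psi)\,\alpha_j(\psi'),
\end{equation*}
where $\alpha_j(\psi):= r_j\cdot\Theta^{-1}\psi$ and $\Theta^{-1}\psi=(\phi,\chi/\theta)$. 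Reading off the eigenvectors from Lemma~\ref{lemma:EulerEigen}, one finds $\alpha(\psi)=n^\perp\!\cdot\phi$ for the transverse mode and $\alpha_{\pm}(\psi)=(\lambdapm\phi_n+\chi)/\sqrt{\theta^2+\lambdapm^2}$ for the two coupled modes; the 3D case simply adds a second tangential eigenvector with the same eigenvalue $\rho v_n$, so the two transverse contributions assemble into the vector inner product $\phi_n^\perp\cdot\phi_n^{'\perp}$.

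The next step is to simplify these sums by means of the Vieta identities $\lambdap+\lambdam=\rho v_n$, $\lambdap\lambdam=-\theta^2$, and $\lambdap-\lambdam=\sqrt{4\theta^2+\rho^2v_n^2}$. The key observation is the factorisation $\theta^2+\lambdapm^2 = \pm\lambdapm(\lambdap-\lambdam)$, which immediately yields $|\lambdapm|/(\theta^2+\lambdapm^2)=1/\sqrt{4\theta^2+\rho^2v_n^2}$. Adding the two coupled-mode contributions, expanding, and substituting $\lambdap^2+\lambdam^2 = \rho^2v_n^2+2\theta^2$ produces the bracketed numerator in (\ref{eq:Norm}) after a short calculation. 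For (\ref{eq:ThetaM_A_psi}) the same scheme applies with $\Lambda^-$ in place of $|\Lambda|$: the tangential mode contributes $\rho v_n^-\,\phi_n^\perp\cdot\phi_n^{'\perp}$, the positive eigenvalue $\lambdap$ is annihilated, and only the $\lambdam$-mode survives, giving the stated formula directly.

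For the equivalence I would set $\psi=\psi'$ in (\ref{eq:Norm}) and use $\sqrt{4\theta^2+\rho^2v_n^2}\simeq\theta+\rho|v_n|$, which reduces the claim to showing $Q(\chi,\phi_n):=\chi^2+2\theta^2\phi_n^2+(\chi+\rho v_n\phi_n)^2 \simeq (\theta^2+\rho^2v_n^2)\phi_n^2+\chi^2$. The upper bound is immediate from $|2\chi\rho v_n\phi_n|\le\chi^2+\rho^2v_n^2\phi_n^2$. The lower bound is the only mildly delicate point, and I would treat it by a case split on whether $|\chi|\ge\tfrac12\rho|v_n\phi_n|$: in the first case $\chi^2$ already dominates $\rho^2v_n^2\phi_n^2$ and the bound follows from $Q\ge\chi^2+2\theta^2\phi_n^2$, while in the second $(\chi+\rho v_n\phi_n)^2\ge\tfrac14\rho^2v_n^2\phi_n^2$ and $\chi^2$ is controlled by the same term. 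This case split absorbs the fact that $Q$ is not term-by-term equivalent to the target expression, since the cross term $2\chi\rho v_n\phi_n$ has indefinite sign.
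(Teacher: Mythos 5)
Your proposal is correct and follows essentially the same route as the paper's own proof: diagonalize $\Theta A_n(u)\Theta$ via Lemma~\ref{lemma:EulerEigen}, use the Vieta identities $\lambdap\lambdam=-\theta^2$, $\lambdap+\lambdam=\rho v_n$ to reduce the eigenvalue coefficients to $1/\sqrt{4\theta^2+\rho^2 v_n^2}$, and expand. The only cosmetic difference is in the final equivalence, where the paper simply invokes $\chi^2+(\chi+\rho v_n\phi_n)^2\simeq\chi^2+\rho^2v_n^2\phi_n^2$ (the standard fact $a^2+(a+b)^2\simeq a^2+b^2$) while you establish the same bound by a case split; both are valid.
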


Now, let us discuss the scaling of the Euler equations. Let $q=p/\rho$, $\varphi=f/\rho $  and
\begin{equation*}
\tilde x = s_x x,\quad\tilde t = s_t t,\quad\tilde v = s_v v,\quad \tilde q = s_q q,\quad \tilde \varphi = s_f \varphi.
\end{equation*}
Then, with $\mathcal{L}$ and $\mathcal{\tilde L}$ the differential operators in $\Omega$ and $\tilde\Omega=s_x\Omega$, we wish to have that
\begin{equation}\label{eq:Equival}
\mathcal{L}(v,q) = (\varphi, 0) \quad\Longleftrightarrow\quad \mathcal{\tilde L}(\tilde v,\tilde q) =(\tilde \varphi, 0) .
\end{equation}
Since the first component of $\mathcal{\tilde L}(\tilde v,\tilde q)$ is 
\begin{equation*}
\frac{\partial \tilde v}{\partial \tilde t} + \tilde v\cdot\tilde\nabla \tilde v + \tilde\nabla \tilde q = 
\frac{s_v^2}{s_x} \left( \frac{s_x}{s_ts_v}\frac{\partial v}{\partial t} + v\cdot\nabla v + \frac{s_q}{s_v^2}\nabla q  \right)
\end{equation*}
it follows that
\begin{equation*}
s_v = \frac{s_x}{s_t},\quad s_q = s_v^2,\quad s_f  = \frac{s_v^2}{s_x}.
\end{equation*}

Then, denoting now by $\mathcal{B}$ and $\mathcal{\tilde B}$ the differential operators associated to the boundary conditions on $\partial\Omega$ and $\partial\tilde\Omega$, we would like to choose $\theta$ such that the same property (\ref{eq:Equival}) holds true for  $\mathcal{B}$ and $\mathcal{\tilde B}$.
From the expression of the matrix $\ThetaM{A_n(u)}$ in Lemma~\ref{lemma:2}, it follows that
the characteristic boundary condition (\ref{eq:CharBdryCond}) translates into
\begin{equation*}
\lambdam(v-\vD)_n + (p- \pD)=0,\quad \rho v_n^-(v- \vD)_{n}^{\perp}=0.
\end{equation*}
Therefore, the scaling is balanced if $\tilde{\lambda}_{\rm m} = \lambdam s_v$, where 
$\tilde{\lambda}_{\rm m}=\left(\rho \tilde v_{n}-\sqrt{4\tilde{\theta}^2+\rho ^2\tilde {v}_{n}^2}\right)/2$ according to (\ref{eq:Eigenvalues}). 
This yields $\tilde\theta = s_v \theta$. Thus, $\theta$ is proportional to $ \rho|v_n|$ and to $ \sqrt{\rho |p|}$  at the continuous level. 

Now, taking into account time and space discretization with parameters $d_t$ and $d_K$, a natural extension is to require similar scaling for discrete solutions corresponding to $d_t$, $d_K$ and  $\tilde d_t = s_t d_t$, $\tilde d_K=s_x d_K$ respectively. An additional possibility  thus appears: $\theta$ proportional to $\rho d_K/d_t $.  In order to cover all choices, we take $\theta$ as a homogeneous function $\Psi$ of degree one: 
\begin{equation*}
\theta = \Psi \left (\rho|v_{h,n}|, \sqrt{\rho |p_h|},\frac{\rho d_K}{d_t}\right).
\end{equation*}

Although $\theta$ is defined locally on each cell $K$, we do not use a subscript for readability. Note that $\theta$ is not necessarily constant on $K$.

The obvious choice $\theta=1$ (such that $\Theta$ is the identity matrix) corresponds to the standard characteristic condition with $B=A_n^-$ but it does not yield a correct scaling. 

In what follows, we shall mostly use a general $\theta$. However, we will sometimes discuss the choice
\begin{equation}\label{eq:ChoiceTheta}
\theta^2= (\rho v_{h,n})^2 +  c_{\rm{dt}}^2\left(\frac{\rho d_K}{d_t}\right)^2.
\end{equation}

\begin{remark}\label{rmk:3}
For $\theta$ given in (\ref{eq:ChoiceTheta}), we obtain
\begin{equation}\label{eq:coerc}
|A_n(u)|_{\Theta}\psi\cdot \psi \simeq \rho |v_n| \phi^2 + \theta\phi_n^2 + \frac{\chi^2}{\theta}.
\end{equation}
We underline that the constants involved in the equivalence (\ref{eq:coerc}) are independent of all physical and numerical parameters, especially of 
$d_K$ and $d_t$ and are therefore valid for arbitrary CFL numbers and on locally refined meshes.
\end{remark}
%
%
\subsection{Weak formulation}\label{subsec:}
%

We now propose a weak space-discrete formulation for (\ref{eq:QL}) and (\ref{eq:CharBdryCond}). For the moment, we neglect the interior stabilization which will be discussed in Section \ref{subsec:WeakNavierStokes}. For this purpose, we introduce
\begin{equation}\label{eq:ql}
\begin{split}
\aql(u_h)(\psi_h) :=& \frac12 \sum_{i=1}^d\intOmega\left( A^i(u_h) \frac{\partial u_h}{\partial{x_i}}\cdot \psi_h - u_h\cdot A^i (u_h)\frac{\partial\psi_h}{\partial{x_i}} \right)
+ \frac12\int_{\partial\Omega}  |A_n(u_h)|_{\Theta} u_h \cdot\psi_h\\
\lql(u_h)(\psi_h) :=&\int_{\Omega} f\cdot\phi_h  -\int_{\partial\Omega}  \ThetaM{A_n (u_h)}\uD\cdot\psi_h\\
\end{split}
\end{equation}
and we consider the space-discrete problem: $u_h(t)\in \mathbb{V}_h\times\mathbb W_h $,
\begin{equation}\label{eq:QLDiscrete}
\rho\int_{\Omega}\frac{\partial v_h}{\partial t}\cdot\phi_h + \aql(u_h)(\psi_h) =\lql(u_h)(\psi_h)\quad\forall\psi_h=(\phi_h,\chi_h)\in \mathbb{V}_h\times\mathbb W_h.
\end{equation}

For a smooth solution $u=(v,p)$ to (\ref{eq:QL}) and (\ref{eq:CharBdryCond}), using integration by parts together with the property $\div v=0$, we have consistency:
%
\begin{equation}\label{eq:ConsistAql}
\rho\int_{\Omega}\frac{\partial v}{\partial t}\cdot\phi_h + \aql(u)(\psi_h) =\lql(u)(\psi_h)\quad\forall\psi_h\in \mathbb{V}_h\times\mathbb W_h.
\end{equation}

One immediately gets the energy balance of the formulation.
\begin{lemma}\label{lem:}(Energy estimate for $\aql$)
One has
\begin{equation*}
\aql(\psi_h)(\psi_h)= \frac12 \int_{\partial\Omega}|A_n(\psi_h)|_{\Theta} \psi_h \cdot \psi_h\geq 0, \quad \forall \psi_\in \mathbb{V}_h\times\mathbb W_h.
\end{equation*}
Clearly, any discrete solution $u_h$ to (\ref{eq:QLDiscrete}) satisfies
\begin{equation}\label{eq:QL_energy}
\frac{d}{d t}\int_{\Omega} \frac{\rho}{2} v_h^2+ \aql(u_h)(u_h)= 
\int_{\Omega} f\cdot v_h -\int_{\partial\Omega}\ThetaM{A_n(u_h)}\uD\cdot u_h.
\end{equation}
\end{lemma}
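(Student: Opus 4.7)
The plan is to exploit the pointwise symmetry of the flux Jacobians to kill the volume integral, and then to invoke the (already proved) structure of $|A_n(u)|_\Theta$ from Lemma~\ref{lemma:2} to get nonnegativity of the boundary term.

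First I would note that each of the matrices
\begin{equation*}
A^i(u) = \begin{bmatrix} \rho v_i I & J_i \\ J_i^T & 0 \end{bmatrix}
\end{equation*}
is symmetric, so for any vectors $w, w' \in \mathbb{R}^{d+1}$ one has $A^i(u_h) w \cdot w' = w \cdot A^i(u_h) w'$. Applying this pointwise with $w = \partial u_h / \partial x_i$ and $w' = \psi_h$, and then setting $\psi_h = u_h$, the two terms inside the sum in the definition (\ref{eq:ql}) of $\aql$ cancel exactly, so the entire volume contribution vanishes. What remains is precisely the boundary term
\begin{equation*}
\aql(\psi_h)(\psi_h) = \tfrac12 \int_{\partial\Omega} |A_n(\psi_h)|_\Theta \psi_h \cdot \psi_h.
\end{equation*}

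Next I would establish nonnegativity. The cleanest way is to observe that $|A|_\Theta = \Theta^{-1}|\Theta A \Theta|\Theta^{-1}$ is positive semidefinite by construction (the middle factor is PSD as the absolute value of a symmetric matrix, and conjugation by the invertible symmetric $\Theta^{-1}$ preserves this). Alternatively, one can just read off nonnegativity from the explicit expression in Lemma~\ref{lemma:2}, which gives $|A_n(\psi_h)|_\Theta \psi_h \cdot \psi_h$ as a sum of squares divided by positive quantities; I would include a one-line reference to this for concreteness.

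Finally, for the energy balance (\ref{eq:QL_energy}), I would simply choose $\psi_h = u_h$ as test function in the discrete formulation (\ref{eq:QLDiscrete}). The time derivative term becomes $\frac{d}{dt} \int_\Omega \frac{\rho}{2} v_h^2$ (noting that the pressure block of $M$ vanishes, so only the velocity component contributes), the $\aql(u_h)(u_h)$ term stays as is on the left-hand side, and the right-hand side is $\lql(u_h)(u_h) = \int_\Omega f\cdot v_h - \int_{\partial\Omega} \ThetaM{A_n(u_h)}\uD \cdot u_h$ directly from the definition of $\lql$. No step here is truly an obstacle: the only thing to be careful about is the symmetry argument that kills the interior terms, which is pointwise and does \emph{not} require integration by parts or any use of $\div v_h = 0$ — this is specific to the symmetric, nonconservative form used in~(\ref{eq:ql}).
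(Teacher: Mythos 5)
Your argument is correct and is exactly the computation the paper treats as immediate (it offers no written proof, only the remark that the energy balance follows at once): the pointwise symmetry of each $A^i$ cancels the antisymmetrized volume integral without integration by parts or use of $\div v_h=0$, positive semidefiniteness of $|A_n|_{\Theta}$ gives the sign, and testing (\ref{eq:QLDiscrete}) with $\psi_h=u_h$ yields (\ref{eq:QL_energy}). Nothing is missing.
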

Since $|A_n(\psi_h)|_{\Theta}$ is positive symmetric, (\ref{eq:QL_energy}) shows energy dissipation through the boundary for $f=0$ and $\uD=0$.


Next we rewrite $\aql$ in order to get close to the standard mixed formulation, by removing the derivative on the pressure variables.
\begin{lemma}\label{lem:RefAql}(Reformulation of $\aql$)
Let $u_h=(v_h,p_h)$ and $\psi_h=(\phi_h,\chi_h)$. The form $\aql$ can be written as
\begin{equation*}
\begin{split}
\aql(u_h)(\psi_h)&= \intOmega \frac{\rho}{2}\bigg(\left(v_h\cdot \nabla v_h\right)\cdot\phi_h - v_h \cdot \left(v_h\cdot\nabla\phi_h\right)\bigg) + \intOmega \left( \chi_h \div v_h  - p_h\div\phi_h  \right)\\ 
&+\int_{\partial\Omega} \left(   \frac{\rho}{2} v_{h,n} v_h\cdot\phi_h +p_h \phi_{h,n} - \ThetaM{A_n(u_h)}u_h\cdot\psi_h \right)
\end{split}
\end{equation*}
\end{lemma}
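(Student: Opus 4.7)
The plan is to expand the definition of $\aql$ using the explicit block structure of $A^i$ and $A_n$, integrate by parts the terms involving derivatives of the pressure variables $p_h$ and $\chi_h$, and finally use the decomposition identity (\ref{eq:A_theta}) to combine the resulting boundary contributions with the $\frac12|A_n|_\Theta$ term into the announced form.

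First I would unpack the block action. For $u_h=(v_h,p_h)$ and $\psi_h=(\phi_h,\chi_h)$, the definition of $A^i(u_h)$ and the relations $\sum_i J_i \partial_{x_i} p_h = \nabla p_h$, $\sum_i J_i^T \partial_{x_i} v_h = \div v_h$ give
\begin{equation*}
\sum_{i=1}^d A^i(u_h)\tfrac{\partial u_h}{\partial x_i}\cdot\psi_h = \rho(v_h\cdot\nabla v_h)\cdot\phi_h + \nabla p_h\cdot\phi_h + \chi_h\div v_h,
\end{equation*}
and analogously
\begin{equation*}
\sum_{i=1}^d u_h\cdot A^i(u_h)\tfrac{\partial\psi_h}{\partial x_i} = \rho(v_h\cdot\nabla\phi_h)\cdot v_h + v_h\cdot\nabla\chi_h + p_h\div\phi_h.
\end{equation*}
Subtracting and inserting the factor $\tfrac12$ yields the convective skew combination plus $\tfrac12(\nabla p_h\cdot\phi_h - v_h\cdot\nabla\chi_h) + \tfrac12(\chi_h\div v_h - p_h\div\phi_h)$ under the volume integral.

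Next I would integrate by parts the two pressure-gradient terms: $\int_\Omega\nabla p_h\cdot\phi_h = -\int_\Omega p_h\div\phi_h + \int_{\partial\Omega}p_h\phi_{h,n}$ and $\int_\Omega v_h\cdot\nabla\chi_h = -\int_\Omega \chi_h\div v_h + \int_{\partial\Omega}\chi_h v_{h,n}$. Adding these to the $\tfrac12(\chi_h\div v_h - p_h\div\phi_h)$ term promotes the coefficient to $1$ (matching the target mixed form) and leaves a boundary contribution $\tfrac12\int_{\partial\Omega}(p_h\phi_{h,n}-\chi_h v_{h,n})$.

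For the boundary term, I would apply the key identity (\ref{eq:A_theta}), i.e.\ $\tfrac12|A_n|_\Theta = \tfrac12 A_n - \ThetaM{A_n}$, so that
\begin{equation*}
\tfrac12\int_{\partial\Omega}|A_n(u_h)|_\Theta u_h\cdot\psi_h = \tfrac12\int_{\partial\Omega}A_n(u_h)u_h\cdot\psi_h - \int_{\partial\Omega}\ThetaM{A_n(u_h)}u_h\cdot\psi_h.
\end{equation*}
A direct computation with the block form of $A_n$ gives $A_n(u_h)u_h\cdot\psi_h = \rho v_{h,n}v_h\cdot\phi_h + p_h\phi_{h,n} + v_{h,n}\chi_h$. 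Combining this with the boundary remainder $\tfrac12(p_h\phi_{h,n}-\chi_h v_{h,n})$ from the integration by parts, the $\chi_h v_{h,n}$ contributions cancel and the $p_h\phi_{h,n}$ contributions add up to $p_h\phi_{h,n}$, producing exactly $\tfrac{\rho}{2}v_{h,n}v_h\cdot\phi_h + p_h\phi_{h,n} - \ThetaM{A_n(u_h)}u_h\cdot\psi_h$ on $\partial\Omega$, as claimed.

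The only delicate step is the bookkeeping of signs and factors of $\tfrac12$ when merging the three boundary contributions (the one from (\ref{eq:A_theta}), the one created by integration by parts, and the normal-flux trace of $A_n u_h$); everything else is routine block-matrix algebra and divergence theorem. No new ingredients beyond (\ref{eq:A_theta}) and the explicit forms of $A^i$ and $A_n$ are needed.
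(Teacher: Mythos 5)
Your proposal is correct and follows essentially the same route as the paper: the integration by parts of the pressure-gradient and $v_h\cdot\nabla\chi_h$ terms is exactly the identity the paper states as its opening "remark", and the combination of $\tfrac12|A_n|_\Theta=\tfrac12 A_n-\ThetaM{A_n}$ with $A_n(u_h)u_h\cdot\psi_h=\rho v_{h,n}v_h\cdot\phi_h+p_h\phi_{h,n}+\chi_h v_{h,n}$ is the paper's second step verbatim. You merely make the block-matrix unpacking of $\sum_i A^i\partial_{x_i}$ more explicit, which the paper leaves implicit.
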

\begin{proof}
We remark that
\begin{equation*}
\frac12 \int_{\Omega} \left( \nabla p_h\cdot\phi_h + \chi_h \div v_h  - p_h\div\phi_h- v_h\cdot\nabla\chi_h  \right) = 
 \int_{\Omega} \left( \chi_h \div v_h  - p_h\div\phi_h  \right) + \frac12\int_{\partial\Omega} \left( p_h  \phi_{h,n} - \chi_h v_{h,n}  \right). 
\end{equation*}
Thanks to (\ref{eq:A_theta}) and $A_n(u_h)u_h = ( \rho v_{h,n} v_h+p_h n , v_{h,n})$, it follows that
\begin{eqnarray*}
\int_{\partial\Omega}  \frac12 |A_n(u_h)|_{\Theta} u_h\cdot\psi_h 
&=&\int_{\partial\Omega}  \frac12 (\rho v_{h,n} v_h\cdot\phi_h +p_h \phi_{h,n} + \chi_h v_{h,n}) - \ThetaM{A_n(u_h)} u_h\cdot\psi_h
\end{eqnarray*}
and therefore
\begin{eqnarray*}
\int_{\partial\Omega} \frac12 |A_n(u_h)|_{\Theta} u_h\cdot\psi_h + \frac12\int_{\partial\Omega} ( p_h  \phi_{h,n} - \chi_h v_{h,n} )
=\int_{\partial\Omega}  \frac{\rho}{2} v_{h,n} v_h\cdot\phi_h +p_h  \phi_{h,n} - \ThetaM{A_n(u_h)} u_h\cdot\psi_h.
\end{eqnarray*}
\end{proof}

Thanks to the reformulation of $\aql$, we can write that 
\begin{equation}\label{eq:Reform_flux}
\begin{split}
\aql(u_h)(\psi_h) -\lql(u_h)(\psi_h)&=\intOmega \frac{\rho}{2}\bigg(\left(v_h\cdot \nabla v_h\right)\cdot\phi_h - v_h \cdot \left(v_h\cdot\nabla\phi_h\right)\bigg) \\
&+ \intOmega \left( \chi_h \div v_h  - p_h\div\phi_h  \right) -\int_{\Omega} f\cdot\phi_h + \int_{\partial\Omega}  \mathcal{F}( \uD, u_h,\psi_h)
\end{split}
\end{equation}
where the boundary contribution $ \mathcal{F}( \uD, u_h,\psi_h)$ is defined by:
\begin{equation}\label{ClassicalFluxReform}
\begin{split}
\mathcal{F}(\uD, u_h,\psi_h):&=\frac12 |A_n(u_h)|_{\Theta} u_h\cdot\psi_h+\ThetaM{A_n(u_h)}\uD\cdot \psi_h+\frac12 (p_h\phi_{h,n}-\chi_h v_{h,n})\\
&=\frac12 A_n(u_h) u_h\cdot\psi_h+\ThetaM{A_n(u_h)}(\uD-u_h)\cdot \psi_h+\frac12 (p_h\phi_{h,n}-\chi_h v_{h,n})\\
&=\frac{\rho}{2}v_{h,n} v_h\cdot\phi_h +p_h\phi_{h,n}+\ThetaM{A_n(u_h)}(\uD-u_h)\cdot \psi_h.
\end{split}
\end{equation}

%
%

\section{Euler equations with general boundary conditions}\label{sec:WeakEulerGen}
%

The relation (\ref{eq:Reform_flux}) is the basis for our definition of the weak form for the Euler equations endowed with the five types of boundary conditions described in Table~\ref{tab:BC}. For this purpose, we write the other boundary conditions in characteristic form, by replacing $\uD$ (given on $\GammaCharacteristic$) by some $\tilde u$, depending on the type of the boundary condition, on the available data as well as on the unknown itself.  Then we reformulate accordingly the boundary contribution $\mathcal{F}( \tilde u, u,\psi)$ in order to see which terms need to be added to the formulation, in order to obtain control over the kinetic energy. Next, in order to allow for re-entrant flows, the outflow condition is further modified. Finally, we propose a new formulation with boundary stabilization for the Euler equations. In order to focus on the treatment of boundary conditions, we do not discuss in this section the interior stabilization. We will come back to this topic in Subsection~\ref{subsec:Stab}.

%
\subsection{Reformulation of boundary conditions}\label{subsec:ReformEulerGen}
%
Let us note that all boundary conditions on $\partial\Omega$ can be written under the same form as on $\GammaCharacteristic$, that is 
\begin{equation}\label{eq:CLequivalent}
\ThetaM{A_n(u)}(\tilde u-u)=0
\end{equation}
with $\tilde u$ specific to each type of boundary conditions. On $\GammaCharacteristic$, we have $\tilde u=\uD$. 

In what follows, we choose  $\tilde u$ according to the available data on the remaining boundaries. 
%
\subsubsection{Wall and symmetry boundary condition}\label{subsubsec:}
%
We use the reflection of $u$, i.e.
\begin{equation*}
\tilde u  = u - (2v_n n, 0)^T.
\end{equation*}
Thanks to relation (\ref{eq:ThetaM_A_psi}) from Lemma \ref{lemma:2}, we have that
\begin{equation}\label{CLWall}
\ThetaM{A_n(u)} (\tilde u-u)\cdot \psi = -\frac{2\lambdam^2}{\theta^2+\lambdam^2}  v_n (\chi + \lambdam \phi_n).
\end{equation}

Hence, the wall condition $v_n=0$ is  equivalent to (\ref{eq:CLequivalent}).
%
\subsubsection{Inflow boundary condition}\label{subsubsec:}
%
Since the velocity is known, we choose $\tilde u = (v^D,p)^T$ such that $\tilde u - u = (v^D-v,0)^T$. Then
\begin{eqnarray*}
\ThetaM{A_n(u)}(\tilde u-u)\cdot \psi &=& \rho v_n^{-}(v^D-v)_{n}^{\perp}\cdot\phi_{n}^{\perp} +\frac{\lambdam^2}{\theta^2+\lambdam^2} (v^D-v)_n (\chi + \lambdam \phi_n)\\
&=&-\rho v_n^{-}(v-v^D)\cdot\phi  +\alpha (v-v^D)_n \phi_n-  \beta(v-v^D)_n \chi 
\end{eqnarray*}
where we have put for abbreviation
\begin{eqnarray}
\alpha(v) :&=& \rho v_n^{-}-\frac{\lambdam^3}{\theta^2+\lambdam^2}  = \frac{(\rho |v_n| - \sqrt{4\theta^2+\rho^2 v_n^2})^2}{4\sqrt{4\theta^2+\rho^2 v_n^2}}>0,\label{eq:alpha}\\
\beta (v) :&=& \frac{\lambdam^2}{\theta^2+\lambdam^2} = -\frac{\lambdam}{\sqrt{4\theta^2+\rho^2 v_n^2}}>0.\label{eq:beta}
\end{eqnarray}

The inflow condition $v=v^D $ implies  (\ref{eq:CLequivalent}); the equivalence holds if $v_n^{-}\neq 0$ on $\GammaIn$.

%
\subsubsection{Outflow boundary condition}\label{subsubsec:}
%
Since the pressure is known, we now choose $\tilde u = (v,p^D)^T$, such that $\tilde u - u = (0,p^D-p)^T$. Then
\begin{eqnarray*}
\ThetaM{A_n(u)}(\tilde u-u)\cdot \psi &=& \frac{\lambdam}{\theta^2+\lambdam^2} (p^D-p) (\chi + \lambdam \phi_n)\\
&=& \frac{1}{\sqrt{4\theta^2+\rho^2 v_n^2}} (p-p^D)\chi - \beta  (p-p^D)\phi_n
\end{eqnarray*}
with $\beta$ introduced in (\ref{eq:beta}). Again, the outflow condition $p=p^D$ is equivalent to  (\ref{eq:CLequivalent}).

%
\subsection{Reformulation of boundary contributions}\label{subsec:}
%
We now compute the term $ \mathcal F(\tilde u,u,\psi)$ introduced in (\ref{ClassicalFluxReform}) on each boundary.This allows us to see what stabilization terms are needed in order to obtain positivity of the form when $\psi=u$. Without loss of generality, we take here $u^D=0$. In what follows, $\alpha$ and $\beta$ are those introduced in (\ref{eq:alpha}) and (\ref{eq:beta}) respectively.
%
%
\subsubsection{Characteristic boundary condition}\label{subsubsec:}
%
%
%
Since $\tilde u=u^D=0$, obviously 
\begin{equation*}
\mathcal{F}(\tilde u, u,u)=\frac12 |A_n(u)|_{\Theta} u\cdot u
\end{equation*}
so $\mathcal{F}(\tilde u, u,u)$ is non-negative and no additional term is needed.
%
\subsubsection{Wall and symmetry boundary conditions}\label{subsubsec:}
%
%
It is useful to note first that
\begin{equation*}
\frac{\rho}{2} |v_n|+\alpha= \frac{2 \theta^2+ \rho^2 v_n^2 }{2 \sqrt{4\theta^2+\rho^2 v_n^2}},
\end{equation*}
which in view of Lemma \ref{lemma:2} yields that 
\begin{equation*}
\frac12|A_n(u)|_{\Theta} (v,0)\cdot (\phi,0)=\frac{\rho}{2} |v_n| v\cdot \phi+\alpha v_n\phi_n.
\end{equation*}
It follows that 
\begin{eqnarray}\label{CalculFlux}
\begin{split}
\frac{\rho}{2}v_n v\cdot\phi+p\phi_n &=\frac12|A_n(u)|_{\Theta} (v,0)\cdot(\phi,0)\\
&+(p\phi_n-\chi v_n)+\rho v_n^- v_n^{\perp}\cdot\phi_n^{\perp}+(\rho v_n^--\alpha)v_n\phi_n+\chi v_n.\\
\end{split}
\end{eqnarray}

So finally, we get thanks to (\ref{CLWall}) and to the relation $\rho v_n^--\alpha =\frac{\lambdam^3}{\theta^2+\lambdam^2}$ that
\begin{equation*}
\mathcal{F}(\tilde u, u,u)=\frac12 |A_n(u)|_{\Theta}(v,0)\cdot (v,0) -\frac{\lambdam^3}{\theta^2+\lambdam^2} v_n^2+\rho v_n^- \left(v_n^{\perp}\right)^2 +\left (1-\frac{2\lambdam^2}{\theta^2+\lambdam^2}\right) p v_n .
\end{equation*}

The terms $\rho v_n^- \left(v_n^{\perp}\right)^2$ and $\left(1-\frac{2\lambdam^2}{\theta^2+\lambdam^2}\right ) p v_n$ are not necessarily positive. Since they are both consistent with the boundary condition $v_n=0$, in order to control them we shall subtract the terms $\rho v_n^- v_n^{\perp}\cdot\phi_n^{\perp}$ and $\left(1-\frac{2\lambdam^2}{\theta^2+\lambdam^2}\right) \chi v_n$ from the weak formulation. The same approach is used for the other boundary conditions. 

%
\subsubsection{Inflow boundary condition}\label{subsubsec:}
%
%
We obtain thanks to (\ref{CalculFlux}) that
\begin{equation*}
\mathcal{F}(\tilde u, u,\psi)=\frac12 |A_n(u)|_{\Theta} (v,0)\cdot (\phi,0) +(p\phi_n-\chi v_n)+(1-\beta) \chi v_n, 
\end{equation*}
so $\mathcal{F}(\tilde u, u,u)=\frac12 |A_n(u)|_{\Theta} (v,0)\cdot (v,0) +(1-\beta) p v_n $. The term $(1-\beta) \chi v_n$ being of indefinite sign and consistent, it will be subtracted from the formulation.
%
\subsubsection{Outflow boundary condition}\label{subsubsec:}
%
%
We now get in view of the relation (\ref{eq:Norm}) that
\begin{equation*}
\mathcal{F}(\tilde u, u,\psi)=\frac12 |A_n(u)|_{\Theta} (0,p)\cdot (0,\chi)+\frac{\rho}{2} |v_n|v\cdot \phi +(1-\beta) p\phi_n +\rho v_n^-v\cdot \phi
\end{equation*}
which gives 
\begin{equation*}
\mathcal{F}(\tilde u, u,u)=\frac12 |A_n(u)|_{\Theta} (0,p)\cdot (0,p)+\frac{\rho}{2} |v_n|v^2+(1-\beta) p v_n +\rho v_n^-v^2.
\end{equation*}

The terms of indefinite sign are $(1-\beta) p \phi_n $ and $\rho v_n^-v\cdot \phi$.  Note that the latter is consistent only under the additional hypothesis $v_n^-=0$. 

\subsection{'Energy' boundary condition on the outflow}\label{subsec:ModifOutflow}
%
In order to avoid the hypothesis $v_n^-=0$ on $\GammaOut$, needed for  consistency, and to allow thus to treat re-entrant flows, we modify the boundary condition as follows:

\begin{equation}\label{eq:ModifOutflow}
\delta\rho v_n^- v+pn=p^Dn
\end{equation}
where $\delta$ is a numerical parameter to be determined later.
If $v_n^-=0$ we retrieve the initial outflow condition.  

We now choose $\tilde u = ((1-\displaystyle{\frac{\delta \rho v_n^-}{\lambda_m}})v_nn,p^D)^T$, such that 
\begin{equation*}
\tilde u - u = (-v_n^{\perp}-\frac{\delta \rho v_n^-}{\lambda_m}v_nn,p^D-p)^T.
\end{equation*}
Then thanks to Lemma \ref{lemma:2} we have
\begin{equation*}
\ThetaM{A_n(u)}(\tilde u-u)\cdot \psi =-\rho v_n^-v_{\nperp}\cdot\phi_{\nperp} -\frac{\lambdam}{\theta^2+\lambdam^2} (\delta\rho v_n^- v_n+p-p^D) (\lambdam \phi_n+\chi)
\end{equation*}
and the new outflow condition (\ref{eq:ModifOutflow}) is equivalent to  (\ref{eq:CLequivalent}) for any $\delta$.

We assume $p^D=0$ for the moment and write the boundary contribution with the help of (\ref{ClassicalFluxReform}) as
\begin{eqnarray*}
\mathcal{F}(\tilde u, u,\psi)&=&\frac{\rho}{2} |v_n|v_{n}^{\perp}\cdot \phi_{n}^{\perp} +\frac{1}{\sqrt{4\theta^2+\rho^2 v_n^2}}(\delta\rho v_n^-v_n+p)\chi + \frac{\rho}{2} v_n v_n \phi_n + p\phi_n\\
& & -\beta (\delta\rho v_n^-v_n+p)\phi_n, 
\end{eqnarray*}
where again $\beta$ is defined by (\ref{eq:beta}).
Putting
\begin{equation*}
Q(v, p,\delta)= \rho\left(\frac{1}{2} v_n^+ + (\frac{1}{2}-\delta)v_n^-\right) v_n ^2+\frac{1}{\theta}(\delta\rho v_n^- v_n+p)p
\end{equation*}
and using  
\begin{equation*}
\frac{1}{2} |v_n|=\frac{1}{2}v_n-v_n^- ,\quad v_n=v_n^++v_n^- ,
\end{equation*} 
yields
\begin{equation*}
\mathcal{F}(\tilde u, u,u)=\frac{\rho}{2} |v_n| \left (v_{n}^{\perp}\right)^2+Q(v, p,\delta) +(\delta\rho v_n^-v_n+p) 
\left( 
(1-\beta) v_n
+(4\theta^2+\rho^2 v_n^2)^{-1/2}p-\theta^{-1}p
\right)
.
\end{equation*}

The last term of $\mathcal{F}(\tilde u, u,u)$ is of indefinite sign but is consistent with the modified boundary condition, so it can be controlled as previously, by subtracting it from the weak formulation. Next, we choose $\delta$ in order to control the remaining terms. 

\begin{lemma}
Let $\delta=1$ and $\theta\geq \rho|v_n$. Then 
\begin{equation*}
Q(v, p,1)\simeq  \frac{p^2}{ \theta} + \rho  |v_n| v_n^2.
\end{equation*}
\end{lemma}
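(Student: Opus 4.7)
The plan is to substitute $\delta=1$ directly into the definition of $Q$, simplify the expression using elementary properties of the positive/negative parts, and then control a single indefinite cross term by a weighted Young inequality that exploits the hypothesis $\theta \geq \rho|v_n|$.

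First, I would substitute $\delta=1$:
\begin{equation*}
Q(v,p,1)=\rho\left(\tfrac12 v_n^+ -\tfrac12 v_n^-\right)v_n^2+\frac{1}{\theta}(\rho v_n^- v_n + p)p.
\end{equation*}
The key simplifications are $v_n^+ - v_n^- = |v_n|$ and, since $v_n^+ v_n^-=0$, $v_n^- v_n = v_n^-(v_n^++v_n^-) = (v_n^-)^2$. This yields the clean decomposition
\begin{equation*}
Q(v,p,1)=\tfrac{\rho}{2}|v_n|v_n^2+\frac{p^2}{\theta}+\frac{\rho(v_n^-)^2 p}{\theta}.
\end{equation*}
So the only term of indefinite sign is $R:=\rho(v_n^-)^2 p/\theta$, and the equivalence reduces to showing that $R$ can be absorbed into a fraction of the two positive terms.

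Next, I would establish the crucial size bound on the coefficient of $p$ in $R$. Since $|v_n^-|\leq|v_n|$ we have $(v_n^-)^4\leq v_n^4 = |v_n|\,v_n^2\cdot|v_n|$. Combined with the hypothesis $\rho|v_n|\leq\theta$, this gives
\begin{equation*}
\frac{\rho^2(v_n^-)^4}{\theta}\;\leq\;\frac{\rho|v_n|}{\theta}\,\rho|v_n|v_n^2\;\leq\;\rho|v_n|v_n^2.
\end{equation*}
Then I apply the weighted Young inequality $ab\leq \tfrac{\mu}{2}a^2+\tfrac{1}{2\mu}b^2$ with $a=\rho(v_n^-)^2/\sqrt\theta$ and $b=|p|/\sqrt\theta$, obtaining
\begin{equation*}
|R|\;\leq\;\frac{\mu}{2}\cdot\frac{\rho^2(v_n^-)^4}{\theta}+\frac{1}{2\mu}\cdot\frac{p^2}{\theta}\;\leq\;\frac{\mu}{2}\rho|v_n|v_n^2+\frac{1}{2\mu}\frac{p^2}{\theta}.
\end{equation*}

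Finally, choosing any $\mu\in(1/2,1)$, for instance $\mu=3/4$, leaves a positive remainder of each of the two target terms. This immediately yields both sides of the equivalence:
\begin{equation*}
\tfrac{1}{8}\rho|v_n|v_n^2+\tfrac{1}{3}\frac{p^2}{\theta}\;\leq\; Q(v,p,1)\;\leq\;\tfrac{7}{8}\rho|v_n|v_n^2+\tfrac{5}{3}\frac{p^2}{\theta},
\end{equation*}
which is precisely $Q(v,p,1)\simeq \rho|v_n|v_n^2 + p^2/\theta$. The only delicate point is choosing the Young weight $\mu$ strictly between $1/2$ and $1$ so that neither the coefficient absorbed from the first positive term nor that absorbed from the second exceeds its available budget; any $\mu$ outside this window would collapse one of the two lower bounds and fail to give the two-sided equivalence. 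The hypothesis $\theta\geq\rho|v_n|$ is used in exactly one place, namely the absorption bound for $\rho^2(v_n^-)^4/\theta$, and without it the cross term $R$ cannot be dominated by $\rho|v_n|v_n^2+p^2/\theta$ uniformly.
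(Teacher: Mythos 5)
Your proof is correct and follows essentially the same route as the paper's: both isolate the single indefinite cross term $\rho v_n^- v_n\, p/\theta$, control it by a weighted Young inequality, and use the hypothesis $\theta\ge\rho|v_n|$ (together with $|v_n^-|\le|v_n|$) to absorb the resulting quartic term into $\rho|v_n|v_n^2$, with the admissible Young weight lying in the same window $(1/2,1)$ as the paper's $\varepsilon$. The only difference is cosmetic: you write the upper bound out explicitly with constants, where the paper dismisses it as obvious.
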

\begin{proof}
Using   $v_n^+-v_n^-=|v_n|$, we have 
\begin{equation*}
Q(v, p,1)= \frac{2p^2+2p\rho v_n^- v_n+\theta\rho |v_n| v_n ^2}{2 \theta}.
\end{equation*}
Thanks to Young's inequality and to $(v_n^-)^2=- v_n^-|v_n|$, we can write for any $\varepsilon>0$ that
\begin{align*}
2p^2+2p\rho v_n^- v_n+\theta\rho |v_n| v_n^2\geq   (2-\frac{1}{\varepsilon})p^2+\rho |v_n|(\theta+\varepsilon \rho v_n^-) v_n^2
\ge (2-\frac{1}{\varepsilon})p^2+(1-\varepsilon)\theta\rho |v_n| v_n^2,
\end{align*}
using $\theta\geq \rho|v_n|$ as well as $|v_n|+v_n^-\geq 0$. By choosing $\frac{1}{2}<\varepsilon<1$ we get the lower bound. The other inequality is obvious, so the announced equivalence holds.
\end{proof}
%
%
\begin{remark}\label{rmk:EnergyEuler}
For $\theta$ given in (\ref{eq:ChoiceTheta}) it immediately follows that the outflow terms are controlled by $ p^2/\theta+\rho  |v_n| v^2$, which is not the complete energy norm $\frac12 |A_n(u)|_{\Theta} u\cdot u$, since the term $\theta v_n^2$ is missing, see Remark~\ref{rmk:3}. However, our approach allows to control the pressure and normal velocities, which is not the case in \cite{BruneauFabrie96,BraackMucha13,BazilevsMichlerCalo10a}.
\end{remark}
From now on, we take $\delta=1$, for which we get (for an arbitrary $p^D$):
\begin{equation*}
\mathcal{F}(\tilde u, u,\psi)-(\rho v_n^-v_n+p-p^D)
\left( 
(1-\beta) \phi_n
+(4\theta^2+\rho^2 v_n^2)^{-\frac12}\chi-\theta^{-1}\chi
\right)
=\frac{\rho}{2} |v_n| v\cdot \phi +\frac{1}{\theta}(\rho v_n^-v_n+p-p^D)\chi +p^D\phi_n.
\end{equation*}
%
%

\subsection{Weak formulation}\label{subsec:StabForm}

Taking into account the previously developed stabilization terms as well as the choice of $\theta$ (\ref{eq:ChoiceTheta}) we now define
\begin{equation*}
\begin{split}
\aEuler(u_h)(\psi_h):=& \intOmega \frac{\rho}{2}\bigg(\left(v_h\cdot \nabla v_h\right)\cdot\phi_h - v_h \cdot \left(v_h\cdot\nabla\phi_h\right)\bigg) + \intOmega \left( \chi_h \div v_h  - p_h\div\phi_h  \right)\\ 
&+\int_{\GammaCharacteristic} \left(   \frac{\rho}{2} v_{h,n} v_h\cdot\phi_h +p _h \phi_{h,n} - \ThetaM{A_n(u_h)}u_h\cdot\psi_h \right)\\
&+\int_{\GammaWall\cup\GammaSym\cup\GammaIn} \left( \frac{\rho}{2}|v_{h,n}|v_h\cdot\phi_h +\alpha v_{h,n}\phi_{h,n} -\chi_h v_{h,n}  +p_h  \phi_{h,n} \right)\\
&+\int_{\GammaOut}\bigg( \frac{\rho}{2} |v_{h,n}| v_h\cdot\phi_h   +  \frac{1}{\theta} (\rho v_{h,n}^-v_{h,n}+p_h)\chi_h \bigg),\\
\end{split}
\end{equation*}
\begin{equation*}
\begin{split}
\lEuler(u_h)(\psi_h):=&\int_{\Omega} f\cdot \phi_h -\int_{\GammaCharacteristic}  \ThetaM{A_n(u_h)}\uD\cdot\psi_h +\int_{\GammaIn}\left( -\rho  v_{h,n}^{-}\vD\cdot\phi_h +\alpha \vD_n\phi_{h,n} -  \chi_h  \vD_n \right)\\
&+\int_{\GammaOut}\bigg( \frac{1}{\theta} \pD\chi_h -  \pD\phi_{h,n}\bigg).
\end{split}
\end{equation*}

\begin{remark}
 We have subtracted the positive (and consistent) term $-\frac{\lambdam^3}{\theta^2+\lambdam^2} v_{h,n}\phi_{h,n} $ on $\GammaWall\cup\GammaSym$, in order to get the same boundary term as on $\GammaIn$ (provided that $v^D=0$). 
\end{remark}

We consider the space-discrete problem: $u_h(t)\in \mathbb{V}_h\times\mathbb W_h $,
\begin{equation}\label{eq:EulerDiscrete}
\rho\int_{\Omega}\frac{\partial v_h}{\partial t}\cdot\phi_h + \aEuler(u_h)(\psi_h) =\lEuler(u_h)(\psi_h)\quad\forall\psi_h=(\phi_h,\chi_h)\in \mathbb{V}_h\times\mathbb W_h.
\end{equation}

\begin{lemma}\label{lem:}(Consistency of $\aEuler$)
Let $u=(v,p)$ be a smooth solution to (\ref{eq:NavierStokesIncompressible}) with $\mu=0$ and with the boundary conditions given in Table~\ref{tab:BC}, except that the outflow condition is replaced by (\ref{eq:ModifOutflow}). Then $u$  satisfies
\begin{equation}\label{eq:EulerWeak}
\rho \int_{\Omega}\frac{\partial v}{\partial t}\cdot\phi_h + \aEuler(u)(\psi_h) =\lEuler(u)(\psi_h)\quad\forall\psi_h=(\phi_h,\chi_h)\in \mathbb{V}_h\times\mathbb W_h.
\end{equation}
\end{lemma}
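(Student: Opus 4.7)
The plan is to start from the strong form of the Euler equations, test with $\psi_h = (\phi_h,\chi_h)$, integrate by parts so as to produce exactly the interior part of $\aEuler$ together with the unreduced boundary integral $\int_{\partial\Omega}\bigl(\frac{\rho}{2} v_n v\cdot\phi_h + p\phi_{h,n}\bigr)$, and finally match that boundary integral piece-by-piece with the boundary terms in $\aEuler - \lEuler$ by using, on each part of $\partial\Omega$, the physical boundary condition rewritten in the characteristic form (\ref{eq:CLequivalent}).

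For the interior step, I multiply the momentum equation by $\phi_h$ and the incompressibility constraint $\div v = 0$ by $\chi_h$, then integrate by parts the pressure term to produce $-p\,\div\phi_h + \int_{\partial\Omega} p\phi_{h,n}$ and split the convective term via $v\cdot\nabla v = \tfrac{1}{2}(v\cdot\nabla v + \div(v\otimes v))$, keeping one half and integrating the other by parts to obtain $-\frac{\rho}{2}v\cdot(v\cdot\nabla\phi_h) + \frac{\rho}{2}\int_{\partial\Omega}v_n v\cdot\phi_h$. This reproduces exactly the interior part of $\aEuler(u)(\psi_h) - \lEuler(u)(\psi_h)$ (the $\chi_h\div v$ term vanishing on the exact solution), leaving only $\int_{\partial\Omega}(\frac{\rho}{2}v_n v\cdot\phi_h + p\phi_{h,n})$ to be identified.

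On each boundary piece the last line of (\ref{ClassicalFluxReform}) gives the identity
\begin{equation*}
\tfrac{\rho}{2}v_n v\cdot\phi_h + p\phi_{h,n} \;=\; \mathcal F(\tilde u, u, \psi_h) \;-\; \ThetaM{A_n(u)}(\tilde u - u)\cdot\psi_h,
\end{equation*}
which lets me insert the $\tilde u$ prescribed in Sections~\ref{subsec:ReformEulerGen} and~\ref{subsec:ModifOutflow}; on the exact solution the last term vanishes by (\ref{eq:CLequivalent}). The explicit expressions for $\mathcal F(\tilde u, u, \psi_h)$ computed there for each boundary type then coincide with the boundary integrands of $\aEuler - \lEuler$ on $\GammaCharacteristic$, $\GammaWall\cup\GammaSym$ and $\GammaIn$: the indefinite-sign terms that were subtracted in building $\aEuler$ all vanish on the exact solution thanks to $v_n = 0$ on $\GammaWall\cup\GammaSym$ and $v = \vD$ on $\GammaIn$ (in the latter case the Dirichlet data also appears with the matching coefficient in $\lEuler$).

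The delicate piece is $\GammaOut$. The formula for $\mathcal F(\tilde u, u, \psi_h)$ at the end of Section~\ref{subsec:ModifOutflow} contains a factor $\rho v_n^- v_n + p - \pD$ that vanishes exactly on the solution by the normal component of the modified condition (\ref{eq:ModifOutflow}); moreover, the mismatch between $\frac{\rho}{2}|v_n|v\cdot\phi_h$ appearing in $\aEuler|_{\GammaOut}$ and the combination $\frac{\rho}{2}v_n v\cdot\phi_h - \rho v_n^- v_n\phi_{h,n}$ produced from the strong form together with $p = \pD - \rho v_n^- v_n$ reduces, after using $|v_n| - v_n = -2 v_n^-$, to the single term $\rho v_n^- v_n^\perp\cdot\phi_h$, which vanishes by the tangential component $\rho v_n^- v_n^\perp = 0$ of (\ref{eq:ModifOutflow}). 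Both vector components of the modified outflow condition are thus essential; once they are used, summing the five boundary contributions with the interior identity yields (\ref{eq:EulerWeak}).
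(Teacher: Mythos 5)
Your argument is correct and follows essentially the same route as the paper: the paper invokes the already-established consistency of $\aql$ (\ref{eq:ConsistAql}) together with the flux reformulation (\ref{eq:Reform_flux})--(\ref{ClassicalFluxReform}) and then checks that the subtracted indefinite-sign terms (collected in (\ref{eq:StabTerm})) vanish on the exact solution, which is exactly your piece-by-piece matching of $\mathcal F(\tilde u,u,\psi_h)$ against the boundary integrands of $\aEuler-\lEuler$, including the use of both the normal and tangential components of (\ref{eq:ModifOutflow}) on $\GammaOut$. The only cosmetic difference is that you re-derive the integration-by-parts identity from the strong form rather than citing Lemma~\ref{lem:RefAql}.
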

\begin{proof}
Since $u$ satisfies the characteristic boundary condition (\ref{eq:CLequivalent}) on $\partial \Omega$ with  $\tilde u$ defined in Subsection~\ref{subsec:ReformEulerGen}, it follows from (\ref{eq:ConsistAql}) that 
\begin{equation*}
\rho \int_{\Omega}\frac{\partial v}{\partial t}\cdot\phi_h + \aql(u)(\psi_h) =\lql(u)(\psi_h)\quad\forall\psi_h\in \mathbb{V}_h\times\mathbb W_h.
\end{equation*}
So we only have to check that
\begin{equation}\label{eq:EulerWeak2}
(\aEuler- \aql)(u)(\psi_h) -(\lEuler-\lql)(u)(\psi_h)=0\quad\forall\psi_h\in \mathbb{V}_h\times\mathbb W_h.
\end{equation}

Next, we recall from Lemma \ref{lem:RefAql} that
\begin{eqnarray*}
\begin{split}
(\aql -\lql)(u)(\psi_h) &=\intOmega \frac{\rho}{2}\bigg((v\cdot \nabla v)\cdot\phi_h - v \cdot (v\cdot\nabla\phi_h)\bigg) +\intOmega(  \chi_h \div v  - p\div\phi_h )\\
& +\int_{\partial \Omega} \mathcal{F}(\tilde u, u,\psi_h)-\int_{\Omega} f\cdot \phi_h
\end{split}
\end{eqnarray*}
so using the different expressions of $ \mathcal{F}(\tilde u, u,\psi_h)$ and the definitions of $\aEuler$ and $\lEuler$, (\ref{eq:EulerWeak2}) is equivalent to
\begin{eqnarray}\label{eq:StabTerm}
\begin{split}
&-\int_{\GammaWall\cup\GammaSym} \bigg( \rho v_n^- v_{n}^{\perp}\cdot\left(\phi_{h}\right)_{n}^{\perp}+(1-\frac{2\lambdam^2}{\theta^2+\lambdam^2}) \chi_h v_n-\frac{\lambdam^3}{\theta^2+\lambdam^2} v_n\phi_{h,n}\bigg)\\
&-\int_{\GammaIn}  (1-\beta) \chi_h (v-v^D)_n -\int_{\GammaOut}(\rho v_n^-v_n+p-p^D) \left( (1-\beta) \phi_{h,n}
+\left(\frac{1}{\sqrt{4\theta^2+\rho^2 v_{n}^2}}-\frac{1}{\theta}\right)\chi_h\right) =0.
\end{split}
\end{eqnarray}
This equality holds true due to the considered boundary conditions. It translates the consistency of the stabilization terms.
\end{proof}

\begin{lemma}\label{lem:}(Energy estimate for $\aEuler$)
For all $\psi_h=(\phi_h,\chi_h)\in \mathbb{V}_h\times\mathbb W_h$, one has
\begin{equation}\label{eq:Euler_positivity}
\begin{split}
\aEuler(\psi_h)(\psi_h)&= \frac12 \int_{\GammaCharacteristic}|A_n(\psi_h)|_{\Theta} \psi_h\cdot \psi_h  +\frac12 \int_{\GammaWall\cup\GammaSym\cup\GammaIn}   |A_n(\psi_h)|_{\Theta} (\phi_h,0)\cdot (\phi_h,0)  \\ 
&+ \int_{\GammaOut}   \left(\frac{\rho}{2}|\phi_{h,n}|\left(\phi_h\right)_{n}^{\perp}\cdot \left(\phi_h\right)_{n}^{\perp}+Q(\phi_h,\chi_h,1)\right).
\end{split}
\end{equation}
For vanishing data $f$, $\vD$, $\pD$, any discrete solution $u_h$ to (\ref{eq:EulerDiscrete}) clearly satisfies
\begin{equation*}\label{eq:Euler_energy}
\frac{d}{d t}\int_{\Omega} \frac{\rho}{2} v_h^2=- \aEuler(u_h)(u_h)\leq 0.
\end{equation*}
\end{lemma}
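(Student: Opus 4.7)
The plan is to substitute $\psi_h = u_h = (\phi_h,\chi_h)$ directly into $\aEuler(u_h)(\psi_h)$ and to identify each boundary piece with the corresponding term on the right-hand side, using the identities assembled in Section~\ref{sec:WeakEulerChar} and Subsection~\ref{subsec:ModifOutflow}. No new algebra is required; everything reduces to rearrangements that have already appeared.

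First I would handle the volume contribution. Setting $v_h = \phi_h$, the antisymmetric convective pairing $(\phi_h\cdot\nabla\phi_h)\cdot\phi_h - \phi_h\cdot(\phi_h\cdot\nabla\phi_h)$ vanishes pointwise, and the mixed terms $\chi_h\div\phi_h - \chi_h\div\phi_h$ cancel. Hence the entire interior integrand is zero and only boundary contributions remain. I would then treat each boundary piece separately.

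On $\GammaCharacteristic$, with $\psi_h = u_h$ one has $A_n(\psi_h)\psi_h\cdot\psi_h = \rho\phi_{h,n}|\phi_h|^2 + 2\chi_h\phi_{h,n}$, so the integrand $\tfrac{\rho}{2}\phi_{h,n}|\phi_h|^2 + \chi_h\phi_{h,n} - \ThetaM{A_n(\psi_h)}\psi_h\cdot\psi_h$ equals $\tfrac12 A_n(\psi_h)\psi_h\cdot\psi_h - \ThetaM{A_n(\psi_h)}\psi_h\cdot\psi_h$, which is exactly $\tfrac12|A_n(\psi_h)|_\Theta\psi_h\cdot\psi_h$ by the key identity (\ref{eq:A_theta}). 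On $\GammaWall\cup\GammaSym\cup\GammaIn$ the terms $-\chi_h v_{h,n}$ and $p_h\phi_{h,n}$ cancel when $\psi_h = u_h$, leaving $\tfrac{\rho}{2}|\phi_{h,n}|\,|\phi_h|^2 + \alpha\phi_{h,n}^2$, which is precisely $\tfrac12|A_n(\psi_h)|_\Theta(\phi_h,0)\cdot(\phi_h,0)$ according to the formula derived just before (\ref{CalculFlux}).

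The outflow piece is the only one requiring a small manipulation. With $\psi_h = u_h$, the integrand reads $\tfrac{\rho}{2}|\phi_{h,n}|\,|\phi_h|^2 + \tfrac{1}{\theta}(\rho\phi_{h,n}^-\phi_{h,n} + \chi_h)\chi_h$. I would split $|\phi_h|^2 = \phi_{h,n}^2 + |(\phi_h)_n^\perp|^2$ and use $\tfrac12|\phi_{h,n}| = \tfrac12\phi_{h,n}^+ - \tfrac12\phi_{h,n}^-$ to recognize
\begin{equation*}
\tfrac{\rho}{2}|\phi_{h,n}|\phi_{h,n}^2 + \tfrac{1}{\theta}(\rho\phi_{h,n}^-\phi_{h,n}+\chi_h)\chi_h = Q(\phi_h,\chi_h,1),
\end{equation*}
so the residual tangential contribution is $\tfrac{\rho}{2}|\phi_{h,n}|(\phi_h)_n^\perp\cdot(\phi_h)_n^\perp$, as claimed. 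Summing the three boundary contributions yields the stated identity; the final inequality on $d_t\int_\Omega\tfrac{\rho}{2}v_h^2$ follows by choosing $\psi_h = u_h$ in (\ref{eq:EulerDiscrete}) with vanishing data, since each of the three boundary quantities is manifestly nonnegative ($|A_n|_\Theta$ is positive symmetric and $Q(\cdot,\cdot,1)\ge 0$ by the preceding lemma). The only mild obstacle is keeping track of the sign conventions on $\phi_{h,n}^\pm$ when reconstructing $Q$, but this is purely bookkeeping.
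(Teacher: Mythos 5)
Your proof is correct and follows essentially the same route as the paper: the paper's one-line proof simply invokes the expressions of $\mathcal{F}(\tilde{\psi}_h,\psi_h,\psi_h)$ computed boundary-by-boundary in Section~\ref{sec:WeakEulerGen}, and your direct substitution into the definition of $\aEuler$ reproduces exactly those computations, using the same key identities (\ref{eq:A_theta}), the formula $\frac12|A_n|_{\Theta}(v,0)\cdot(\phi,0)=\frac{\rho}{2}|v_n|v\cdot\phi+\alpha v_n\phi_n$, and the definition of $Q$ with $\delta=1$. The only point worth making explicit is that the nonnegativity of $Q(\cdot,\cdot,1)$ rests on the hypothesis $\theta\geq\rho|v_n|$ of the preceding lemma, which is guaranteed by the choice (\ref{eq:ChoiceTheta}) assumed in the definition of $\aEuler$.
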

\begin{proof}
We use the corresponding expressions of $\mathcal{F}(\tilde {\psi}_h, \psi_h, \psi_h)$ on each boundary.
\end{proof}

\section{Navier-Stokes equations with general boundary conditions}\label{subsec:WeakNavierStokes}


\subsection{Weak formulation of the Stokes equations}\label{subsec:WeakStokes}
%
Here, we consider the Stokes equations, endowed with wall, inflow, outflow and symmetry boundary conditions. We first define the bilinear and linear forms corresponding to the viscous term with weak boundary conditions in the sense of Nitsche:
\begin{equation}\label{eq:NitscheVisc}
\begin{split}
\aVisc(u_h,\psi_h) :=& \intOmega\nabla v_h:\nabla\phi_h - \int_{\GammaWall\cup\GammaIn} \left(  \frac{\partial v_h}{\partial n}\cdot\phi_h + v_h\cdot\left (\frac{\partial \phi_h}{\partial n} - \frac{\gamma}{d_K}\phi_h\right )  \right)\\
&-\int_{\GammaSym} \left(  \frac{\partial v_h}{\partial n}\cdot n\phi_{h,n} + v_{h,n} \left (\frac{\partial \phi_h}{\partial n}\cdot n - \frac{\gamma}{d_K}\phi_{h,n}\right )  \right)\\
&-\int_{\GammaOut}\frac{1}{\theta}\left( \frac{\partial v_h}{\partial n}\cdot n \,\chi_h + \frac{\partial \phi_h}{\partial n}\cdot n \,p_h - \mu\frac{\partial v_h}{\partial n}\cdot n\,\frac{\partial \phi_h}{\partial n}\cdot n\right),
\\
\lVisc (\phi_h)&:=- \int_{\GammaIn} \vD\cdot\left (\frac{\partial \phi_h}{\partial n} - \frac{\gamma}{d_K}\phi_h\right ) 
-\int_{\GammaOut}\frac{1}{\theta}\frac{\partial \phi_h}{\partial n}\cdot n \,\pD,
\end{split}
\end{equation}
where $\gamma$ is a stabilization parameter; the terms on wall, inflow, and symmetry boundaries are standard, see for instance \cite{Becker02,BurmanFernandezHansbo06,BazilevsHughes07}, whereas the outflow is treated in order to fit with the previous formulation for the inviscid case. 

%
%

Then the space-discrete Stokes problem reads: $u_h(t)\in \mathbb{V}_h\times\mathbb W_h$,
\begin{equation}\label{eq:StokesDiscrete}
\rho\int_{\Omega}\frac{\partial v_h}{\partial t}\cdot\phi_h + \aStokes(u_h,\psi_h) = \lStokes (\psi_h) \quad\forall\psi_h\in \mathbb{V}_h\times\mathbb W_h
\end{equation}
where
\begin{equation*}
\begin{split}
\aStokes(u_h,\psi_h) &:= \mu \aVisc(u_h,\psi_h) + \intOmega \left( \chi_h \div v_h- p_h \div\phi_h \right)+ \int_{\GammaWall\cup\GammaIn\cup\GammaSym} \left( p_h \phi_{h,n} - \chi_h v_{h,n} \right) + \int_{\GammaOut}\frac{1}{\theta} p_h\chi_h\\
\lStokes (\psi_h)&:=\mu \lVisc(\phi_h) +\intOmega f\cdot\phi_h  - \int_{\GammaIn} \chi_h\vD_n -\int_{\GammaOut} \pD \phi_{h,n}+ \int_{\GammaOut}\frac{1}{\theta} \pD\chi_h.\\
\end{split}
\end{equation*}
The well-posedness of this discrete Stokes problem for $\gamma$ sufficiently large follows from standard arguments.

\begin{remark}
Here we have not considered the characteristic condition $(\mu \displaystyle{\frac{\partial v}{\partial n} },0)^T-\ThetaM{A_n(u)}(u-\uD)=0$  since it is not natural for the Stokes equations. For the generalization to the Navier-Stokes formulation, this condition leads to the following additional  terms to $\aStokes$ and $\lStokes$ respectively:
\begin{equation*}
\int_{\GammaCharacteristic} \left(  p _h \phi_{h,n} - \ThetaM{A_n(u_h)}u_h\cdot\psi_h \right),\quad -\int_{\GammaCharacteristic}  \ThetaM{A_n(u_h)}\uD\cdot\psi_h. 
\end{equation*}
\end{remark}

It is obvious that the formulation (\ref{eq:StokesDiscrete}) is consistent in the sense that a sufficiently smooth solution $u=(v,p)$ to the Stokes equations satisfies the discrete equations
\begin{equation*}
\rho\int_{\Omega}\frac{\partial v}{\partial t}\cdot\phi_h + \aStokes(u,\psi_h) = \lStokes (\psi_h) \quad\forall\psi_h=(\phi_h,\chi_h)\in \mathbb{V}_h\times\mathbb W_h.
\end{equation*}
Depending on the employed discrete spaces, one may need to introduce additional stabilization terms; we discuss it in subsection~\ref{subsec:Stab}.

%

\subsection{Weak formulation of the Navier-Stokes equations}
The scaling of the Euler equations has been discussed in Subsection \ref{subsec:BalancedBC}. We are now dealing with the Navier-Stokes equations, so we also have to consider the viscous term.  Since
\begin{equation*}
 \tilde{ \Delta}\tilde{ v}=\frac{s_v}{s_x^2}\Delta v,
 \end{equation*}
it follows according to (\ref{eq:Equival}) that $s_v s_x=1$. Together with the previous relation $s_v = s_x/s_t $, this leads to the well-known unique scaling of the Navier-Stokes equations which satisfies (\ref{eq:Equival}), 
\begin{equation*}
s_t=s_x^2,\quad s_v = \frac{1}{s_x},\quad s_q =\frac{1}{s_x^2},\quad s_f  = \frac{1}{s_x^3}.
\end{equation*}
Recalling that $\tilde{\theta}=s_v\theta$, we deduce that $\theta$ may now also depend on $\mu/d_K $.  

We now take into account both the viscid and the inviscid parts and define:
\begin{equation*}
\begin{split}
\aNavierStokes(u_h)(\psi_h):=&\aEuler(u_h)(\psi_h)+\mu \aVisc(u_h,\psi_h),\\
\lNavierStokes(u_h)(\psi_h):=&\lEuler(u_h)(\psi_h)+\mu \lVisc (\phi_h).
\end{split}
\end{equation*}



In order to take into account stabilization, we define
\begin{equation*}
\aNavierStokesSUPG(u_h)(\psi_h) := \aNavierStokes(u_h)(\psi_h) + \astab(u_h)(\psi_h),\quad
\lNavierStokesSUPG(u_h)(\psi_h) := \lNavierStokes(u_h)(\psi_h) + \lstab(u_h)(\psi_h)
\end{equation*}
with the forms $\astab$ and $\lstab $  to be defined in Subsection~\ref{subsec:Stab}.

The space-discrete problem then reads: $u_h(t)\in \mathbb{V}_h\times\mathbb W_h $,
\begin{equation}\label{eq:NavierStokesDiscrete}
\rho\int_{\Omega}\frac{\partial v_h}{\partial t}\cdot\phi_h +\aNavierStokesSUPG(u_h)(\psi_h) =\lNavierStokesSUPG(u_h)(\psi_h)\quad\forall\psi_h=(\phi_h,\chi_h)\in \mathbb{V}_h\times\mathbb W_h.
\end{equation}

\subsection{Balanced SUPG stabilization}\label{subsec:Stab}
%
Following the idea of SUPG \cite{BrooksHughes82, HughesFrancaMallet86}, we consider the following stabilization for the Navier-Stokes equations:
\begin{eqnarray*}
\begin{split}
\astab(u_h)(\psi_h) &:= \sum_{K\in\Cells_h}  \left( \int_K \gamma_{K,1} E^{u_h}(u_h)\cdot E^{u_h}(\psi_h) +  \int_K \gamma_{K,2} \div v_h \div\phi_h\right),\\
\lstab(u_h)(\psi_h) &:= \sum_{K\in\Cells_h}  \int_K \gamma_{K,1} f\cdot E^{u_h}(\psi_h)
\end{split}
\end{eqnarray*}
with the parameters $\gamma_{K,i}$ to be specified and with
\begin{equation*}
E^{u_h}(\psi_h):=\rho\frac{\partial \phi_h}{\partial t}+\rho v_h\cdot\nabla \phi_h-\mu \Delta \phi_h+\nabla \chi_h.
\end{equation*}

Clearly, $\astab(u_h)(u_h)\geq 0 $ and $\astab(u)(\psi_h)=\lstab(u)(\psi_h)$ for $u$ sufficiently smooth solution of the Navier-Stokes system.

In \cite{BeckerCapatinaLuceTrujillo14d}, we have discussed how to tune the stabilisation parameters in order to get robustness with respect to the (local) P\'eclet number
$\Pe=\rho |v_h |d_K/\mu$. For large P\'eclet numbers, the flow is governed by the Euler part of the equations, whereas for small P\'eclet numbers, the Stokes part becomes dominant. 
The parameters $\gamma_{K,i} $  depend on the (local) parameter $\theta$ such that the stabilisation has the same scaling as the equations. Following \cite{BeckerCapatinaLuceTrujillo14d}, we take next on each cell $K$
\begin{equation}\label{eq:ChoiceThetaFinal}
\theta^2:=(\rho v_h)^2 +c_{\rm{dt}}^2(\frac{\rho d_K}{d_t})^2+c_{\rm{St}}^2(\frac{\mu}{d_K})^2
\end{equation}
with the constants $c_{\rm{dt}}$, $c_{\rm{St}}$  satisfying $c_{\rm{dt}}^2+c_{\rm{St}}^2>0$. The stabilization parameters are taken as follows:
\begin{equation*}
\gamma_{K,1}=\gamma_1 \frac{d_K}{\theta},\quad \gamma_{K,2}=\gamma_2\, d_K \theta,\quad \gamma_1,\,\gamma_2>0.
\end{equation*}
Note that $\theta$ is not necessarily constant on $K$. This cell-wise definition induced by the SUPG stabilisation is compatible with the one previously introduced in (\ref{eq:ChoiceTheta}) for the boundary conditions in the case $\mu=0$. 

Also note that we thus recover formulas similar to those proposed in the SUPG literature (see for instance \cite{FrancaFrey92}). One retrieves the stabilisation used in the inviscid case by simply taking $\mu=0$.

\begin{remark}
Recalling that $\min\lbrace a,b,c \rbrace$ can be approximated by $(1/a^2 +1/b^2+1/c^2)^{-1/2}$, we can  write that 
\begin{equation*}
\gamma_{K,1}\simeq \gamma_1 \min\lbrace \frac{d_K}{ \rho |v_h|}, \frac{d_t}{\rho}, \frac{d_K^2}{\mu}\rbrace,\quad \gamma_{K,2}\simeq \gamma_2 \max\lbrace \frac{ \rho |v_h|}{d_K}, \frac{\rho}{d_t}, \frac{\mu}{d_K^2} \rbrace.
\end{equation*}
\end{remark}

\subsection{Kinetic energy estimate}\label{subsec:NaStEB}

By putting together the previous results, we immediately get:

\begin{theorem}\label{thm:CoercNS}(Coercivity of  $\aNavierStokes$)
For all $\psi_h=(\phi_h,\chi_h)\in \mathbb{V}_h\times\mathbb W_h$, one has
\begin{equation*}
\begin{split}
\aNavierStokes(\psi_h)(\psi_h)=&
\mu\intOmega|\nabla \phi_h|^2 + \mu\int_{\GammaWall\cup\GammaIn}\left(\frac{\gamma}{d_K}\phi_h^2  - 2  \phi_h\cdot\frac{\partial \phi_h}{\partial n}\right)+ \mu\int_{\GammaSym}\left(\frac{\gamma}{d_K}\phi_{h,n}^2  - 2  \phi_{h,n}\frac{\partial \phi_h}{\partial n}\cdot n\right)\\
&+
\frac12 \int_{\GammaCharacteristic}|A_n(\psi_h)|_{\Theta} \psi_h\cdot \psi_h  +\frac12 \int_{\GammaWall\cup\GammaSym\cup\GammaIn}   |A_n(\psi_h)|_{\Theta} (\phi_h,0)\cdot (\phi_h,0)  \\ 
&+ \int_{\GammaOut}\left(\frac{\rho}{2}|\phi_{h,n}|\left(\phi_h\right)_{n}^{\perp}\cdot \left(\phi_h\right)_{n}^{\perp}+Q(\phi_h, \chi_h-\mu\frac{\partial \phi_h}{\partial n}\cdot n,1)\right).
\end{split}
\end{equation*}
\end{theorem}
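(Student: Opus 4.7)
The plan is to split $\aNavierStokes(\psi_h)(\psi_h) = \aEuler(\psi_h)(\psi_h) + \mu\,\aVisc(\psi_h,\psi_h)$ and treat the two contributions separately. The Euler part is already handled by the energy lemma for $\aEuler$ proved just before: substituting $u_h=\psi_h$ delivers verbatim the three integrals on $\GammaCharacteristic$, $\GammaWall\cup\GammaSym\cup\GammaIn$ and $\GammaOut$ that appear on the right-hand side, except that the outflow integrand is $\frac{\rho}{2}|\phi_{h,n}|(\phi_h)_{n}^{\perp}\cdot(\phi_h)_{n}^{\perp}+Q(\phi_h,\chi_h,1)$ rather than the shifted $Q$ announced in the statement. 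No further manipulation is needed for that piece.

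Second, I would plug $v_h=\phi_h,\ p_h=\chi_h$ directly into the definition (\ref{eq:NitscheVisc}) of $\aVisc$. The volume integral yields $\int_\Omega|\nabla\phi_h|^2$; each of the two symmetric contributions on $\GammaWall\cup\GammaIn$ and on $\GammaSym$ collapse (by the obvious symmetry of the Nitsche integrand under $u_h=\psi_h$) to $\frac{\gamma}{d_K}|\phi_h|^2-2\phi_h\cdot\partial_n\phi_h$ and $\frac{\gamma}{d_K}\phi_{h,n}^2-2\phi_{h,n}\partial_n\phi_h\cdot n$ respectively. After multiplication by $\mu$, these match the first three terms in the claim.

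Finally, setting $w:=\mu\,\partial_n\phi_h\cdot n$, the outflow piece of $\mu\aVisc(\psi_h,\psi_h)$ reduces to $\frac{1}{\theta}\bigl(w^2-2\chi_h w\bigr)$. The crux of the proof is the algebraic identity
\begin{equation*}
Q(\phi_h,\chi_h,1)+\tfrac{1}{\theta}\bigl(w^2-2\chi_h w\bigr)=Q(\phi_h,\chi_h-w,1),
\end{equation*}
obtained by shifting $p\mapsto p-w$ in the definition $Q(v,p,1)=\tfrac{\rho}{2}|v_n|v_n^2+\tfrac{1}{\theta}(\rho v_n^- v_n+p)p$. Once this identity is verified, the outflow integrand takes precisely the announced form and the proof is complete.

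The main obstacle is the algebraic bookkeeping on $\GammaOut$: the viscous normal contribution must be absorbed into the pressure slot of $Q$ so that all boundary quantities remain expressed through the canonical functional $Q(\cdot,\cdot,1)$, thereby exposing the coercivity structure at the outflow in the Navier--Stokes regime in the same form as in the inviscid case. Everything else is a direct expansion using the already established energy identity for $\aEuler$ and the symmetric structure of Nitsche's bilinear form.
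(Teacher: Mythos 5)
Your decomposition $\aNavierStokes=\aEuler+\mu\,\aVisc$, the use of the energy identity (\ref{eq:Euler_positivity}) for the Euler part, and the direct evaluation of the Nitsche terms on $\GammaWall\cup\GammaIn$ and $\GammaSym$ are exactly the route the paper takes (its proof is the one-line ``putting together the previous results''), and those parts of your argument are correct, as is your computation that $\mu\,\aVisc(\psi_h,\psi_h)$ contributes $\tfrac1\theta\bigl(w^2-2\chi_h w\bigr)$ on $\GammaOut$ with $w=\mu\,\frac{\partial\phi_h}{\partial n}\cdot n$.

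The gap is in the one step you single out as the crux and then do not carry out. The claimed identity
\begin{equation*}
Q(\phi_h,\chi_h,1)+\tfrac1\theta\bigl(w^2-2\chi_h w\bigr)=Q(\phi_h,\chi_h-w,1)
\end{equation*}
is not an identity. Setting $a=\rho\,\phi_{h,n}^-\phi_{h,n}=\rho\,(\phi_{h,n}^-)^2$, the definition $Q(v,p,1)=\tfrac{\rho}{2}|v_n|v_n^2+\tfrac1\theta(a+p)p$ gives
\begin{equation*}
Q(\phi_h,\chi_h-w,1)=Q(\phi_h,\chi_h,1)+\tfrac1\theta\bigl(w^2-2\chi_h w\bigr)-\tfrac1\theta\,a\,w,
\end{equation*}
because the shift $p\mapsto p-w$ also acts on the linear cross term $\tfrac1\theta a\,p$, not only on $\tfrac1\theta p^2$. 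Hence the two sides of your identity differ by $\tfrac{\rho}{\theta}(\phi_{h,n}^-)^2\,\mu\,\frac{\partial\phi_h}{\partial n}\cdot n$, which vanishes only where $\phi_{h,n}\ge 0$ (no re-entrant flow) or where the normal viscous stress vanishes. So ``once this identity is verified'' cannot be completed as written: you must either record this residual cross term explicitly or restrict to $\phi_{h,n}^-\,w=0$. (The same discrepancy is arguably latent in the printed statement, given the stated form of $\aVisc$ on $\GammaOut$; but precisely because you present the identity as the step that makes the proof work, you need to expand $Q$ and confront the cross term rather than assert that the pressure shift produces only the quadratic completion.)
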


\begin{remark}\label{rmk:}
For $\theta$ given in (\ref{eq:ChoiceThetaFinal}) and $\gamma$ sufficiently large we obtain, according to Remark~\ref{rmk:EnergyEuler},
\begin{equation*}
\begin{split}
\aNavierStokes(\psi_h)(\psi_h) \gtrsim &
\, \mu\intOmega|\nabla \phi_h|^2 + \int_{\GammaWall\cup\GammaIn}\frac{\mu}{d_K}\phi_h^2 + \int_{\GammaSym}\frac{\mu}{d_K}\phi_{h,n}^2 +\frac12\int_{\GammaCharacteristic}|A_n(\psi_h)|_{\Theta} \psi_h\cdot \psi_h  \\
+&\frac12 \int_{\GammaWall\cup\GammaSym\cup\GammaIn}   \left(\rho  |\phi_{h,n}| \phi_h^2 +\theta \phi_{h,n}^2\right) + \frac12 \int_{\GammaOut}    \left(\rho  |\phi_{h,n}| \phi_h^2 +\frac{1}{\theta}\left(\chi_h-\mu\frac{\partial \phi_h}{\partial n}\cdot n\right)^2\right).
\end{split}
\end{equation*}
\end{remark}

\begin{theorem}\label{thm:EnergyNS}(Energy estimate for Navier-Stokes)
Any discrete solution $u_h$ to (\ref{eq:NavierStokesDiscrete}) satisfies
\begin{equation*}
\begin{split}
\frac{d}{dt}\int_{\Omega}\frac{\rho}{2} v_h^2 =& -\mu\intOmega|\nabla v_h|^2 +\int_{\Omega} f\cdot v_h -\left(\astab(u_h)(u_h)-\lstab(u_h)(u_h)\right)\\
&- \int_{\GammaCharacteristic}\left(|A_n(u_h)|_{\Theta}u_h+\ThetaM{A_n(u_h)}u^D\right)\cdot u_h\\
&- \mu\int_{\GammaWall\cup\GammaIn}\left((v_h-\vD)\cdot\left(\frac{\gamma}{d_K}v_h-\frac{\partial v_h}{\partial n}\right) + v_h\cdot\frac{\partial v_h}{\partial n}\right) - \mu\int_{\GammaSym}v_{h,n}\left(\frac{\gamma}{d_K}v_{h,n}-2\frac{\partial v_h}{\partial n}\cdot n\right)\\
&-\frac12 \int_{\GammaWall\cup\GammaSym\cup\GammaIn}|A_n(u_h)|_{\Theta} (v_h,0)\cdot (v_h,0) 
- \int_{\GammaIn}  \rho v_{h,n}^{-}\vD\cdot v_h + \vD_n (p_h-\alpha v_{h,n})\\ 
&  -\int_{\GammaOut}\left(\frac{\rho}{2}|v_{h,n}|\left(v_{h}\right)_n^{\perp}\cdot \left(v_h\right)_n^{\perp}+Q(v_h,p_h-\mu\frac{\partial v_h}{\partial n}\cdot n, 1)\right) 
+ \int_{\GammaOut}   \left(  \frac{1}{\theta} \left( p_h -\mu\frac{\partial v_h}{\partial n}\cdot n \right) - v_{h,n} \right)\pD.
%
\end{split}
\end{equation*}
For vanishing data $f$, $v^D$, $p^D$ one gets
\begin{equation*}
\frac{d}{dt}\int_{\Omega}\frac{\rho}{2} v_h^2 = - \aNavierStokes(u_h)(u_h)-\astab(u_h)(u_h)
\end{equation*}
so $ \displaystyle{\frac{d}{dt}}\int_{\Omega}\displaystyle{\frac{\rho}{2}} v_h^2 \leq 0$ for $\theta$ given in (\ref{eq:ChoiceThetaFinal}) and for $\gamma$ sufficiently large.
\end{theorem}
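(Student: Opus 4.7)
The plan is to test the discrete equation (\ref{eq:NavierStokesDiscrete}) against $\psi_h=u_h$, use $\rho\int_{\Omega}\partial_t v_h\cdot v_h=\frac{d}{dt}\int_{\Omega}\frac{\rho}{2}v_h^2$, and rewrite the result as
\begin{equation*}
\frac{d}{dt}\int_{\Omega}\frac{\rho}{2}v_h^2=\lNavierStokesSUPG(u_h)(u_h)-\aNavierStokesSUPG(u_h)(u_h).
\end{equation*}
Splitting $\aNavierStokesSUPG=\aEuler+\mu\aVisc+\astab$ (and likewise for $\lNavierStokesSUPG$), I treat the stabilization contribution $\lstab(u_h)(u_h)-\astab(u_h)(u_h)$ as a single block appearing verbatim in the stated right-hand side. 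What remains is to match $\lNavierStokes(u_h)(u_h)-\aNavierStokes(u_h)(u_h)$ with the remaining boundary integrals.

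For the $\aNavierStokes$ piece, I would invoke Theorem \ref{thm:CoercNS} directly with $\psi_h=u_h$, which produces an explicit boundary-by-boundary expansion of $\aNavierStokes(u_h)(u_h)$, and then move each contribution to the right-hand side with a minus sign. The linear form $\lNavierStokes(u_h)(u_h)=\lEuler(u_h)(u_h)+\mu\lVisc(v_h)$ would then be expanded piece by piece: the volume term $\int_\Omega f\cdot v_h$, the characteristic datum $-\int_{\GammaCharacteristic}\ThetaM{A_n(u_h)}\uD\cdot u_h$, the inflow contributions $\int_{\GammaIn}(-\rho v_{h,n}^{-}\vD\cdot v_h+\alpha\vD_n v_{h,n}-p_h\vD_n)$, the outflow contributions $\int_{\GammaOut}(\frac{1}{\theta}\pD p_h-\pD v_{h,n})$, together with the Nitsche data pieces $-\mu\int_{\GammaIn}\vD\cdot(\frac{\partial v_h}{\partial n}-\frac{\gamma}{d_K}v_h)$ and $-\mu\int_{\GammaOut}\frac{1}{\theta}\frac{\partial v_h}{\partial n}\cdot n\,\pD$ coming from $\mu\lVisc(v_h)$. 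These would be added to the corresponding blocks from $-\aNavierStokes(u_h)(u_h)$, and a term-by-term regrouping on each boundary segment recovers the stated expression: on $\GammaWall\cup\GammaIn$ the Nitsche pairing collects into the pattern $(v_h-\vD)\cdot(\frac{\gamma}{d_K}v_h-\frac{\partial v_h}{\partial n})+v_h\cdot\frac{\partial v_h}{\partial n}$, while on $\GammaIn$ and $\GammaOut$ the inviscid terms and the $Q$ form combine with the remaining data-dependent residuals.

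For the vanishing-data case, the linear forms $\lNavierStokes(u_h)(u_h)$ and $\lstab(u_h)(u_h)$ both vanish identically when $f=0$, $\vD=0$, $\pD=0$, so the identity collapses to
\begin{equation*}
\frac{d}{dt}\int_{\Omega}\frac{\rho}{2}v_h^2=-\aNavierStokes(u_h)(u_h)-\astab(u_h)(u_h).
\end{equation*}
The sign $\leq 0$ then follows from $\astab(u_h)(u_h)\geq 0$ (squared structure of the stabilization form) together with $\aNavierStokes(u_h)(u_h)\geq 0$, which holds for $\theta$ given by (\ref{eq:ChoiceThetaFinal}) and $\gamma$ sufficiently large by the lower bound in the remark following Theorem \ref{thm:CoercNS} (each displayed boundary block there is non-negative, and the interior viscous term $\mu\int_\Omega|\nabla v_h|^2$ is clearly non-negative). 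The main difficulty is purely one of bookkeeping: matching signs and test-function pieces on every boundary segment so that the Nitsche pairing, the inviscid $|A_n|_\Theta$ block, the modified outflow $Q$ contribution, and the data residuals line up with the precise grouping written in the statement. No analytical ingredient beyond Theorem \ref{thm:CoercNS} and the non-negativity of $\astab$ is required.
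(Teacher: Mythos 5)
Your proposal is correct and is exactly the argument the paper intends (it gives no written proof, presenting the theorem as an immediate consequence of Theorem~\ref{thm:CoercNS}): test (\ref{eq:NavierStokesDiscrete}) with $\psi_h=u_h$, substitute the coercivity identity for $\aNavierStokes(u_h)(u_h)$, expand $\lNavierStokes(u_h)(u_h)$ boundary by boundary, and keep $\astab-\lstab$ as a block, with the vanishing-data case and the sign conclusion following from the remark after Theorem~\ref{thm:CoercNS} and $\astab(u_h)(u_h)\ge0$. The only caveat is purely editorial: carrying out your bookkeeping literally yields a factor $\frac12$ on the $|A_n(u_h)|_{\Theta}u_h\cdot u_h$ term on $\GammaCharacteristic$ and the opposite sign on the $v_h\cdot\frac{\partial v_h}{\partial n}$ term inside the $\GammaWall\cup\GammaIn$ integral, so the displayed formula in the statement appears to contain typos rather than your derivation containing errors.
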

%
\subsection{Conservation of momentum}\label{sec:Momentum}
%
In addition to kinetic energy, further physical quantities such as linear momentum, vorticity, and helicity are conserved by solutions to the Euler equations. A natural extension would then be to construct approximations with similar behavior. These questions have been recently discussed in \cite{EvansHughes13b,EvansHughes13c}, where for example momentum conservation in simple domains has been shown for isogeometric B-spline methods, with the key ingredient of discrete solenoidal functions. The purpose of this subsection is to give a discrete balance of linear momentum for our method based on classical $ Q^1\times Q^1$ finite elements.

The continuous equations have the global momentum balance:
\begin{equation}\label{eq:MomentumConservationContinuous}
\rho\frac{d}{dt}\int_{\Omega} v = \int_{\Omega} f +\int_{\partial\Omega}q(u)\quad\mbox{with}\quad  q(u) := (\mu \nabla v - p I + \rho v\otimes v)n.
\end{equation}
%

In order to obtain a discrete analogue of (\ref{eq:MomentumConservationContinuous}) we integrate by parts the term $ \rho v_h \cdot (v_h\cdot \nabla \phi_h) /2 $ coming from the anti-symmetrization in $\aEuler$. We then have 
\begin{equation}\label{eq:DefEuler}
\aEuler(u_h)(\psi_h)= a_0(u_h)(\psi_h)+ b(p_h,\phi_h) - b(\chi_h,v_h),
\end{equation}
where:
\begin{equation}\label{Def_a0primal}
\begin{split}
a_0(u_h)(\psi_h)=& \intOmega \rho \left( (v_h\cdot \nabla v_h)\cdot \phi_h + \frac{1}{2} \div v_h (v_h \cdot\phi_h)\right)-\int_{\partial \Omega}\rho v_{h,n}^- v_h\cdot\phi_h+\int_{\GammaOut} \frac{1}{\theta} \left(\rho v_{h,n}^- v_{h,n}+p_h\right) \chi_h\\ 
&+\int_{\GammaWall\cup\GammaSym\cup\GammaIn}  \alpha v_{h,n}\phi_{h,n}+\int_{\GammaCharacteristic} \left(\rho v_{h,n}^- v_h\cdot\phi_h + p_h\phi_{h,n}- \ThetaM{A_n(u_h)} u_h\cdot\psi_h\right),\\
b(p_h,\phi_h)=&-\int_{\Omega} p_h\div \phi_h+\int_{\GammaWall\cup\GammaIn\cup\GammaSym} p_h \phi_{h,n}.
\end{split} 
\end{equation}
Let $e^i$ denote the unit vector of the $i$-th coordinate. 
Taking the test function $\psi_h^i=(e^i,(\rho v_h\cdot e^i)/2)$, which we are allowed to do thanks to the weak formulation and to the equal-order approximation, we obtain from (\ref{Def_a0primal}) for the Euler part and from (\ref{eq:NitscheVisc}) for the viscous contribution:
\begin{align*}
\lNavierStokes(u_h)(\psi_h^i) - \aNavierStokes(u_h)(\psi_h^i) 
=\int_{\Omega} f^i  
+\int_{\partial\Omega} q^i(u_h)-\int_{\partial\Omega} \eps_h^i(u_h), \quad 1\leq i\leq d
\end{align*}
with
\begin{align*}
\eps_h(u_h) :=
\begin{cases}
\mu \frac{\partial v_h}{\partial n}- \Psi_h\ThetaM{A_n(u_h)}(u_h-\uD) & \mbox{on $\GammaCharacteristic$}\\
\mu \frac{\partial v_h}{\partial n}-\rho v_{h,n}^- v_h-(p_h-\pD) n +
\frac{\rho}{ 2\theta}  \left(\rho v_{h,n}^-v_{h,n}+p_h-\pD-\mu\frac{\partial v_h}{\partial n}\cdot n\right) v_h & \mbox{on $\GammaOut$}\\
v_{h,n} \left(\alpha n-\frac{\rho}{2} v_h\right)-\rho  v_{h,n}^{-}v_h + \frac{\gamma\mu}{d_K}v_h & \mbox{on $\GammaWall$}\\
v_{h,n} \left(\alpha n -\frac{\rho}{2} v_h\right)-\rho  v_{h,n}^{-}v_h +  \frac{\gamma \mu}{d_K} v_{h,n} n  -  \mu \left(\frac{\partial v_h}{\partial n}\right)_n^{\perp} & \mbox{on $\GammaSym$}\\
 ( v_{h,n} -   \vD_n ) \left(\alpha n- \frac{\rho}{2} v_h\right)-\rho  v_{h,n}^{-}(v_h -\vD)  +  \frac{\gamma\mu}{d_K}(v_h-\vD) & \mbox{on $\GammaIn$}\\
\end{cases}
\end{align*}
and with $\Psi_h $ the matrix of lines $\psi_h^i$.
Therefore with $q_h := q - \eps_h$ and taking into account SUPG stabilisation, we have the discrete momentum balance:
\begin{equation}\label{eq:MomentumConservationContinuousDiscrete}
\frac{d}{dt}\int_{\Omega} \rho v_h = \int_{\Omega} f -\sum_{K\in\Cells_h}  \int_K \frac{\gamma_{K,1} \rho}{2} \left(\rho\frac{\partial v_h}{\partial t}+\rho v_h\cdot\nabla v_h-\mu \Delta v_h+\nabla p_h\right) \cdot \nabla v_h + \int_{\partial\Omega}q_h(u_h).
\end{equation}
%

%
\subsection{Alternative discretization with strong enforcement of normal velocity}\label{sec:AlternativeDiscretization}
%
In order to ease the comparison with other formulations of the Navier-Stokes equations, we integrate by parts in $\aEuler$ the term $ \rho (v_h\cdot \nabla v_h) \cdot \phi_h /2 $ and use the expression of $ A_n(u_h)u_h \cdot\psi_h$  on $\GammaCharacteristic$ to obtain:
\begin{equation}\label{Def_a0}
\begin{split}
a_0(u_h)(\psi_h)=& -\intOmega \rho \left(v_h\cdot (v_h\cdot \nabla \phi_h) + \frac{1}{2} \div v_h (v_h \cdot\phi_h)\right)+\int_{\partial \Omega}\rho v_{h,n}^+ v_h\cdot\phi_h 
+\int_{\GammaOut} \frac{1}{\theta} \left(\rho v_{h,n}^- v_{h,n}+p_h\right) \chi_h\\
&+\int_{\GammaWall\cup\GammaSym\cup\GammaIn}  \alpha v_{h,n}\phi_{h,n}+\int_{\GammaCharacteristic} \left(-\rho v_{h,n}^+ v_h\cdot\phi_h-\chi_h v_{h,n}+ \ThetaP{A_n(u_h)} u_h\cdot\psi_h\right). 
\end{split} 
\end{equation}

Starting from (\ref{Def_a0}) we give an alternative formulation with strong enforcement of the normal velocity on the inflow and wall boundaries, following the ideas of \cite{BazilevsHughes07, BazilevsMichlerCalo10a}.
On the outflow, we impose the 'energy' boundary condition $\rho v_{n}^-v+pn=\pD n $. In addition, in order to highlight the influence of the scaling, we also consider a characteristic boundary with the standard condition $A_n^-(u)(u-\uD)=0$ with $\theta=1$ (that is, not correctly scaled).  

Let the space of the velocity test-functions
$\widetilde{\mathbb{V}}_h=\lbrace v_h \in \mathbb{V}_h; \, v_{h,n}=0 \text{ on }\partial\Omega\setminus (\GammaOut\cup\GammaCharacteristic)\rbrace$. The alternative discrete formulation is obtained by replacing $a_0 $ in (\ref{Def_a0}) by

\begin{equation*}
\tilde{a}_0(u_h)(\psi_h)= -\intOmega \rho \phi_h \cdot (v_h\cdot \nabla \phi_h) +\int_{\GammaOut}\rho v_{h,n}^+ v_h\cdot\phi_h + \int_{\GammaCharacteristic} \left(-\chi_h v_{h,n}+A_n^+(u_h)u_h\cdot\psi_h\right),
\end{equation*} 
Using (\ref{eq:DefEuler}) we obtain the corresponding expression of  $\tilde{a}^{\rm Eu}$ and consequently of $\tilde{a}^{\rm NaSt}$. The viscous term and the SUPG-stabilization are unchanged.

The alternative discrete problem which will be further used for comparison reads: $\tilde{u}_h(t)\in \widetilde{\mathbb{V}}_h\times\mathbb W_h $,
\begin{equation}\label{eq:NavierStokesDiscreteAlternative}
\rho\int_{\Omega}\frac{\partial \tilde{v}_h}{\partial t}\cdot\phi_h +(\tilde{a}^{\rm NaSt}+\astab)(\tilde{u}_h)(\psi_h) =(\tilde{l}^{\rm NaSt}+\lstab)(\tilde{u}_h)(\psi_h)\quad\forall\psi_h\in \widetilde{\mathbb{V}}_h\times\mathbb W_h.
\end{equation}

\section{Numerical experiments}\label{sec:NumExp}
%
First we use an analytical solution of the Navier-Stokes equations for all $\mu>0$ in order to validate our code. We next carry out some experiments concerning the outflow condition. In the case of re-entrant flow, comparisons with the 'do-nothing' condition are also made. Finally, we present three test cases for inviscid flows. 

We use quadrilateral meshes and $Q_1\times Q_1$ finite elements for the spatial discretization. For the time-dependent problems, we use the BDF2 scheme with small time-steps in order to make sure that the spatial errors dominate. The values that we use for the different stabilization constants are the following:
\begin{equation*}
c_{\rm dt}=0.1,\quad c_{\rm St}=4,\quad \gamma=100,\quad\gamma_1=0.25,\quad \gamma_2=0.1.
\end{equation*}
We have discussed these values in \cite{BeckerCapatinaLuceTrujillo14d}.
%

\subsection{Kovasznay's exact solution}\label{subsec:ExactSol}
%
\subsubsection{Convergence under mesh refinement}\label{subsubsec:ExactSol1}
%
\begin{figure}[htdp]
\begin{center}
\subfigure[$\mu=0.025$.]{\includegraphics[width=0.99\textwidth]{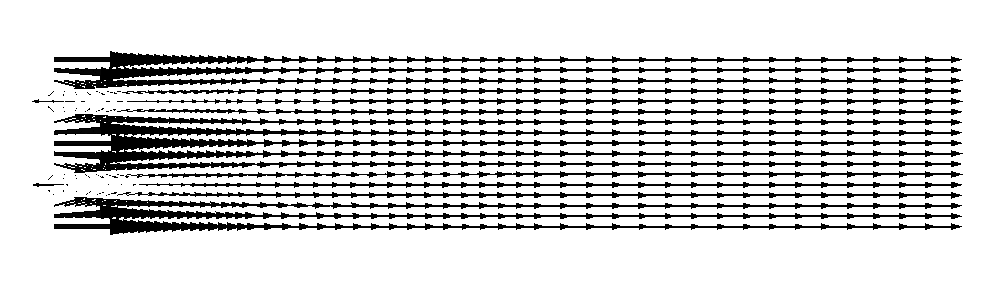}}
\subfigure[$\mu=0.0025$.]{\includegraphics[width=0.99\textwidth]{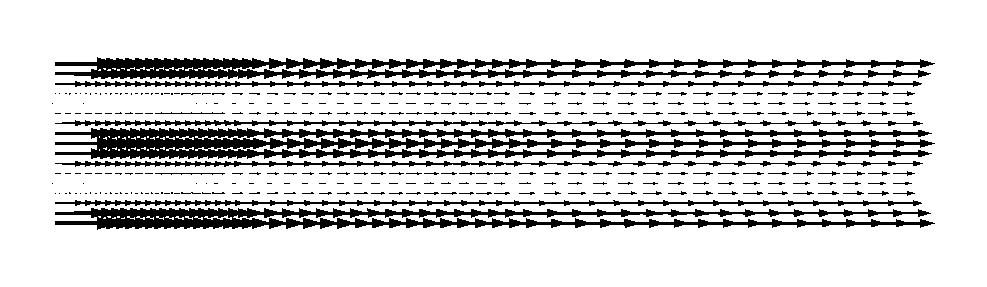}}
\subfigure[$\mu=0.00025$.]{\includegraphics[width=0.99\textwidth]{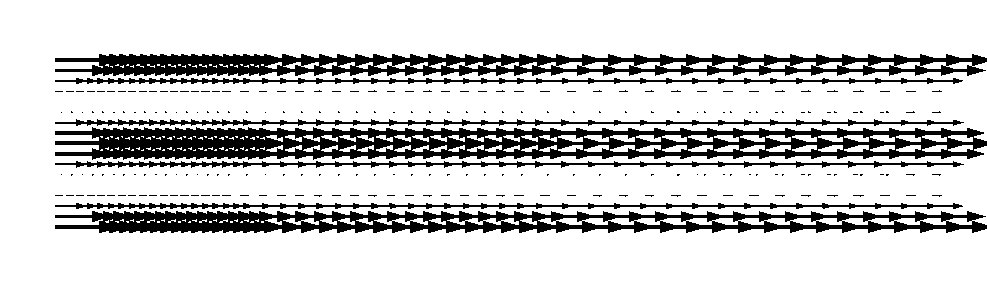}}
\subfigure[$\mu=0.0001$ at $t\approx30s$.]{\includegraphics[width=0.99\textwidth]{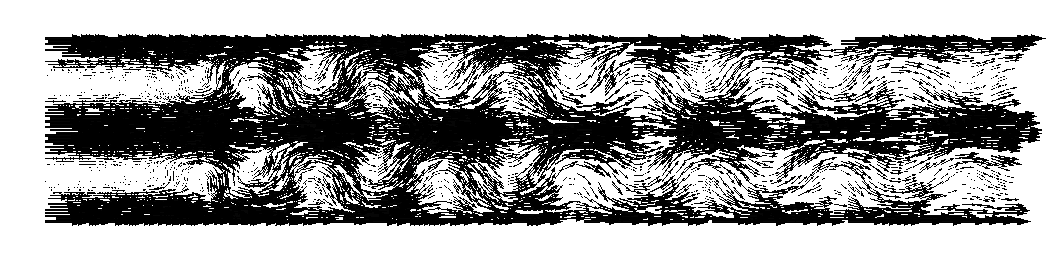}}
        \caption{Kovasznay flow with $N=3072$ nodes: velocity fields}
        \label{fig:Kovasznay1}
\end{center}
\end{figure}
In order to validate our code, we use the analytical solution given by Kovasznay for a solution of the Navier-Stokes equations, mimicking the two-dimensional flow behind a grid of circular cylinders \cite{Kovasznay48}. The solution is given by 
\begin{equation}\label{eq:kovasznay}
v = (1-e^{\lambda x_1}\cos(2\pi x_2),e^{\lambda x_1}\sin(2\pi x_2))^T,\; p= \frac12 -e^{2\lambda x_1},\;
\lambda = \frac{8\pi^2 \mu}{1+\sqrt{1+16\pi^2\mu^2}}
\end{equation}
We use the computational domain $\Omega=]-0.5, 10[\times]-0.5,1.5[$ and prescribe the analytical expression (\ref{eq:kovasznay}) on the left ($x_1=-0.5$), the upper ($x_2=1.5$) and lower ($x_2=-0.5$) boundaries, whereas the right ($x_1=10$) boundary is treated as an outflow.  Three test cases with $\mu=0.025,0.0025,0.00025$ corresponding to Reynolds numbers $40,400$, and $4000$ are considered. The vector fields is shown in Figure~\ref{fig:Kovasznay1}.

In Tables~\ref{tab:Kov1},\ref{tab:Kov2} and \ref{tab:Kov3} we show the pressure error in $L^2(\Omega)$-norm and the velocity errors in $H^1(\Omega)$- and $L^2(\Omega)$-norms; for each one, we also indicate the convergence order $O\left(h^a\right)$. The velocity errors in $H^1(\Omega)$ show the expected first-order convergence with slightly increasing values for higher Reynolds numbers. 
Second-order convergence of velocities in $L^2(\Omega)$ is observed. The velocity errors vary barely with respect to the Reynolds numbers. The convergence in pressure is better then first-order, as typically observed for equal-order stabilized methods. It shows a surprising decrease for $\mu=0.00025$, which is probably due to the special form of the pressure. 

Further decreasing the viscosity leads to bifurcation. A typical periodic solution for $\mu=0.0001$ is shown in Figure~\ref{fig:Kovasznay1}.

%
\begin{table}[htdp]
\begin{center}
\begin{tabular}{r|r|r|r|r|r|r|}
    $N$ &     $\norm{p-p_h}$ &    order $a$ &     $\norm{\nabla(v-v_h)}$ &    order $a$ &     $\norm{v-v_h}$ &    order $a$ \\\hline
   48&6.38e-01&--&5.89e+00&--&7.76e-01&--  \\\hline
  192&1.34e-01& 2.24&4.00e+00&1.31&2.58e-01&1.58     \\\hline
  768&2.83e-02&2.24&1.93e+00&1.04&5.43e-02&2.25     \\\hline
 3072&8.15e-03&1.79&9.61e-01&1.00&1.39e-02&1.96     \\\hline
12288&2.27e-03&1.84&4.80e-01&1.00&3.59e-03&1.94     \\\hline
49152&6.09e-04&1.89&2.40e-01&1.00&9.06e-04&1.98     \\
\end{tabular}\caption{Pressure and velocity errors, $\mu=0.025$.}\label{tab:Kov1}
\end{center}
\label{ref}
\end{table}
\begin{table}[htdp]
\begin{center}
	\begin{tabular}{r|r|r|r|r|r|r|}
	    $N$ &     $\norm{p-p_h}$ &    order $a$ &     $\norm{\nabla(v-v_h)}$ &    order $a$ &     $\norm{v-v_h}$ &    order $a$ \\\hline
             48&4.13e-01&--&7.22e+00&--&5.42e-01&--\\\hline
            192&1.64e-01&1.32&6.48e+00&0.15&4.01e-01&0.43   \\\hline
            768&2.03e-02&3.06&3.21e+00&1.01&8.86e-02&2.17   \\\hline
           3072&3.67e-03&2.46&1.60e+00&1.00&2.21e-02&2.00   \\\hline
          12288&1.12e-03&1.70&8.00e-01&1.00&5.69e-03&1.95   \\\hline
          49152&4.14e-04&1.43&4.00e-01&1.00&1.47e-03&1.95   \\
\end{tabular}\caption{Pressure and velocity errors, $\mu=0.0025$}\label{tab:Kov2}
\end{center}
\label{ref}
\end{table}
\begin{table}[htdp]
\begin{center}
	\begin{tabular}{r|r|r|r|r|r|r|}
	    $N$ &     $\norm{p-p_h}$ &    order a &     $\norm{\nabla(v-v_h)}$ &    order a &     $\norm{v-v_h}$ &    order a \\\hline
             48&1.76e-01&--&9.73e+00&--&4.76e-01&--  \\\hline
            192&6.40e-02&1.45&9.22e+00&0.08&4.44e-01&0.09     \\\hline
            768&1.27e-02&2.32&4.50e+00&1.03&8.95e-02&2.31     \\\hline
           3072&2.66e-03&2.26&2.22e+00&1.02&2.07e-02&2.11     \\\hline
          12288&4.10e-04&2.69&1.11e+00&1.00&5.20e-03&1.99     \\\hline
          49152&1.92e-05&4.43&5.52e-01&1.00&1.37e-03&1.92    \\
\end{tabular}\caption{Pressure and velocity errors, $\mu=0.00025$}\label{tab:Kov3}
\end{center}
\label{ref}
\end{table}
%
%
\subsubsection{Comparison with the alternative discretization}\label{subsubsec:ExactSol1}
%
In Figures~\ref{fig:CompareStrong1} and \ref{fig:CompareStrong2} we compare the errors with the alternative discretization with strongly imposed normal velocity given in (\ref{eq:NavierStokesDiscreteAlternative}) for $\mu=0.025$ and $\mu=0.00025$, respectively. The labels 'strong' and 'weak' in the figures correspond to the implementation type of the boundary condition.
We find that the velocity errors are globally very similar and that the pressure errors are slightly smaller for the discretization proposed in this article.
\begin{figure}[h!]
\begin{center}
\subfigure[Pressure error in $L^2$-norm]{\includegraphics[width=0.32\textwidth]{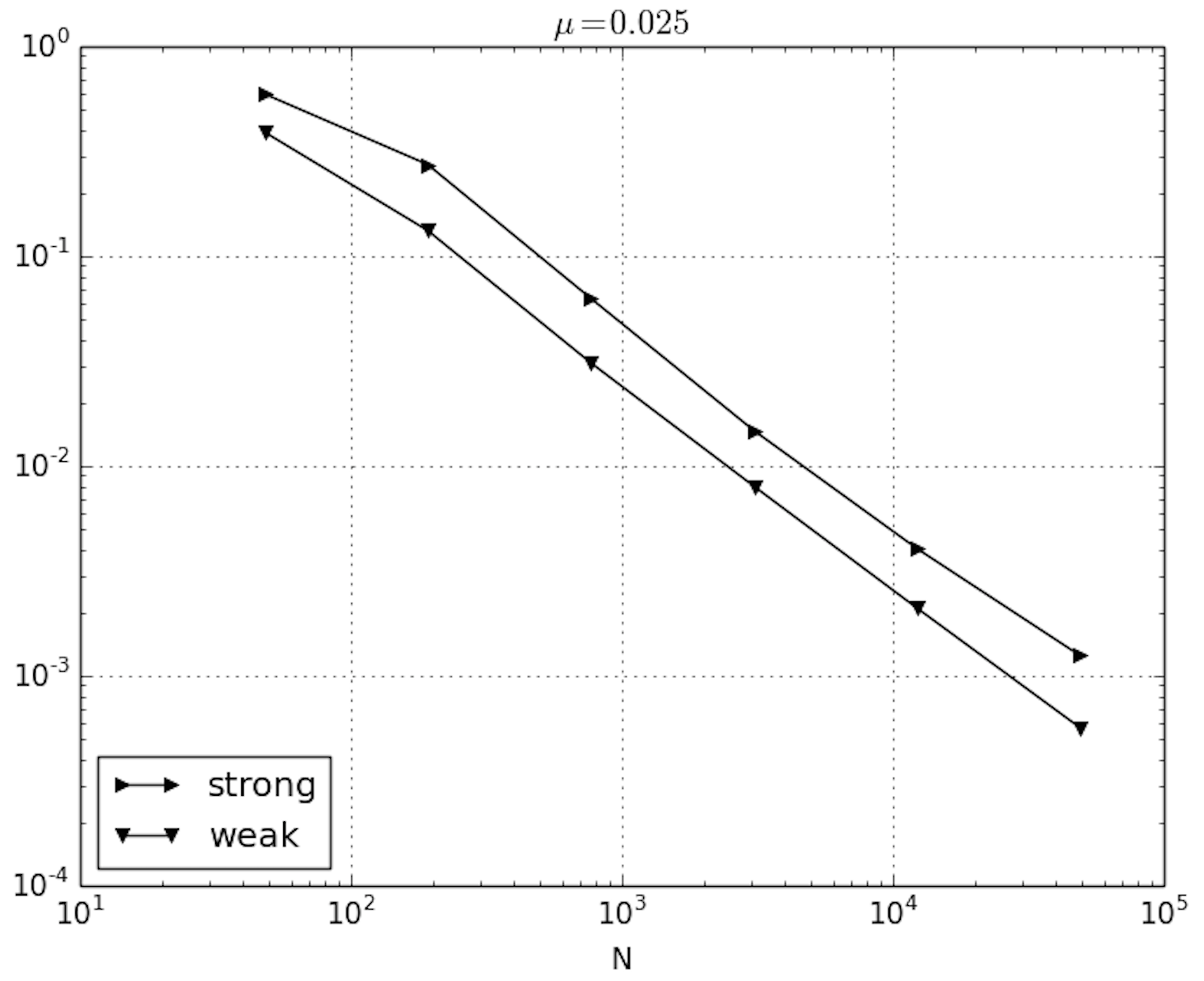}}
\subfigure[Velocity error in $H^1$-seminorm]{\includegraphics[width=0.32\textwidth]{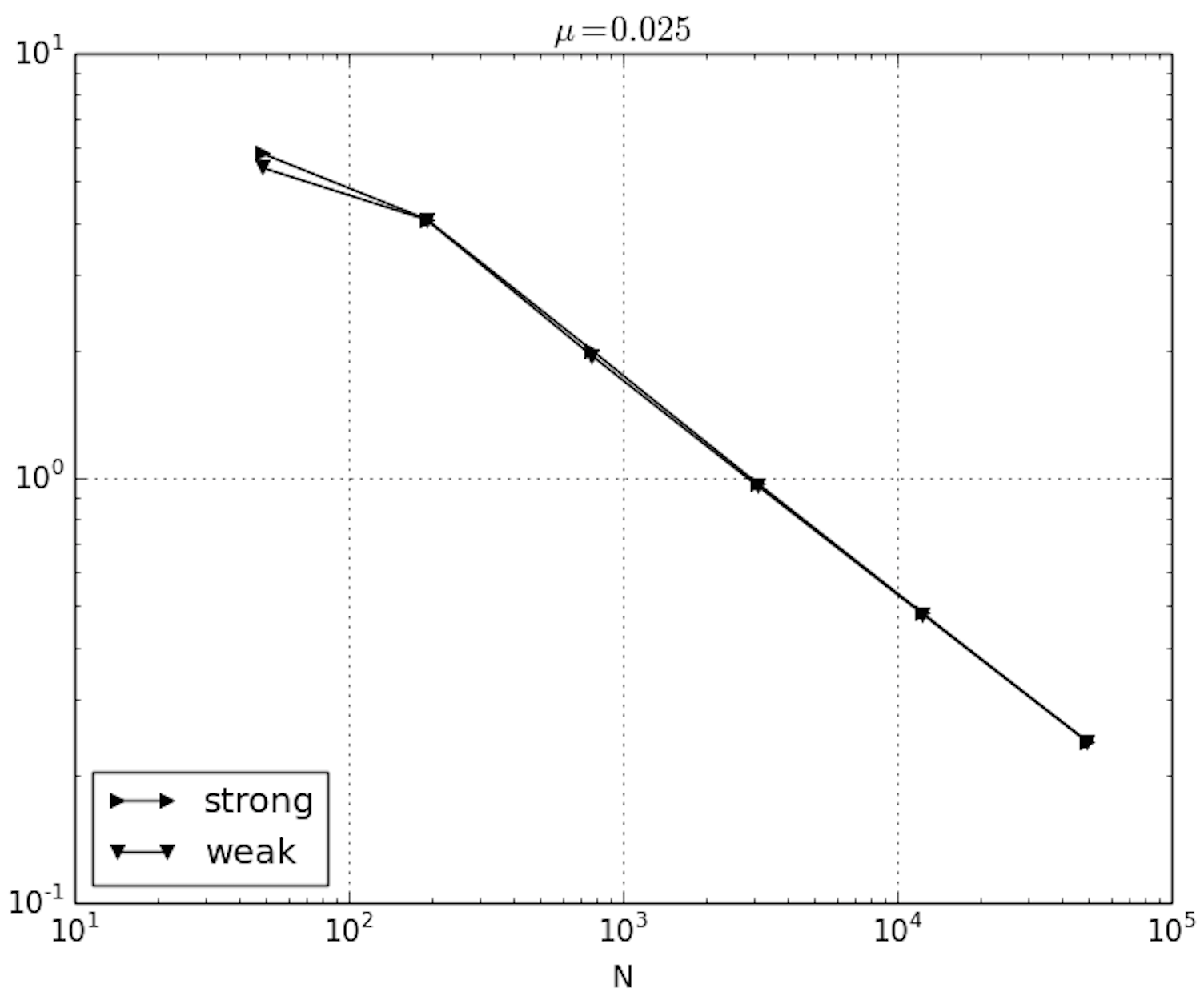}}
\subfigure[Velocity error in $L^2$-norm]{\includegraphics[width=0.32\textwidth]{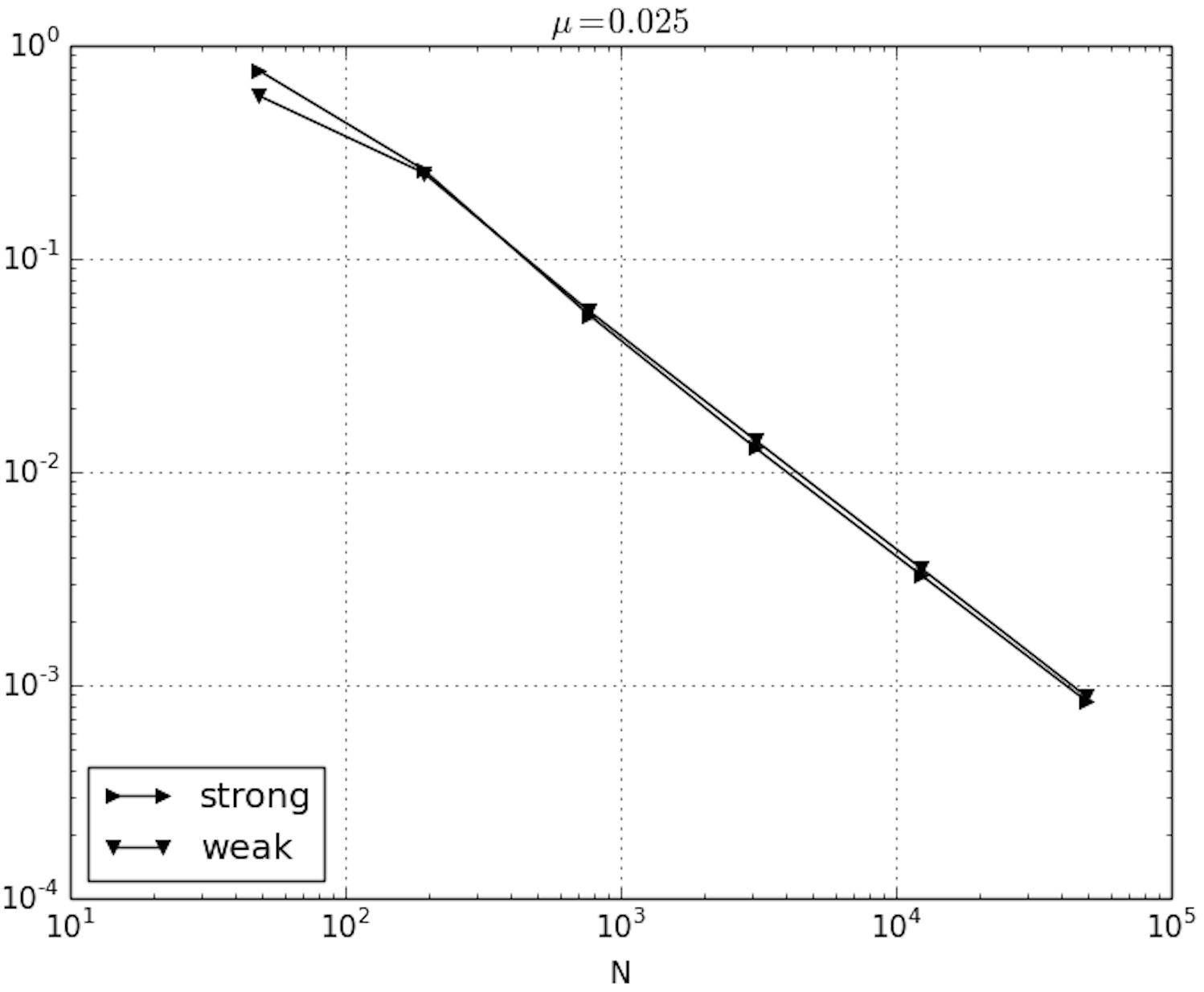}}
\caption{Comparison of discretization errors for $\mu=0.025$}\label{fig:CompareStrong1}
\end{center}
\end{figure}
\begin{figure}[h!]
\begin{center}
\subfigure[Pressure error in $L^2$-norm]{\includegraphics[width=0.32\textwidth]{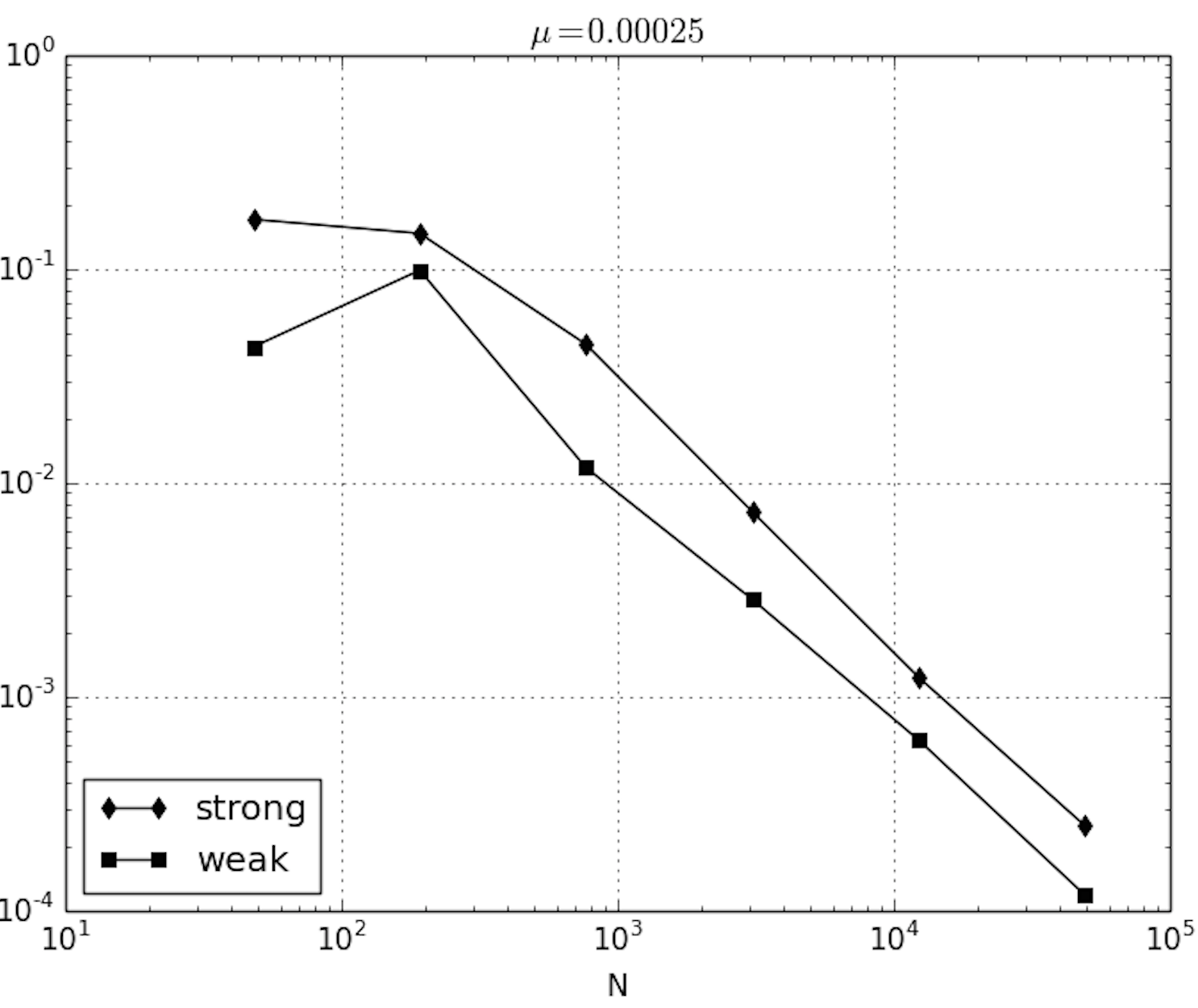}}
\subfigure[Velocity error in $H^1$-seminorm]{\includegraphics[width=0.32\textwidth]{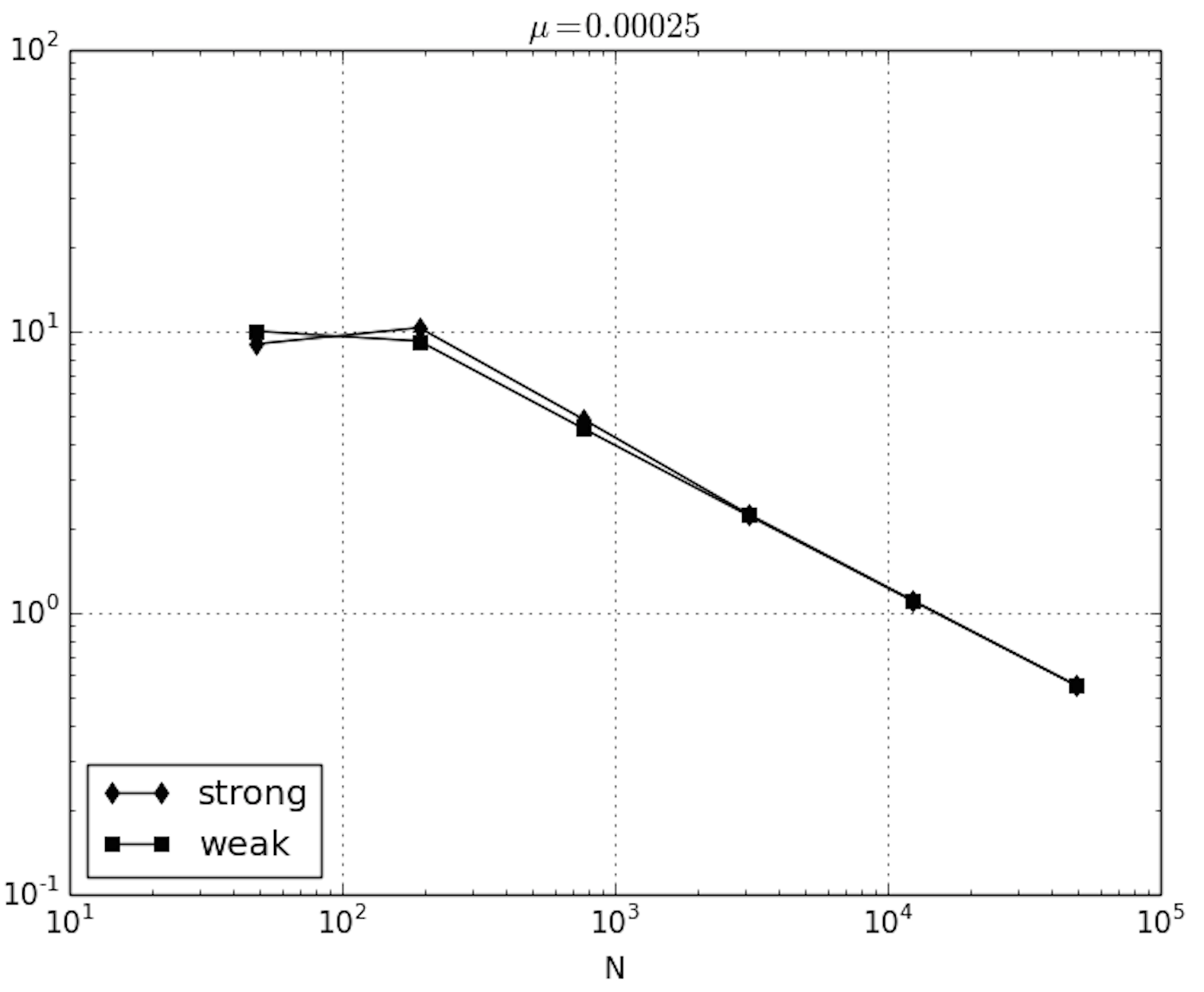}}
\subfigure[Velocity error in $L^2$-norm]{\includegraphics[width=0.32\textwidth]{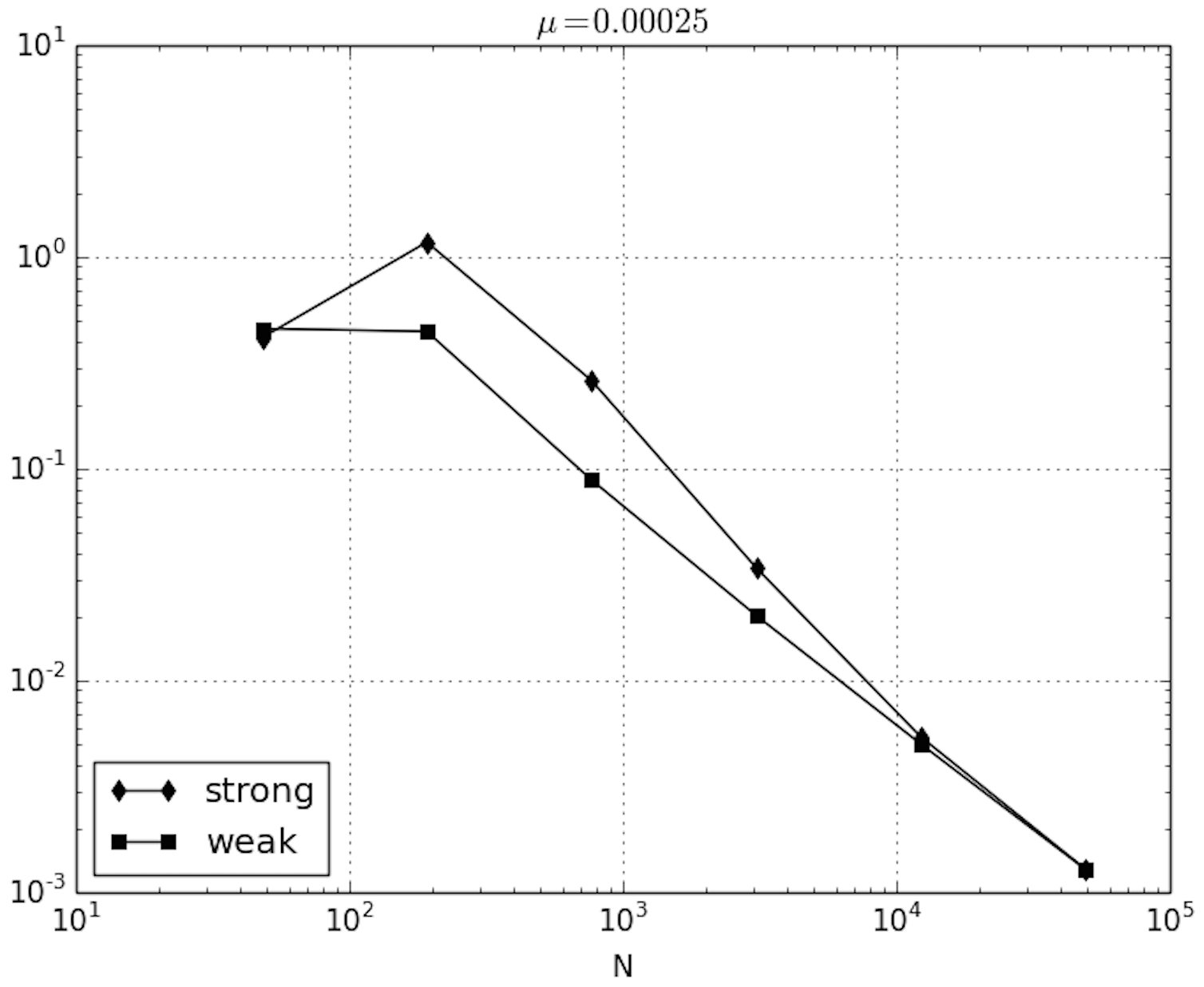}}
\caption{Comparison of discretization errors for $\mu=0.00025$}\label{fig:CompareStrong2}
\end{center}
\end{figure}
%
\subsection{Outflow boundary condition}\label{subsec:OutFlow}
%
In this subsection we investigate the behavior of the proposed outflow boundary condition. In practice, outflow boundary conditions are often used in order to limit the computational domain. The so-called 'do-nothing' condition $pn-\mu\partial_n v=0$, resulting from the  standard weak formulation of the Stokes equations with vector laplacian, is widely used in practice, since it is easy to implement and yields satisfactory results in many situations. 

In the case of a cylindrical domain cut by a plane section, it amounts to prescribing the mean pressure if no-slip boundary conditions are given on the walls of the cylinder \cite{HeywoodRannacherTurek92}. An explication for its success is the fact that the Poiseuille flow in a channel satisfies the boundary conditions, and the channel could therefore be cut at any position for this flow. Therefore, the general advise is to use a sufficiently long computational domain in order to cut the domain at a point where Poiseuille flow can be expected. However, in some applications it may be difficult to determine such a distance in advance, or even impossible due to changes in time or changes in parameters, as in optimization problems. Therefore, it is important to understand what happens if the domain is not cut at a stationary Poiseuille-type profile. 

One major difficulty related to the outflow condition is its stability. The 'do-nothing' condition has been reported in the literature to lead to instabilities, if the flow is re-entering the computational domain \cite{BraackMucha13}. As shown in Theorem~\ref{thm:EnergyNS}, we have control over the kinetic energy for our formulation.

Since our proposal leads to a modification of the boundary condition, the purpose of the following experiments is to investigate the behavior of the formulation, especially the changes induced by the additional terms.

\subsubsection{Backward facing step}\label{subsubsec:BFS}

In order to illustrate the possible instability of the 'do-nothing' boundary condition, we consider the following non stationary problem. The initial condition is defined by the stationary solution to the backward facing step problem at $Re=800$. The domain is chosen to cut the second recirculation zone, such that there is re-entrant flow. The non stationary computation is done with a ten-times smaller viscosity, such that $Re=8000$. We use the implicit Euler scheme with small time steps. It turns out that on coarse meshes, the 'do-nothing' boundary condition becomes unstable after a certain time.  As shown in Figure~\ref{fig:bfscompare}, the velocity field blows up at the re-entrant part of the outlet boundary. Consequently, for this condition the control over the kinetic energy is lost, see Figure~\ref{fig:bfscompareK}. On the contrary,  the 'energy' boundary condition allows to control the kinetic energy cf. Figure~\ref{fig:bfscompareK}, and this even for a longer-term simulation.

\begin{figure}[h!]
\begin{center}
\subfigure[$t=92s$]{\includegraphics[scale=0.2]{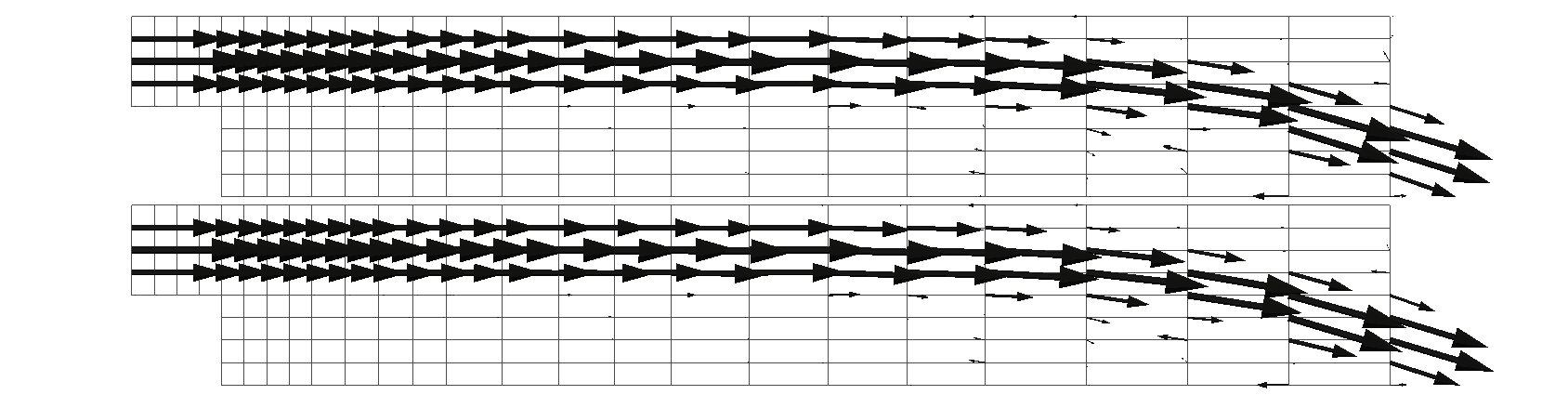}}
\subfigure[$t=115s$]{\includegraphics[scale=0.2]{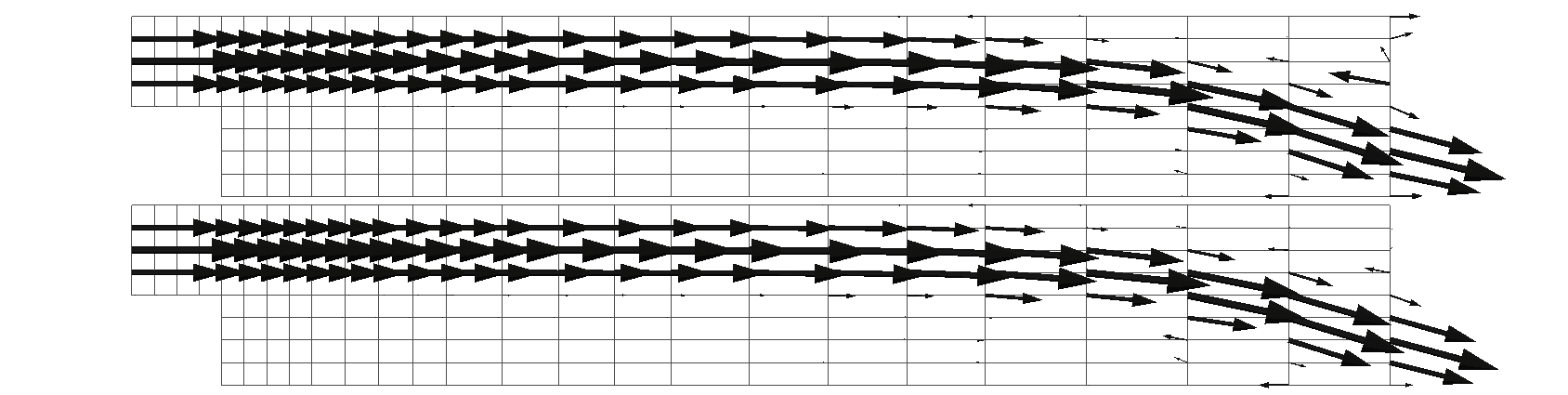}}
\subfigure[$t=125s$]{\includegraphics[scale=0.2]{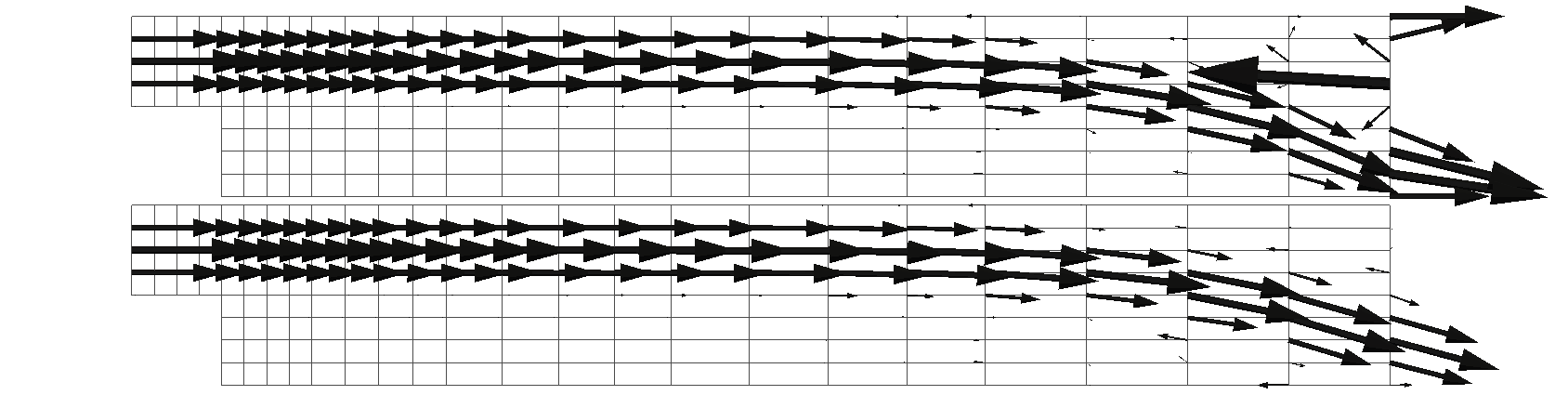}}
\caption{Comparison of velocity fields for the 'do-nothing' condition (upper image) and the 'energy' boundary condition (lower image) at three different times close to the blow-up in the first case}\label{fig:bfscompare}
\end{center}
\end{figure}

\begin{figure}[!ht]
\begin{center}
\includegraphics[scale=0.45]{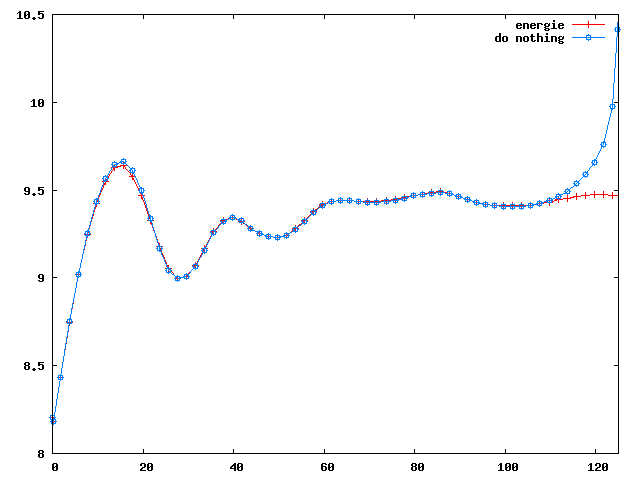}
\caption{Evolution of the kinetic energy for the 'do-nothing' and the 'energy' conditions.}\label{fig:bfscompareK}
\end{center}
\end{figure}

\subsubsection{Stationary cylinder benchmark}\label{subsubsec:Cyl}

Our next test is the computation of the drag coefficient in a stationary flow around a cylinder in a channel. The geometry, with a  slightly non-symmetric cylinder, is taken from \cite{SchaferTurek96}. However, we have chosen a two-times smaller viscosity (which yields a Reynolds number of $40$), in order to enlarge the recirculation zone behind the cylinder with respect to the original benchmark problem.

We consider four different configurations with different channel lengths, see Figure~\ref{fig:channels}. The figure also shows the streamlines of the flow, indicating the recirculation. Notice that the non-symmetry of the streamlines is due to the non-symmetry of the domain. The original length is chosen according to the 'rule of thumb'. The second computational domain is chosen in such a way that 
the recirculation zone is approximately cut in the middle. Clearly, there is an important part of the outflow boundary where the flow is entering the domain. The length of the two following domains is increased by 1 meter each. On the longer domain, we have $v_n^-=0$, such that our formulation coincides with the 'do-nothing' condition, but the velocity profile is still different from Poiseuille.

\begin{figure}[!ht]
\begin{center}
\subfigure[Original domain (22 m)]{\includegraphics[scale=0.35]{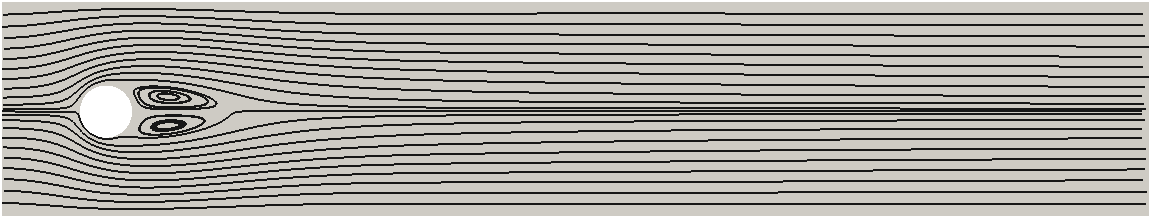}}\\
\subfigure[Length 4 m]{\includegraphics[scale=0.37]{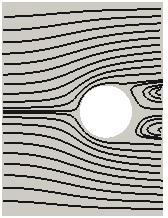}}\qquad
\subfigure[Length 5 m]{\includegraphics[scale=0.37]{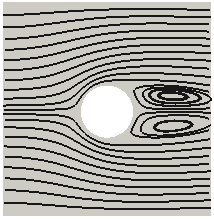}}\qquad
\subfigure[Length 6 m]{\includegraphics[scale=0.37]{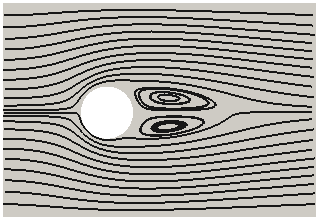}}
\caption{Tested geometries for the cylinder benchmark with computed streamlines}\label{fig:channels}
\end{center}
\end{figure}

In Figure~\ref{fig:streamlines} we compare the streamlines of the 'energy' boundary condition on the small domain with the solution on the whole domain. This zoom shows that, although the overall pictures look similar, the cut through the vertices clearly has an impact on the flow. 
Figure~\ref{fig:DoNothing}  shows a comparison between the 'energy' and the 'do nothing' conditions on the smallest domain. A closer look at the re-entrant part of the artificial boundary shows that with  the 'energy' condition, the streamlines (lines with circles) are perpendicular to the boundary as expected. This is not the case for the streamlines (lines with squares) resulting from the 'do nothing' condition.

\begin{figure}[!ht]
\begin{center}
\includegraphics[scale=0.38]{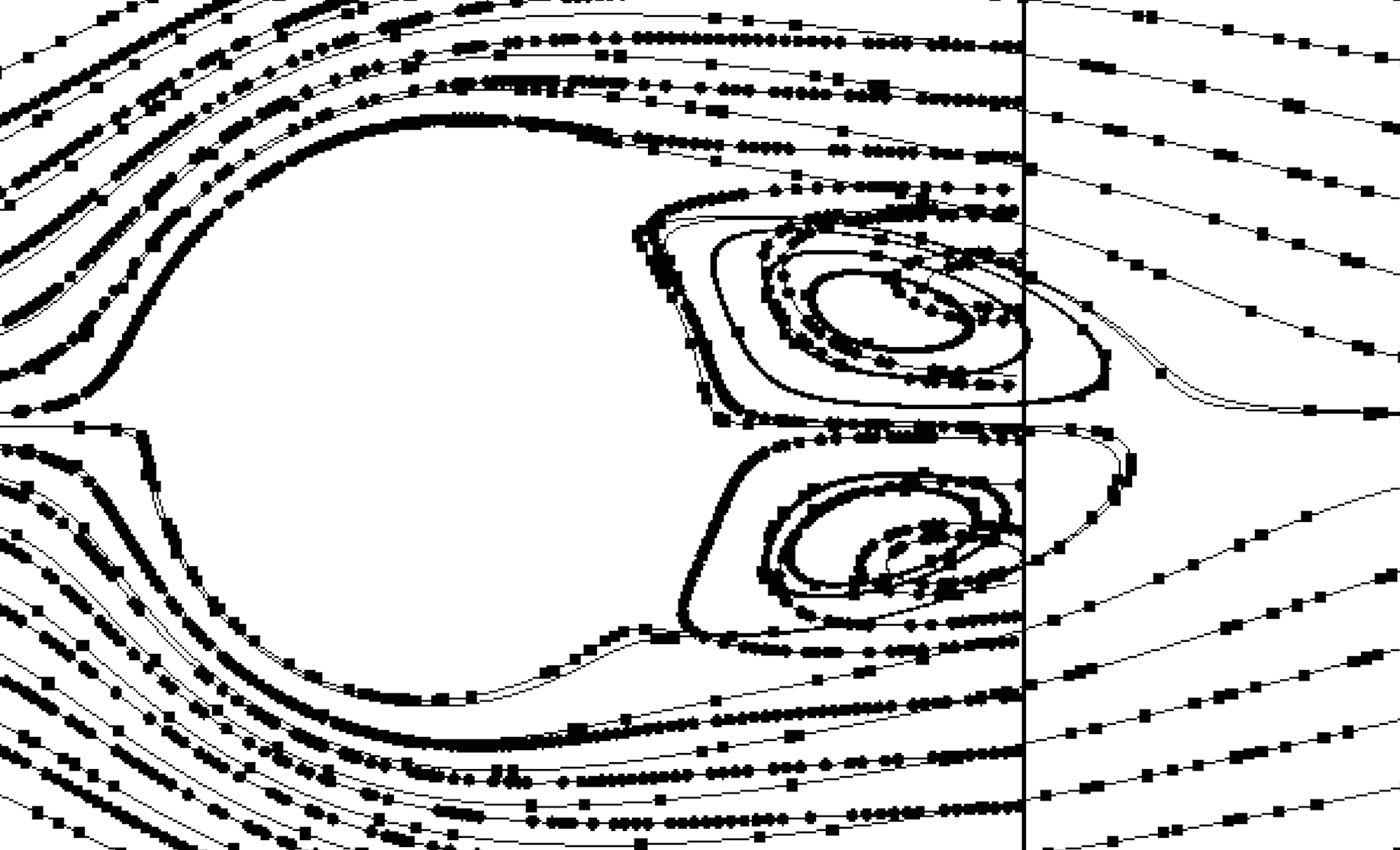}
\caption{Comparison of streamlines for 'energy' condition: smallest (\emph{circles}) and original domain (\emph{squares}).}\label{fig:streamlines}
\end{center}
\end{figure}
\begin{figure}[!ht]
\begin{center}
\subfigure[Streamlines]{\includegraphics[width=0.35\textwidth]{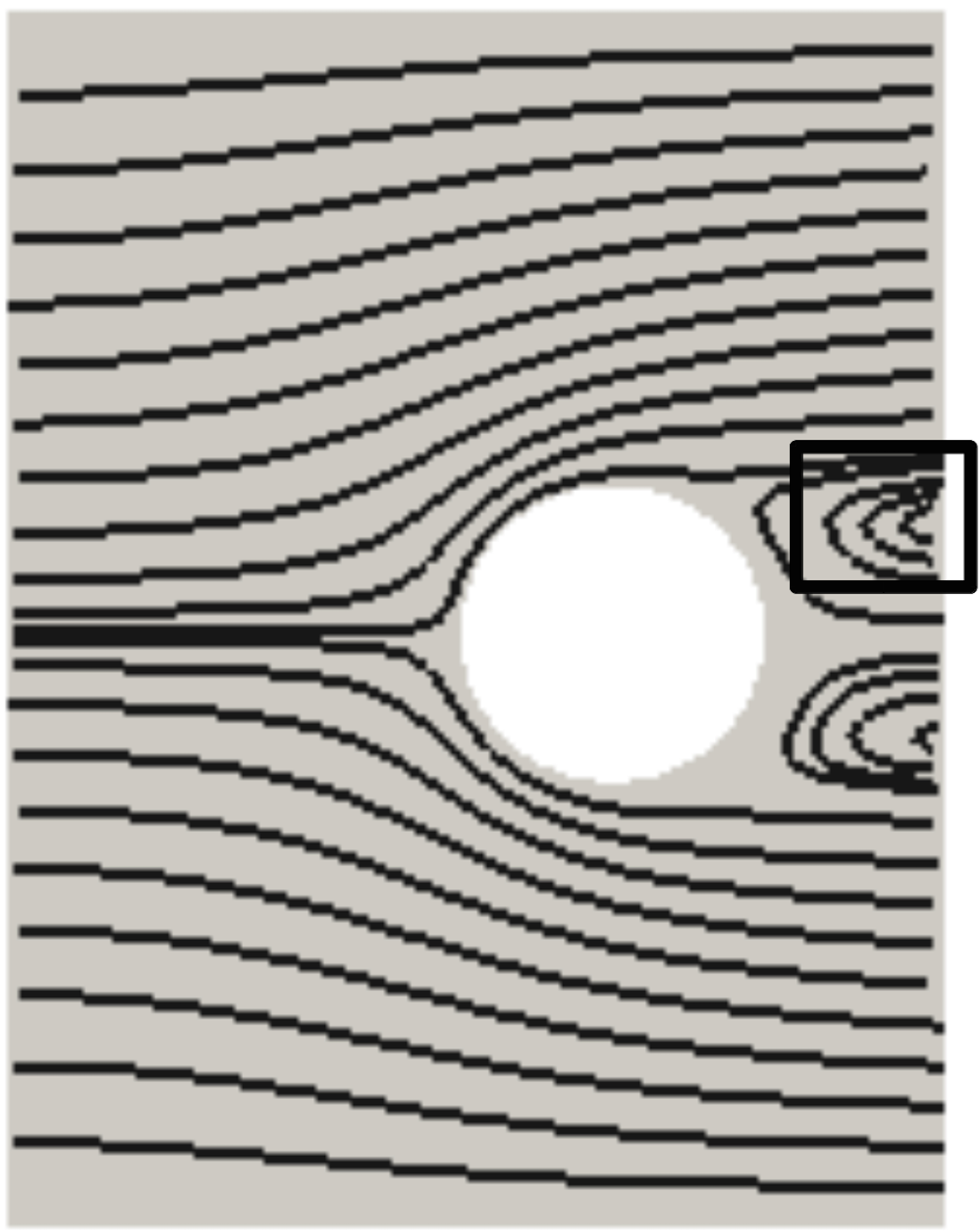}}\quad\qquad
\subfigure[Zoom]{\includegraphics[width=0.49\textwidth]{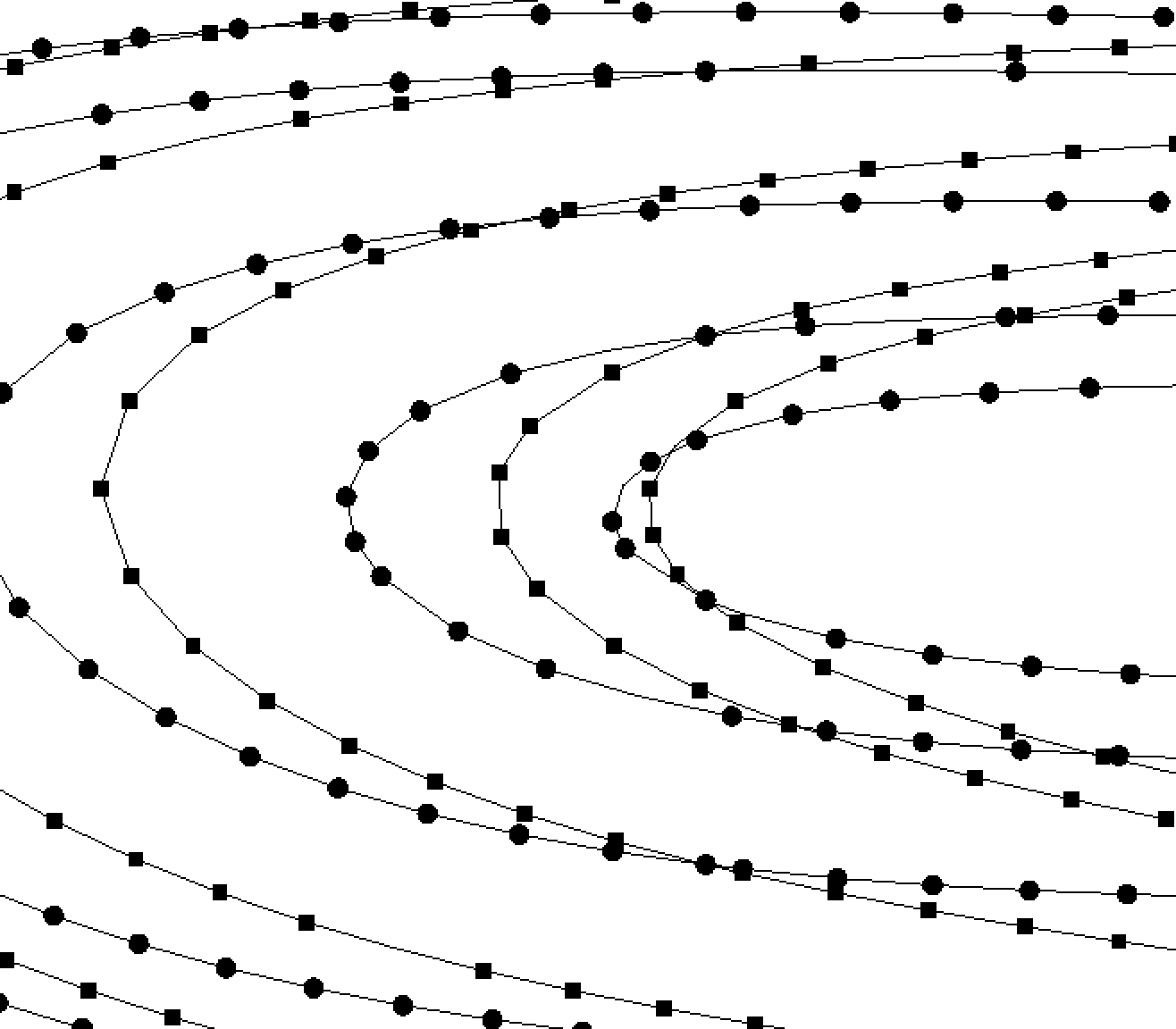}}
\caption{Comparison between 'energy' (\emph{circles}) and 'do-nothing' (\emph{squares}) conditions on the smallest domain}\label{fig:DoNothing}
\end{center}
\end{figure}
In order to evaluate the error induced by the boundary condition, we compute the drag coefficient of the cylinder.
The reference value, computed on a fine grid with the original geometry, is 4.0356. 
In Table~\ref{tab:drag}, we give the computed values on a sequence of globally refined meshes for each of the considered geometries. 
From the first column, we can see the expected second-order convergence starting from $h/4$. Already on the coarsest mesh with $160/96/80/64$ elements, the relative error is below $12 \%$ for all geometries.

Next we compare the errors due to the boundary conditions on the finest meshes. The relative error on the shortest domain is $5.8 \%$ and drops to 
$1.3 \%$ and $0.2 \%$ on the longer channels.

Finally, we compare the accuracy of our boundary condition with the 'do-nothing' condition. To this end, Table~\ref{tab:drag} contains the computed values of the drag coefficients for the latter one. It turns out that the relative error for the 'do-nothing' condition is slightly larger than ours.
\begin{table}[h!]
\begin{center}
\begin{tabular}{r|r|r|r|r||r}
	& $l=22$ & $l=6$ & $l=5$ & $l=4$ & $l=4$ 'do-nothing'\\\hline 
   $h$     &   4.492213  & 4.493186  & 4.476176 & 4.374147 & 4.370849\\
   $h/2$   &   3.996568  & 4.002665  & 3.949923 & 3.786517 & 3.777821\\
   $h/4$   &   4.021505  & 4.029468  & 3.968233 & 3.790646 & 3.778295\\
   $h/8$   &   4.032634  & 4.041017  & 3.978145 & 3.797552 & 3.784410\\
   $h/16$  &   4.034720  & 4.043192  & 3.979928 & 3.798635 & 3.785252\\
   $h/32$  &   4.035179  & 4.043660  & 3.980291 & 3.798886 & 3.785401\\\hline
 Error  & - &  \textbf{0.2 \%} &  \textbf{1.3} \% & \textbf{5.8} \%& \textbf{6.2} \%
\end{tabular}
\end{center}
\caption{Drag coefficients for different channel length $l$ and mesh resolution $h$ and relative error on finest mesh}
\label{tab:drag}
\end{table}

\subsection{Inviscid flow}\label{subsec:InviscidFlow}
%
In this subsection we consider two test problems for inviscid flows with analytical solutions. The first one investigates the numerical dissipation leading to a decrease in kinetic energy. The second one is a well-known analytical solution of the Euler equations with vortices. 
%
\subsubsection{Standing vortex}\label{subsubsec:}
%
The computational domain is $\Omega=]-1,+1[^2$ and the stationary solution is given in polar coordinates by $v_{r}=0$ and $v_{\theta}=0$ for $r>0.8$, $v_{\theta}=2-2.5r$ for $0.4\le r\le 0.8$ and $v_{\theta}=2.5r$ for $r<0.4$. Starting with the nodal interpolation of this velocity field as initial condition (see Figure~\ref{fig:StandingVortex1}(a)), we solve the discrete equations up to $t = 5s$ on a sequence of uniform meshes with number of nodes $N=256,1024,4096,16384$ (denoted by mesh1,..., mesh4 in the following). 
\begin{figure}[!ht]
\begin{center}
\subfigure[$t=0s$]{\includegraphics[scale=0.27]{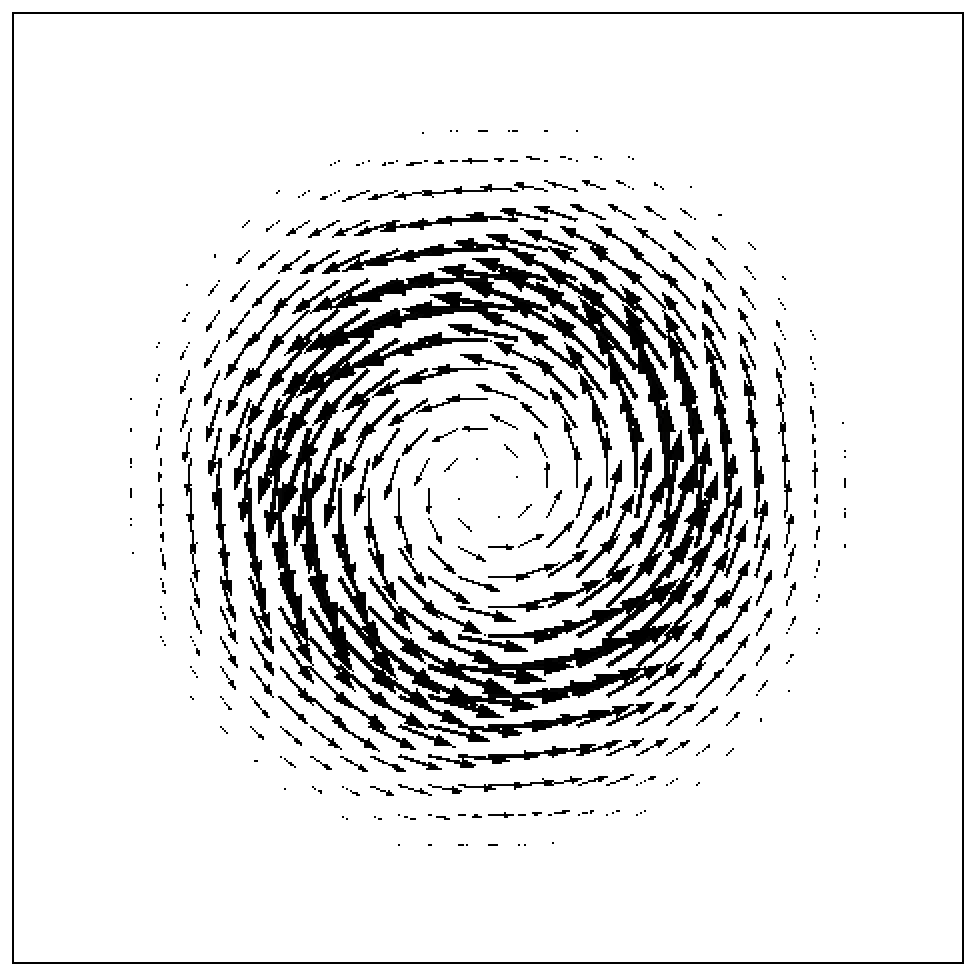}}\quad
\subfigure[$t=5s$]{\includegraphics[scale=0.27]{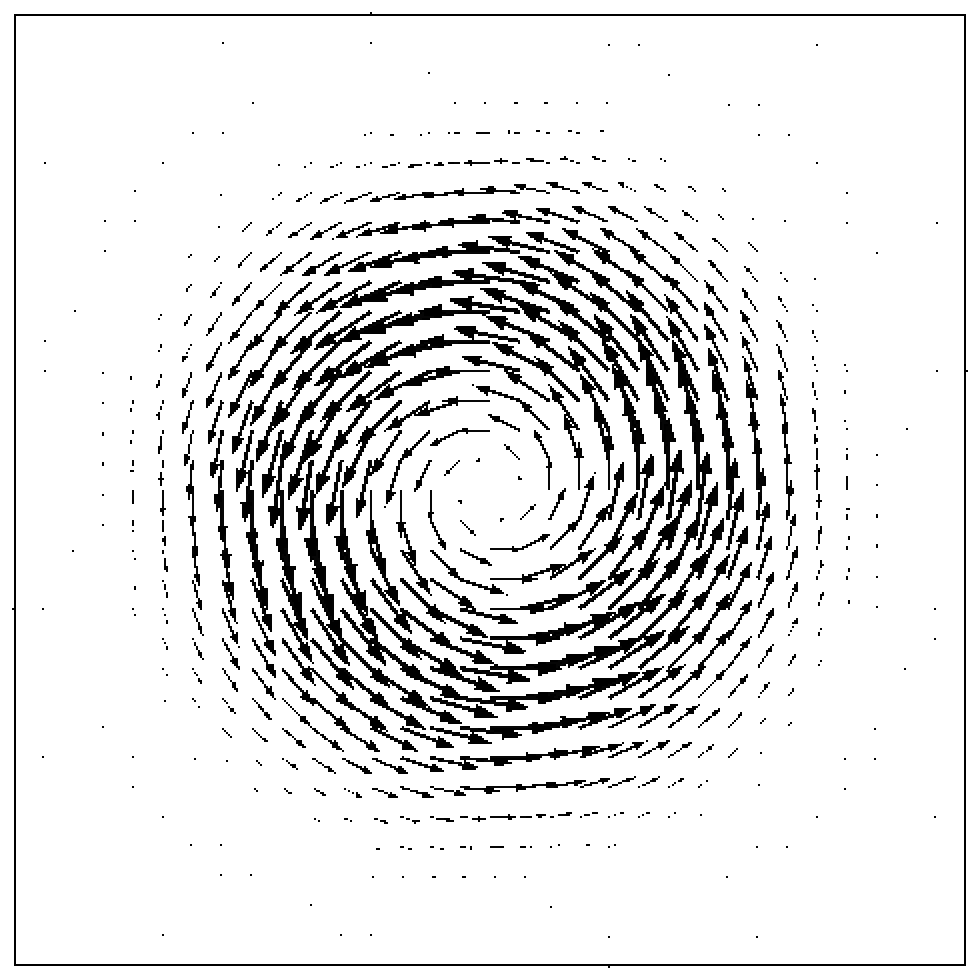}}
\caption{Standing vortex problem on a uniform mesh with $N=1024$ nodes: velocity}\label{fig:StandingVortex1}
\end{center}
\end{figure}
The numerical dissipation due to the boundary and interior stabilization terms leads to the creation of artificial vortices with very low energy. Since they cannot be detected by the vector fields, we use streamlines to visualize this effect in Figure~\ref{fig:StandingVortexSL}.
\begin{figure}[!ht]
\begin{center}
\subfigure[$t=0s$]{\includegraphics[scale=0.23]{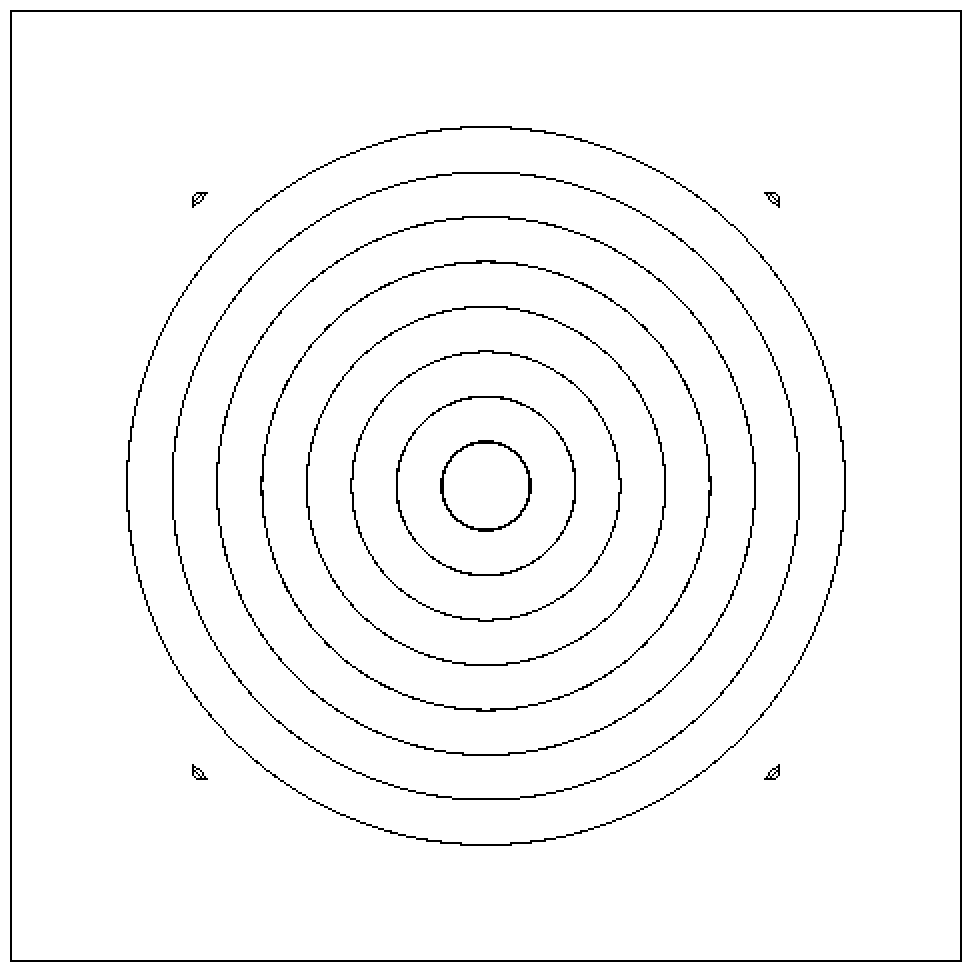}}\,
\subfigure[$t=2.5s$]{\includegraphics[scale=0.23]{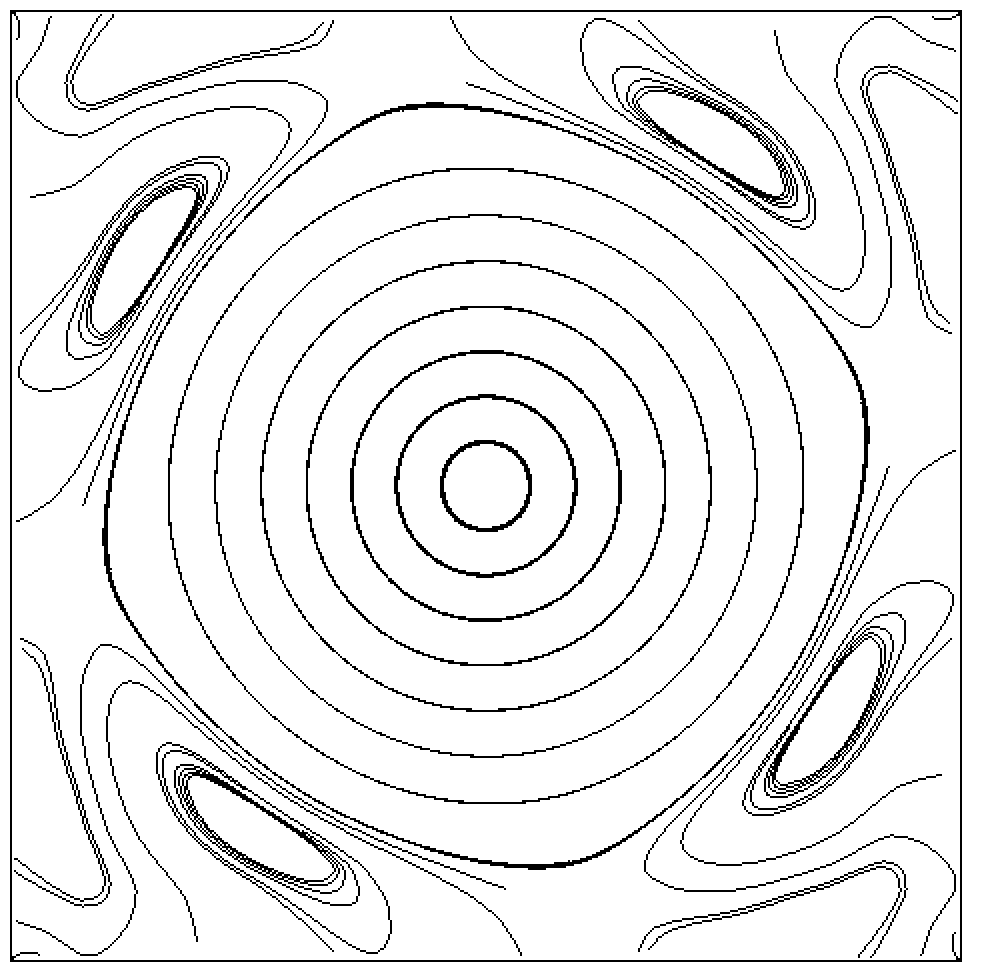}}\,
\subfigure[$t=5s$]{\includegraphics[scale=0.23]{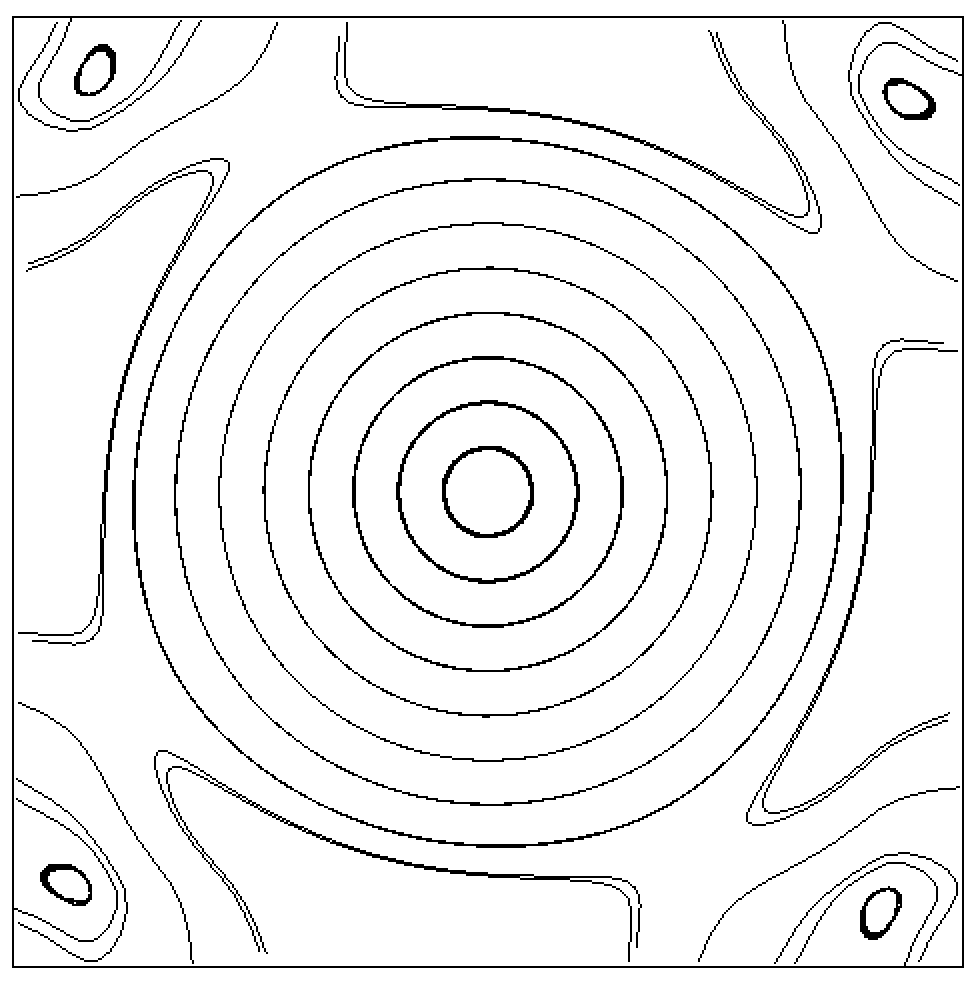}}
\caption{Standing vortex problem on a uniform mesh with $N=1024$ nodes: streamlines}\label{fig:StandingVortexSL}
\end{center}
\end{figure}
The temporal behavior of the kinetic energy, as well as the numerical dissipation due to the boundary stabilization and interior stabilization terms are shown in Figure~\ref{fig:StandingVortexTime}. 
The loss in kinetic energy on the four meshes is: $42\%$, $11\%$, $2\%$, $0.4\%$. This shows an empirical convergence order between two and three.

From Figure~\ref{fig:StandingVortexTime}(c) it becomes clear that the SUPG-stabilization is dominant. The ratio between boundary and interior stabilization at $t=5s$ for the sequence of meshes is: $25$, $221$, $876$, $880$. 
\begin{figure}[!ht]
\begin{center}
\subfigure[Kinetic energy.]{\includegraphics[scale=0.37]{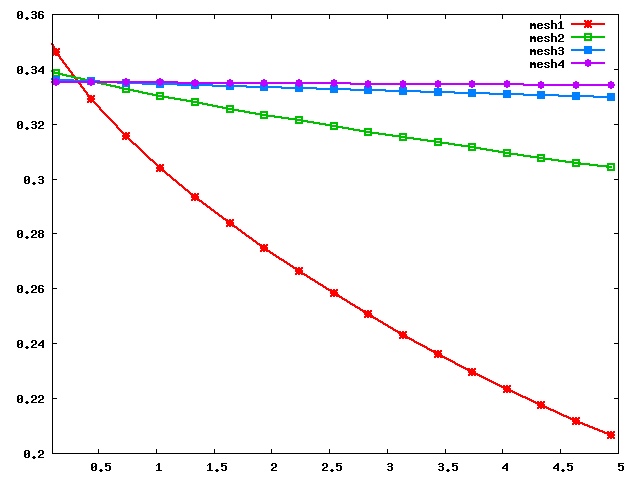}}
\subfigure[Numerical dissipation (boundary).]{\includegraphics[scale=0.37]{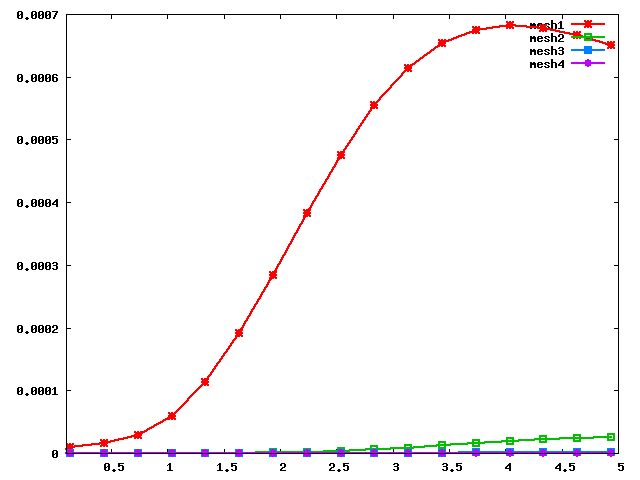}}
\subfigure[Numerical dissipation (stabilization).]{\includegraphics[scale=0.37]{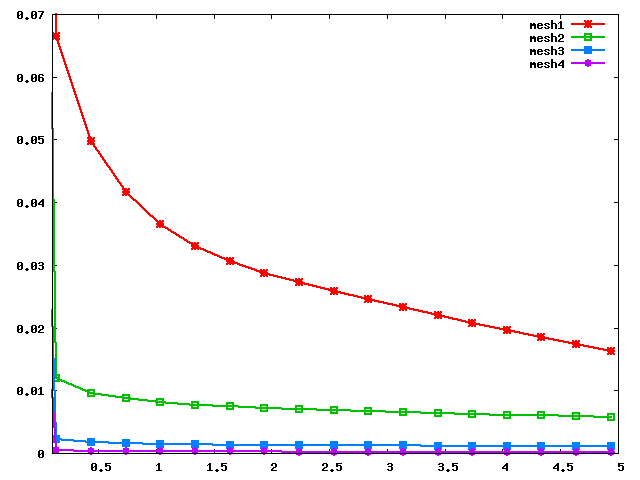}}
\caption{Standing vortex problem on a sequence of meshes.}\label{fig:StandingVortexTime}
\end{center}
\end{figure}

%
\subsubsection{Fraenkel flow}\label{subsubsec:}
%
%
\begin{figure}[!ht]
\begin{center}
\subfigure[Image from \cite{Fraenkel61}]{\includegraphics[scale=0.22,angle=-1]{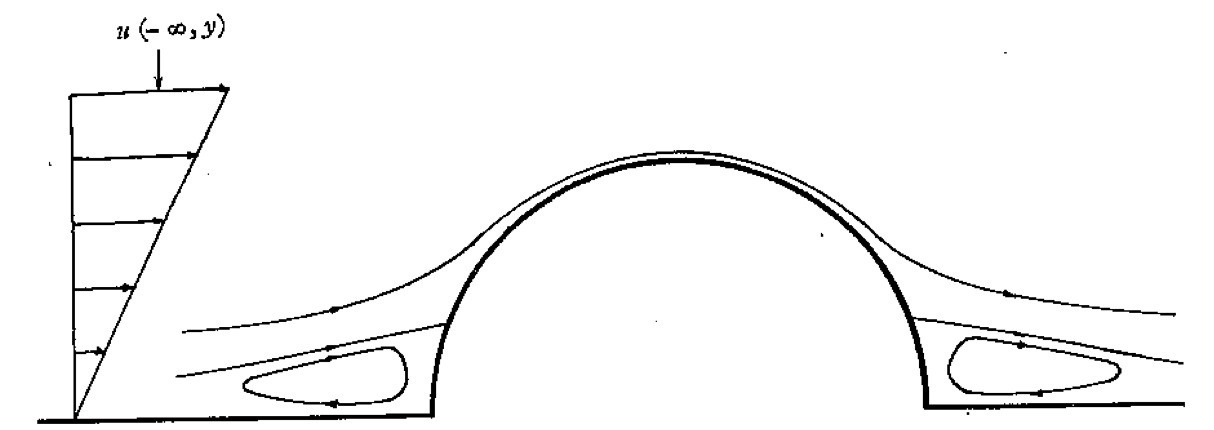}}
\subfigure[$t\approx0.01s$]{\includegraphics[scale=0.12,clip=true, trim=0 21mm 0 0]{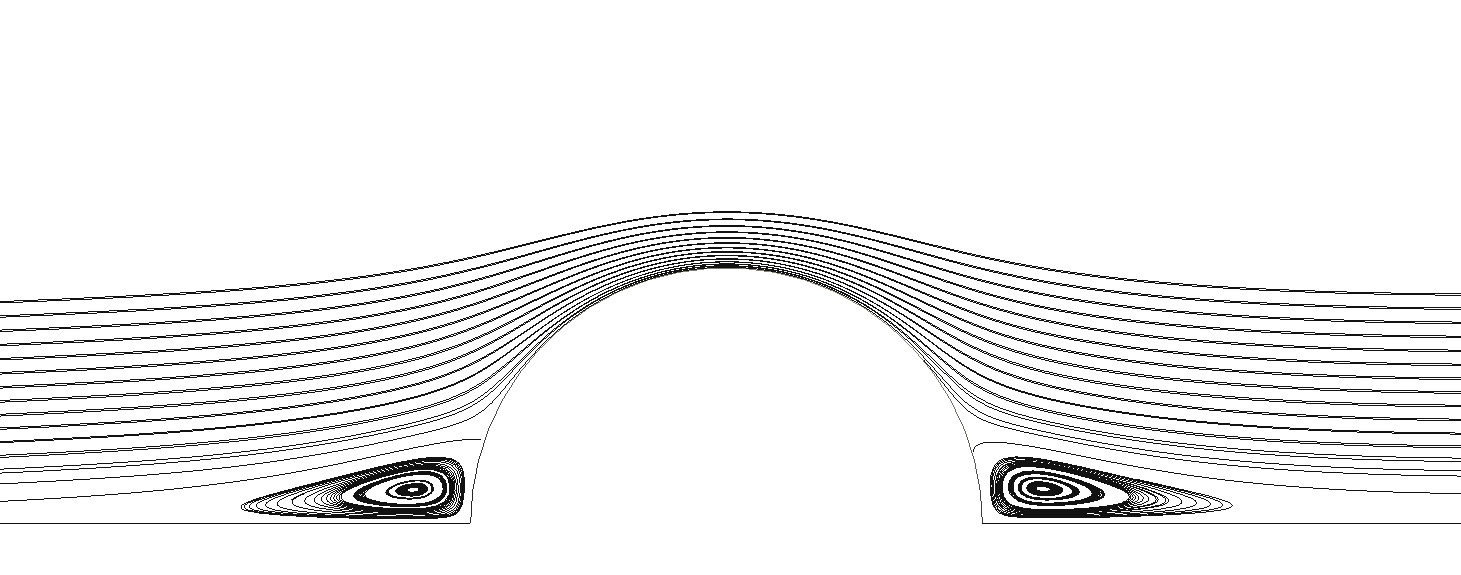}}
\caption{Comparison between Fraenkel's analytical solution and computation.}\label{fig:Fraenkel}
\end{center}
\end{figure}
As another example for inviscid flow computation, we consider the analytical stationary solution of a rotational flow given by Fraenkel in \cite{Fraenkel61}. This solution with a linear far field velocity profile satisfies the wall condition on the lower part of the boundary and presents two symmetric vortices before and behind the half-cylinder, see Figure~\ref{fig:Fraenkel}. It has for example been used as a test-case in \cite{FeistauerKuvcera07}.
\begin{figure}[!ht]
\begin{center}
\subfigure[$t\approx0.01s$]{\includegraphics[scale=0.12]{img/Fraenkel/fraenkel-001}}\,\,
\subfigure[$t\approx1s$]{\includegraphics[scale=0.12]{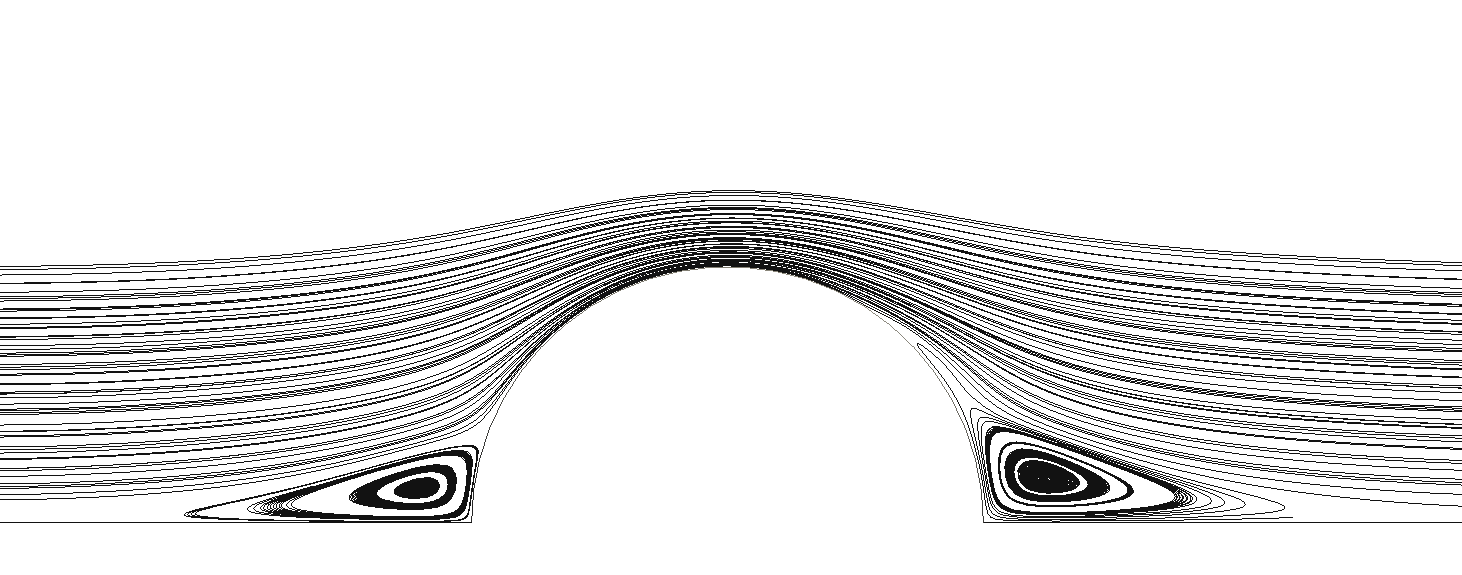}}
\subfigure[$t\approx2s$]{\includegraphics[scale=0.12]{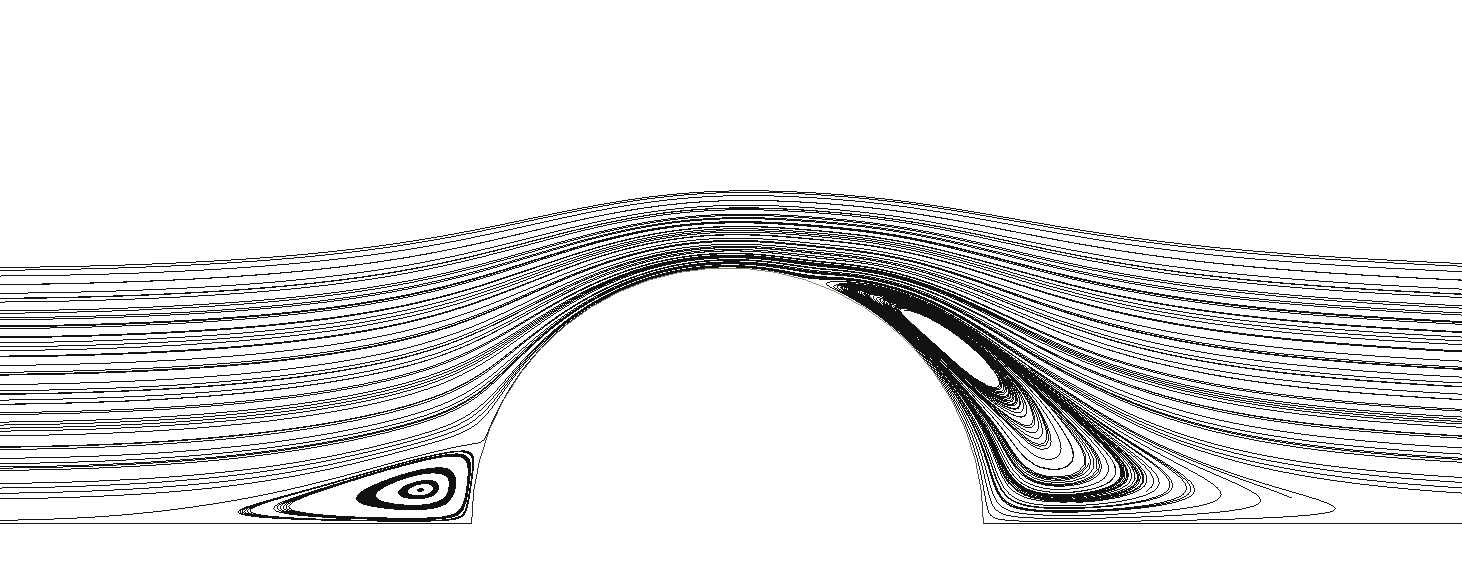}}\,\,
\subfigure[$t\approx4s$]{\includegraphics[scale=0.12]{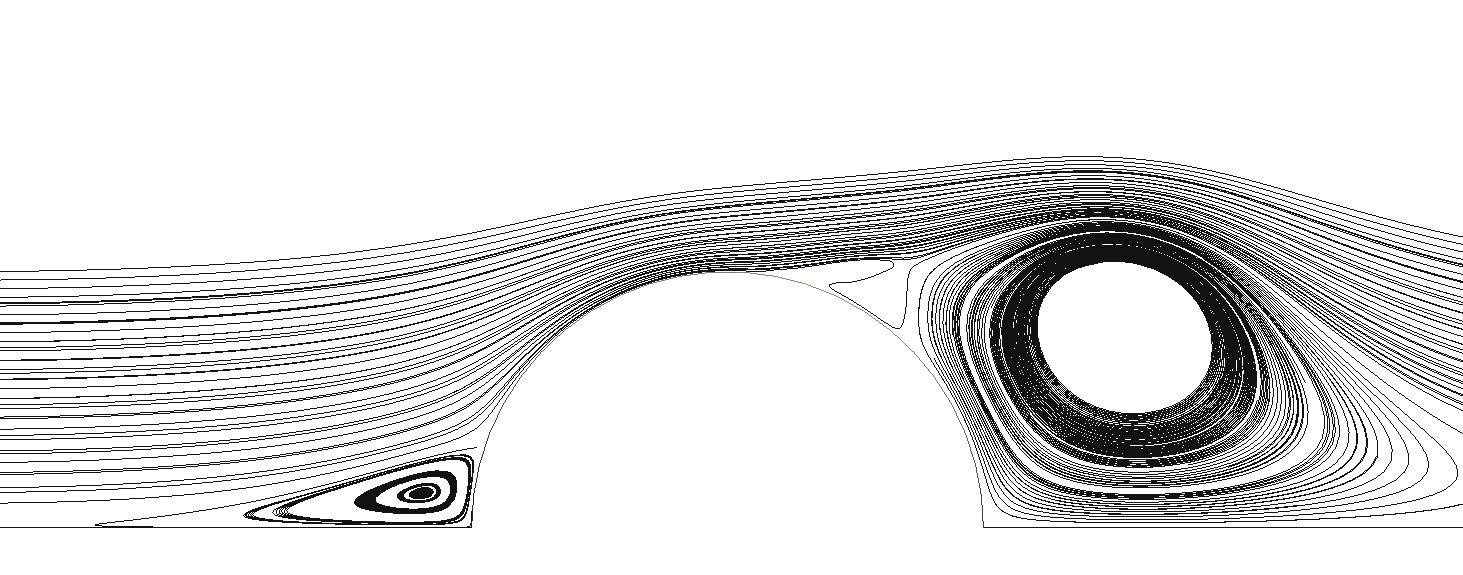}}
\caption{Instability of Fraenkel's analytical solution for inviscid flow: streamlines}\label{fig:FraenkelTime}
\end{center}
\end{figure}
We use the computational domain $\Omega=]-3,3[\times]0,3[ \setminus\{x_1^2+x_2^2\le 1\}$. The inflow condition with $\vD=(x_2,0)$ is given at $x_1=-3$, the outflow condition is used at $x_1=3$ and the remaining parts of the boundary are treated as solid walls. Figure~\ref{fig:FraenkelTime} shows some typical numerical results. 
Starting from rest, we retrieve the analytical solution after a short time $t\approx 0.01s$. 
However, starting at $t\approx1s$ a detachment appears behind the cylinder and the solution loses its symmetry and develops other vortices, as one can see in Figure \ref{fig:FraenkelTime} (c) and (d). We have conducted several numerical experiments using different mesh resolutions, larger computational domains, and different time schemes. The results were always quite similar to those reported here. We conclude that the analytical solution is not stable.
As a further illustration, we consider the computation of the flow around a cylinder by reflection of the domain $\Omega$. Noting that the analytical solution can be prolongated by reflection, we impose the inflow condition $\vD=(|x_2|,0)$. As shown in Figure~\ref{fig:FraenkelSymTime}, the flow loses the symmetry in $x_1$ at about $t\approx 1s$ as before. The symmetry in $x_2$ is broken at $t\approx100s$.
\begin{figure}[!ht]
\begin{center}
\subfigure[$t\approx0.01s$]{\includegraphics[scale=0.12]{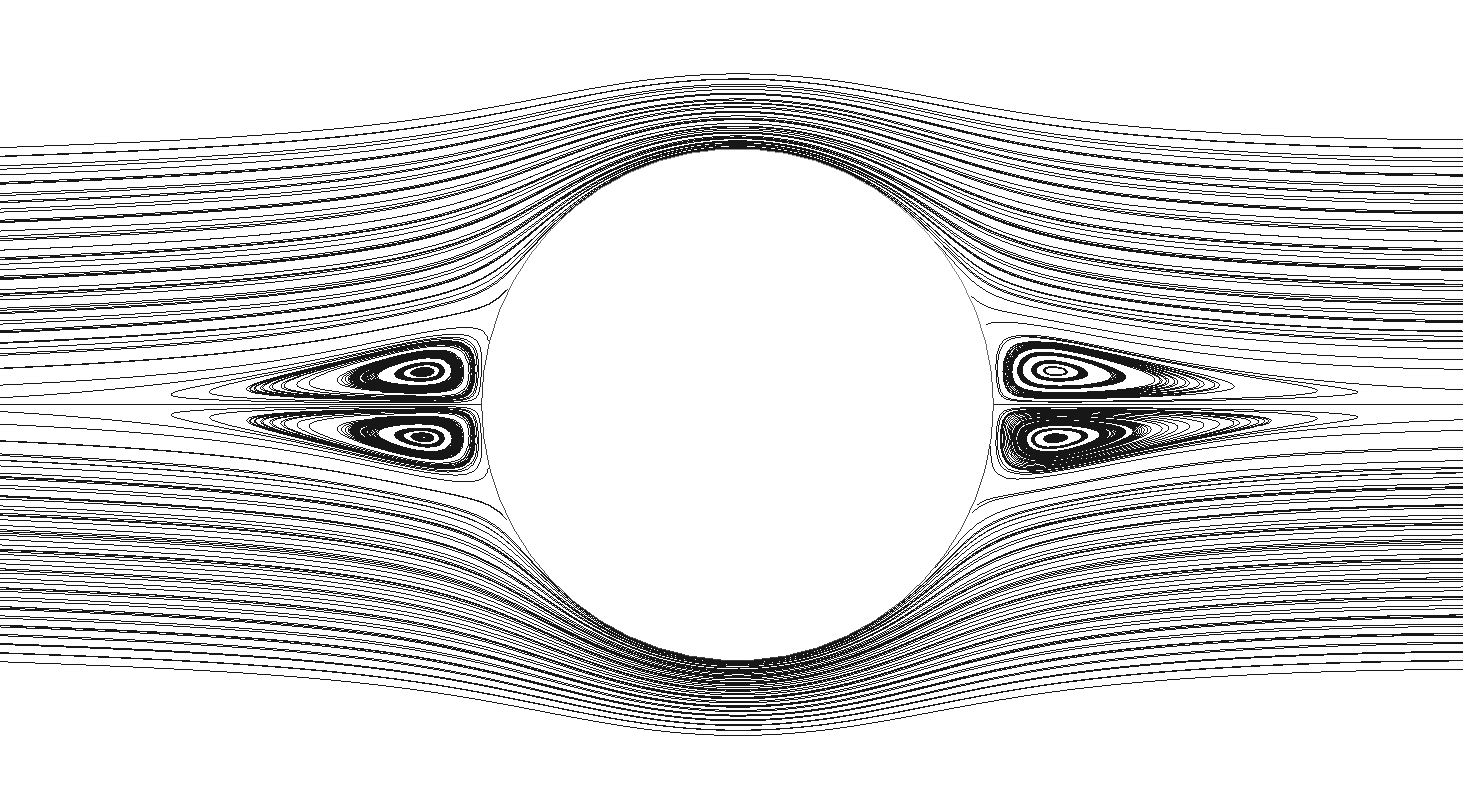}}\,\,
\subfigure[$t\approx1s$]{\includegraphics[scale=0.12]{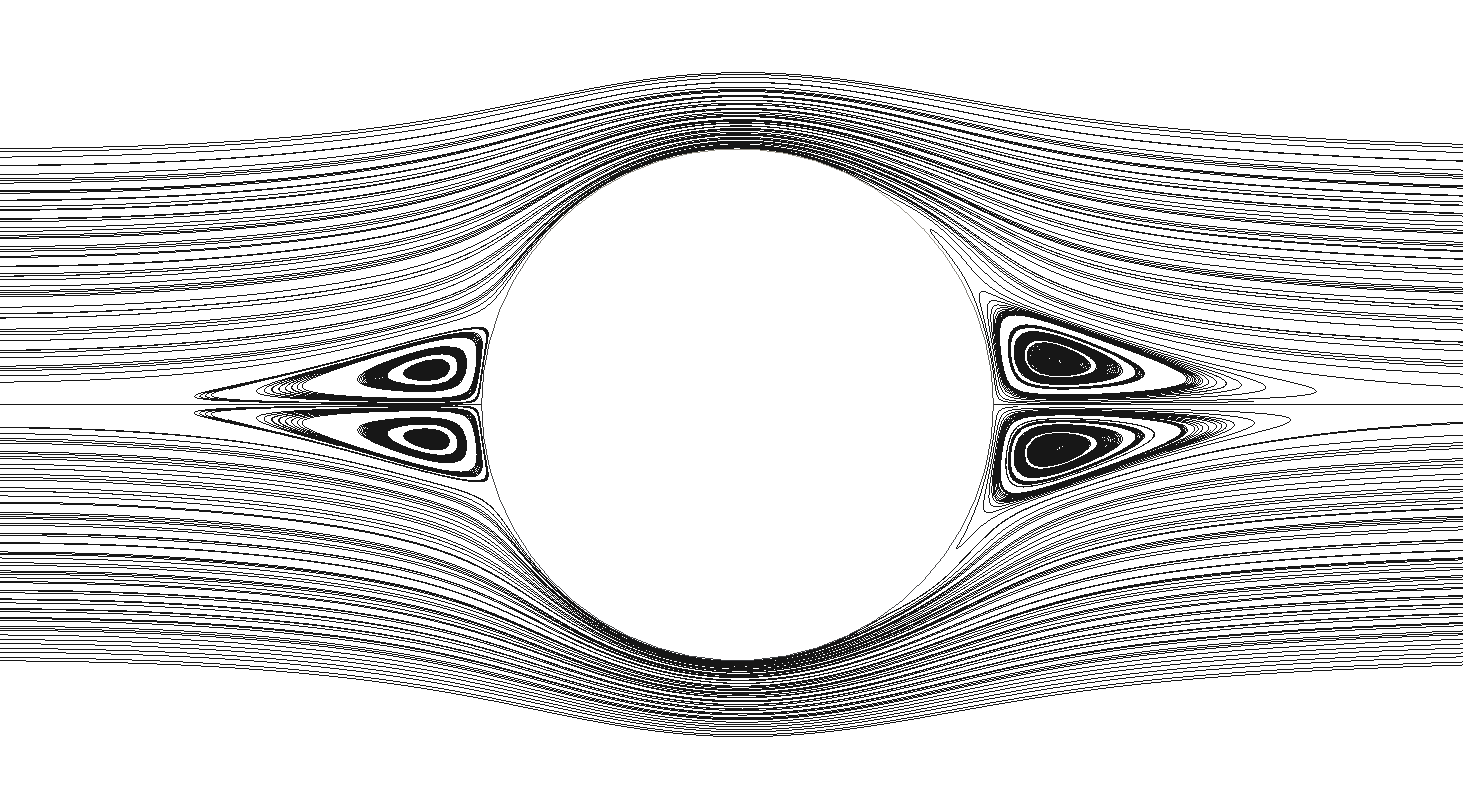}}
\subfigure[$t\approx2s$]{\includegraphics[scale=0.12]{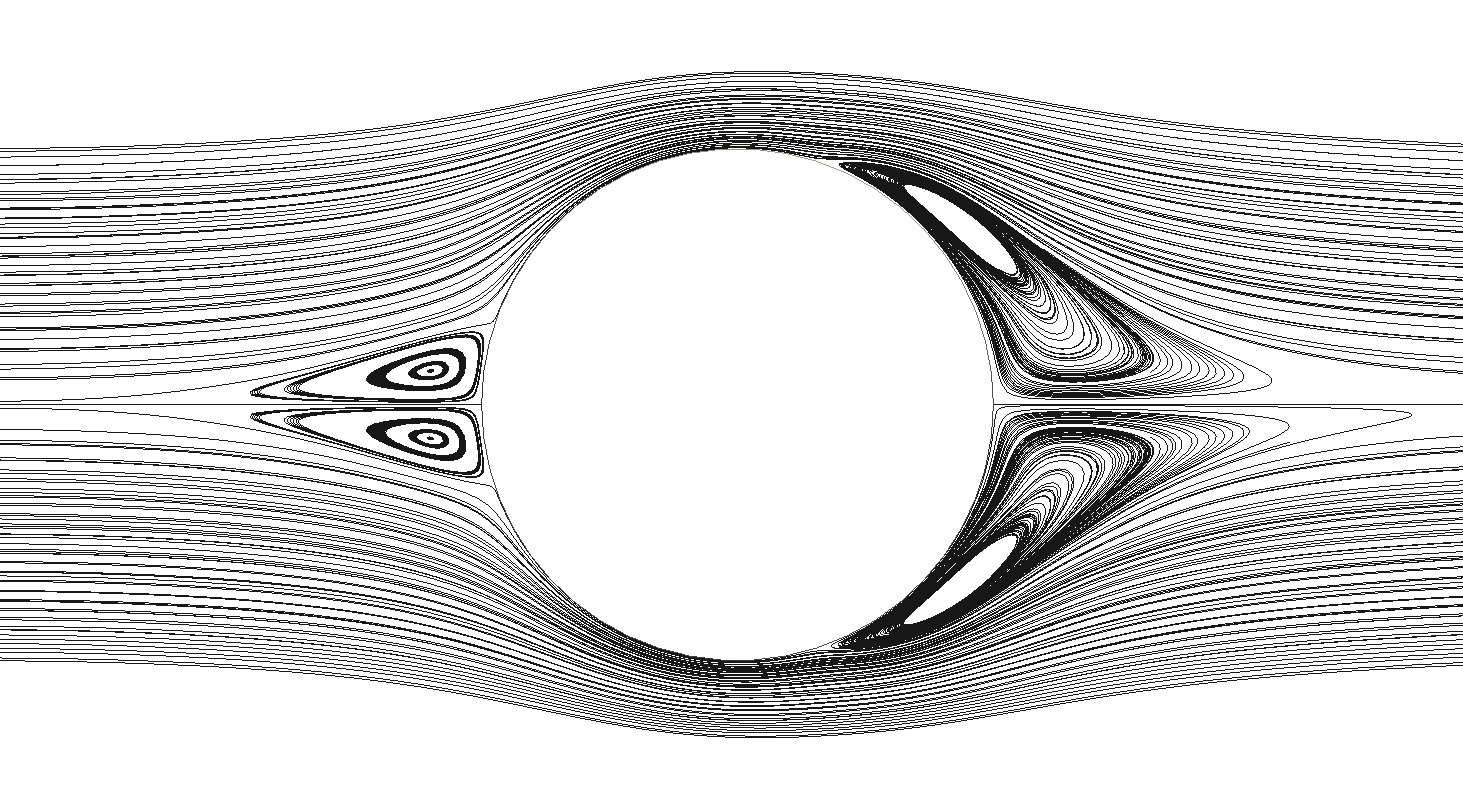}}\,\,
\subfigure[$t\approx4s$]{\includegraphics[scale=0.12]{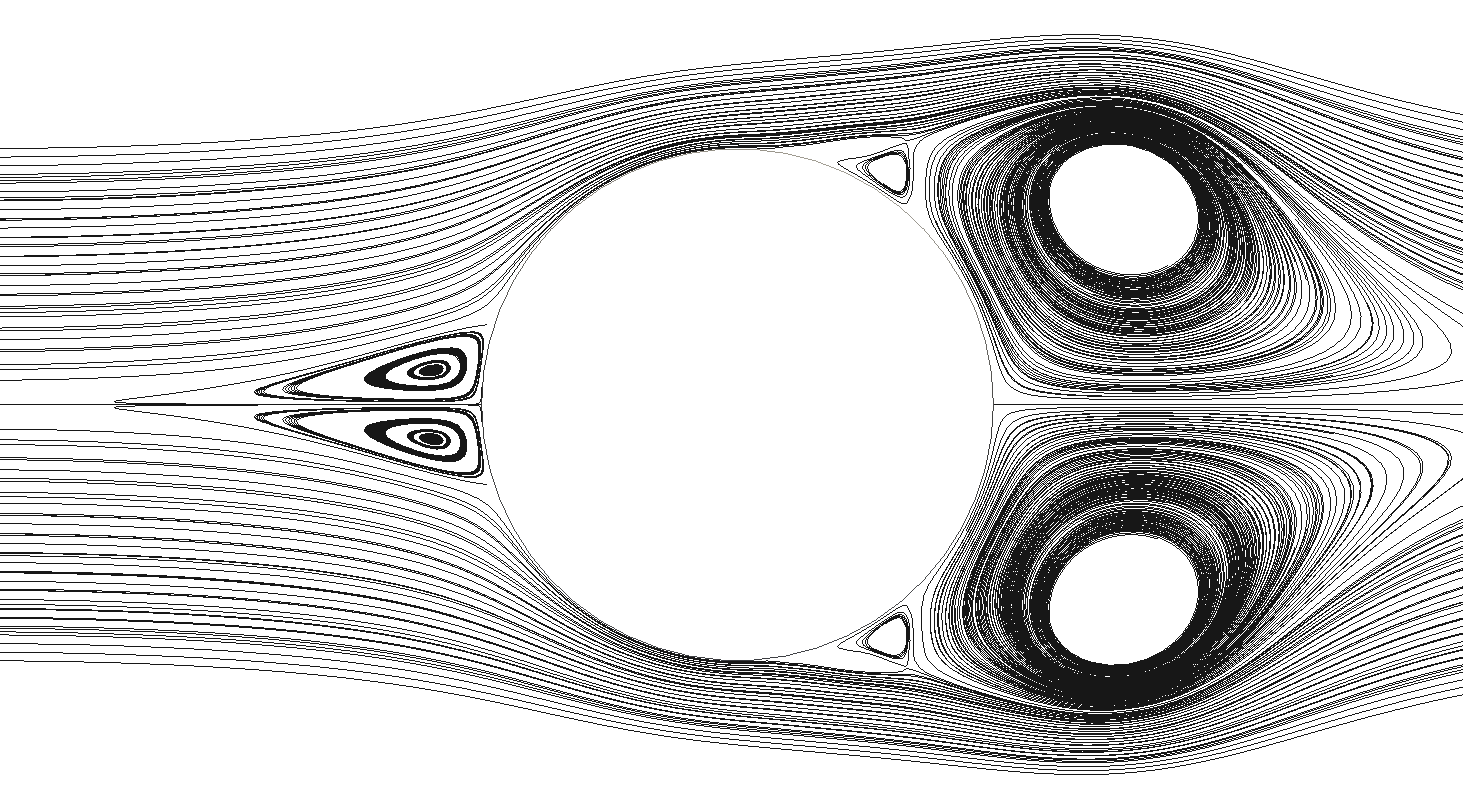}}
\subfigure[$t\approx110s$]{\includegraphics[scale=0.12]{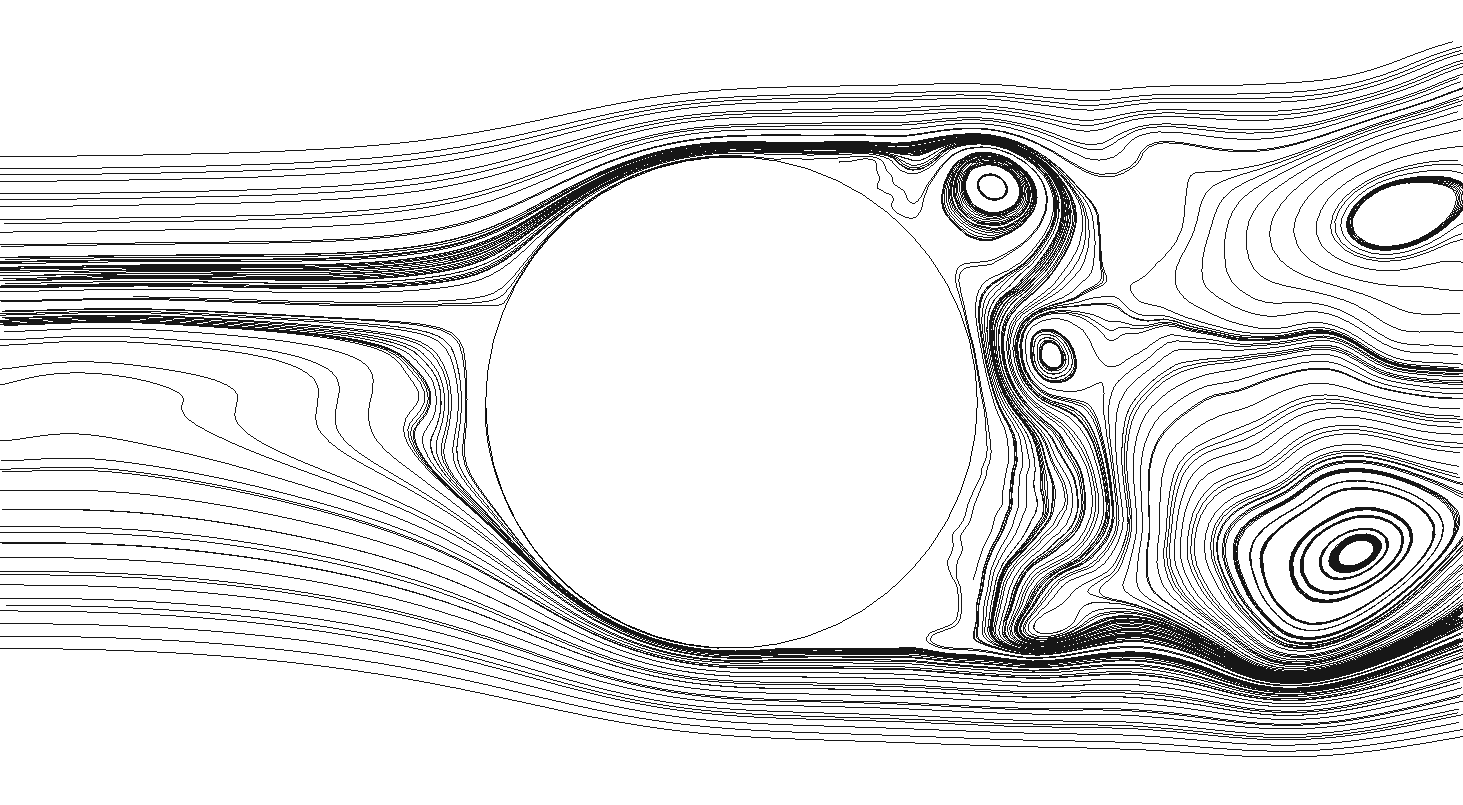}}
\subfigure[$t\approx120s$]{\includegraphics[scale=0.12]{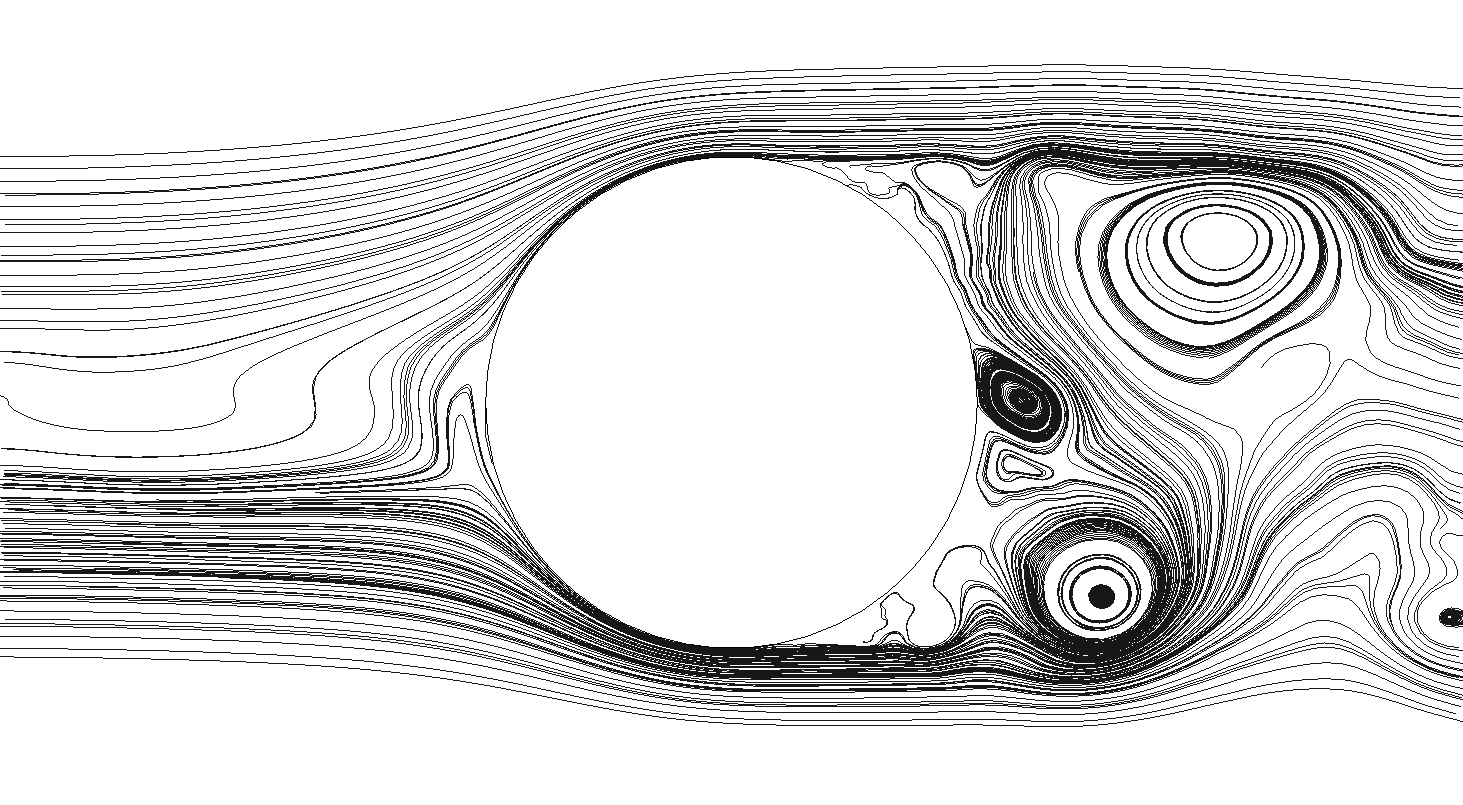}}
\caption{Fraenkel flow: solution in the whole domain.}\label{fig:FraenkelSymTime}
\end{center}
\end{figure}
%
%
\subsubsection{Jet impact problem}\label{subsubsec:Jet}
%
Here we consider the impact of an inviscid jet in order to illustrate the influence of the proposed boundary stabilization and the importance of the scaling of the absolute value matrix. The geometry is a rotated 'T' of the precise form $\Omega=]0; 2[\times]0; 2[\times[0; 1] \times [1; 2] \cup [0; 1] \times [1; 2]$. Starting from an inert state, the jet enters with a flat profile $(3-2t)t^2$ from the left, hits a vertical wall and leaves the domain through an upper and a lower channel, as shown in Figure~\ref{fig:TshirtFlowa}, which is taken at $t = 3s$; due  to symmetry, we only show the solutions on half of the domain.
\paragraph{Comparison with the alternative discretization}

We impose the inflow condition on $\GammaIn = {0}\times]1; 2[$, the outflow condition on $\GammaOut =]1; 2[\times{1}\cup]1; 2[\times{2}$ and the wall condition on the remaining parts of the boundary $\GammaWall = \partial\Omega \setminus (\GammaIn \cup \GammaOut)$. The flow develops two symmetric large vertices (the upper one is shown in Figure~\ref{fig:TshirtFlowa}), which break up into a series of smaller ones and stays non-stationary. Figures~\ref{fig:TshirtFlowb} and \ref{fig:TshirtFlowc} show the velocity magnitude computed with strong and weak enforcement of the normal velocity, respectively. For the strong implementation we can detect oscillations on the upper wall of the boundary, which propagate into the interior of the domain. For the weak implementation, oscillations behind the re-entrant corner are also visible, but clearly more localized. Notice that both methods employ the same SUPG stabilization, and that no shock-capturing terms are used. 
\begin{figure}[h!]
\begin{center}
\subfigure[Flow pattern.]{\includegraphics[width=0.33\textwidth]{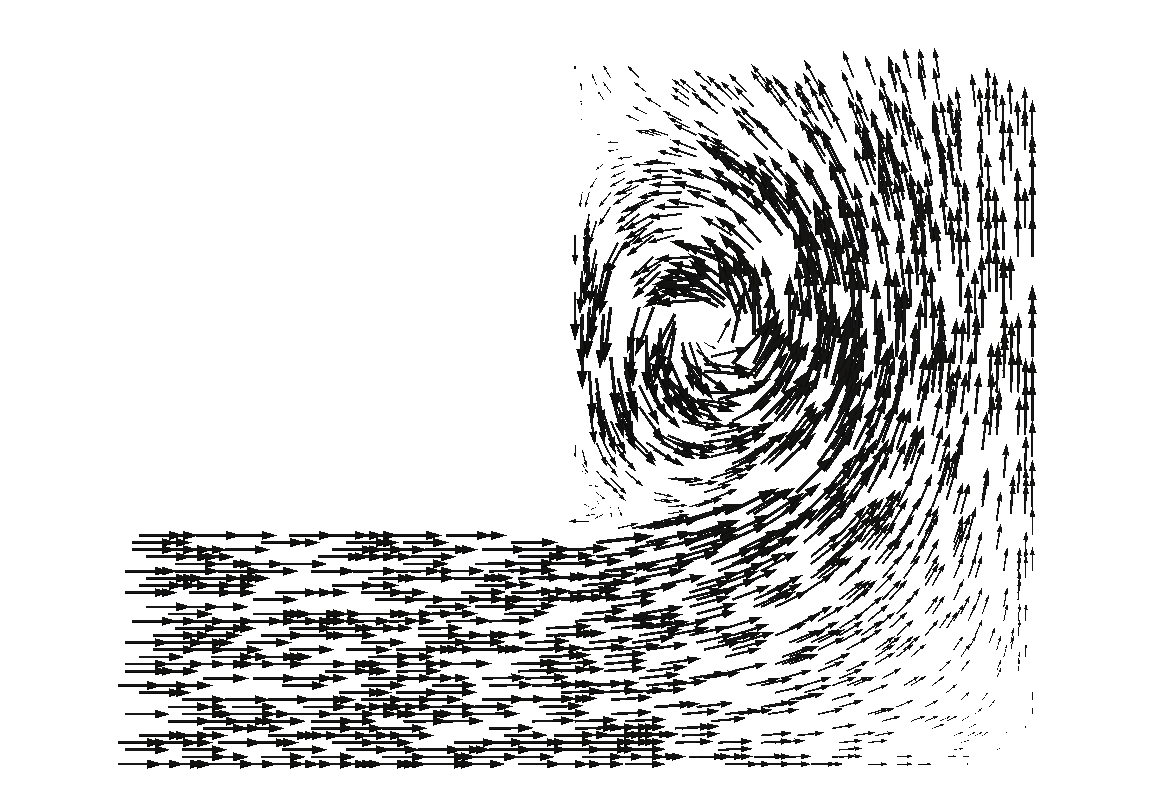}\label{fig:TshirtFlowa}}
\subfigure[Magnitude of $v_1$ (strong).]{\includegraphics[width=0.31\textwidth]{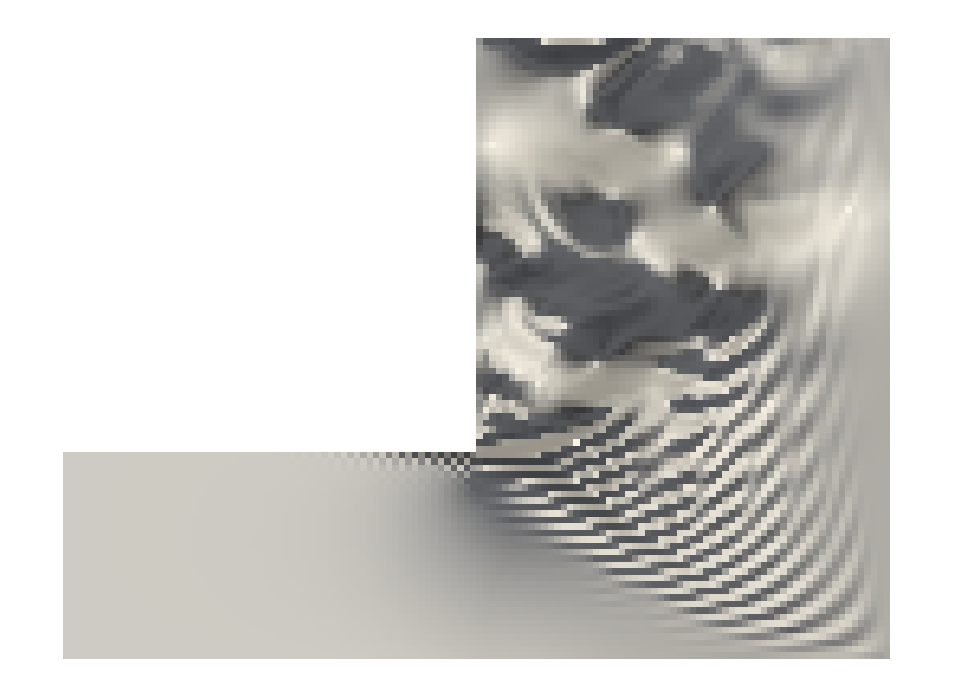}\label{fig:TshirtFlowb}}
\subfigure[Magnitude of $v_1$ (weak).]{\includegraphics[width=0.31\textwidth]{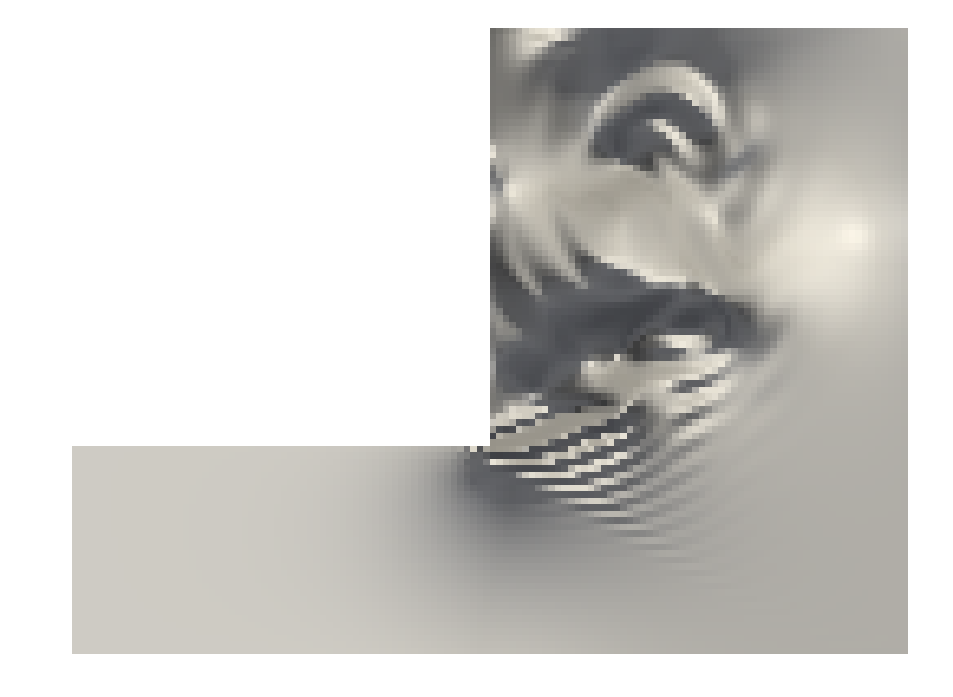}\label{fig:TshirtFlowc}}
\caption{Jet impact problem: strong vs. weak boundary conditions (half domain).}\label{fig:TshirtFlow}
\end{center}
\end{figure}
\paragraph{Influence of the scaling}
We use the same test case to investigate the behavior of the discrete solutions under scaling. Here, we replace the inflow and outflow boundary conditions by the characteristic one with the same $\vD$ as before on $\GammaIn$, $\vD=0$ on $\GammaOut$ and a pressure difference equal to $1$, i.e. $\pD = 1$ on $\GammaIn$ and $\pD = 0$ on $\GammaOut$. To this end, we compute the problem in the scaled domain $\widetilde\Omega= s\Omega$ with $s = 10$ and prescribe $\widetilde{v}^D = s\vD$ and $\widetilde{p}^D = s^2\pD$, such that the continuous solution verifies $\widetilde{v}= s{v}$ and $\widetilde{p} = s^2{p}$. The scaling of the domain implies $\widetilde{d}_h=s d_h$.
\begin{figure}[h!]
\begin{center}
\includegraphics[width=0.9\textwidth]{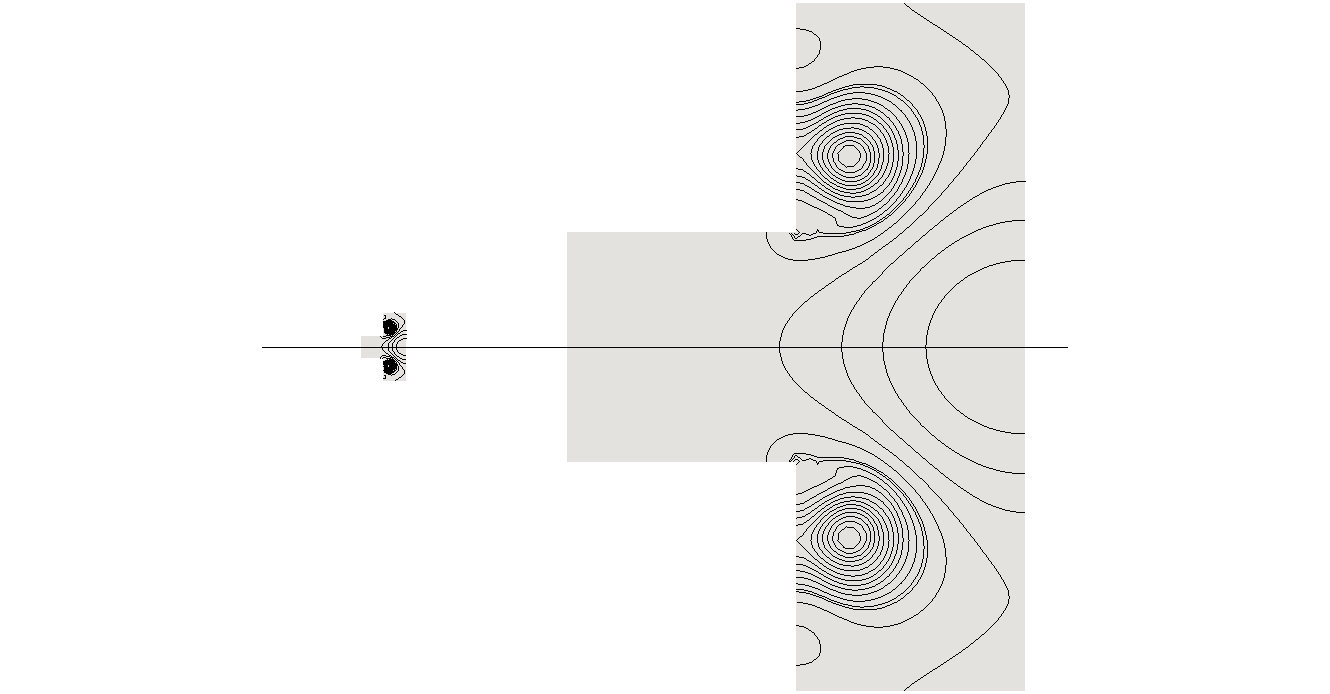}\label{fig:TshirtDomains}
\caption{Jet impact problem: pressure isolines on $\Omega$ and $\widetilde\Omega$.}
\end{center}
\end{figure}
For the sake of clarity, we show in Figure~\ref{fig:TshirtDomains} the two computational domains and the pressure isolines obtained with the proposed method.
\begin{figure}[h!]
\begin{center}
\subfigure[Proposed method.]{\includegraphics[width=0.43\textwidth]{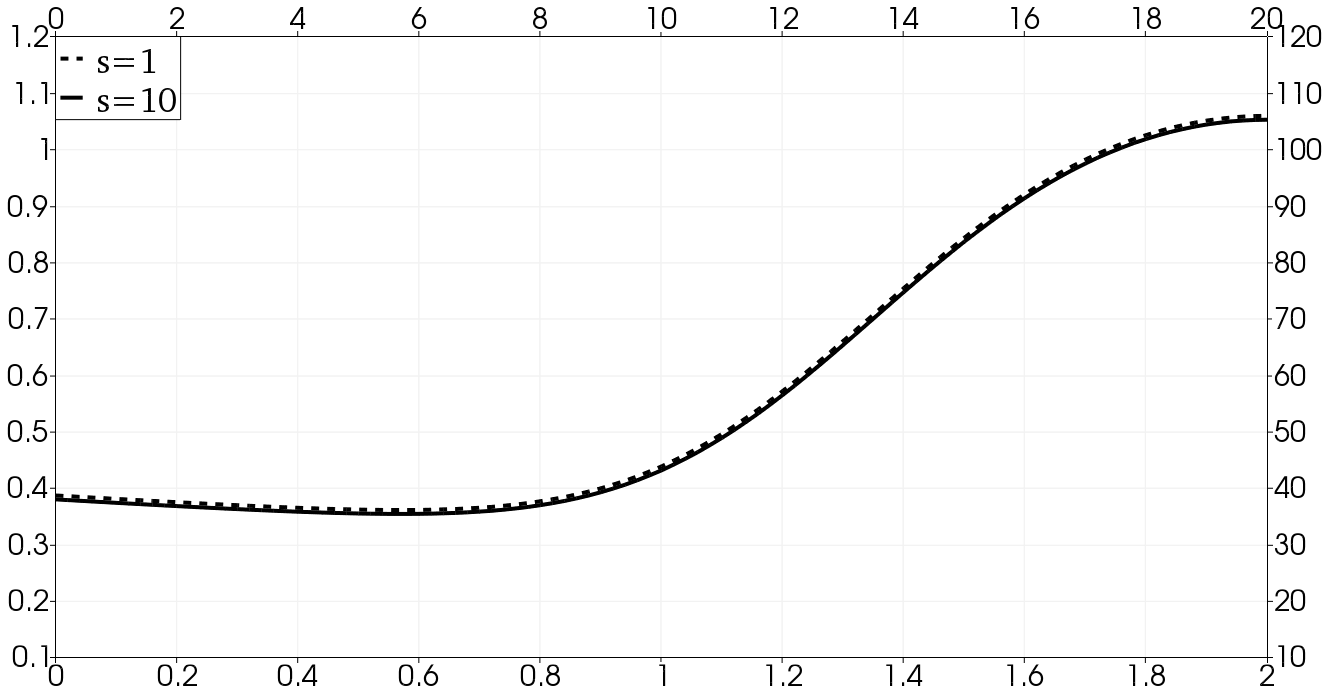}\label{fig:TshirtScaledA}}
\subfigure[Alternative method.]{\includegraphics[width=0.46\textwidth]{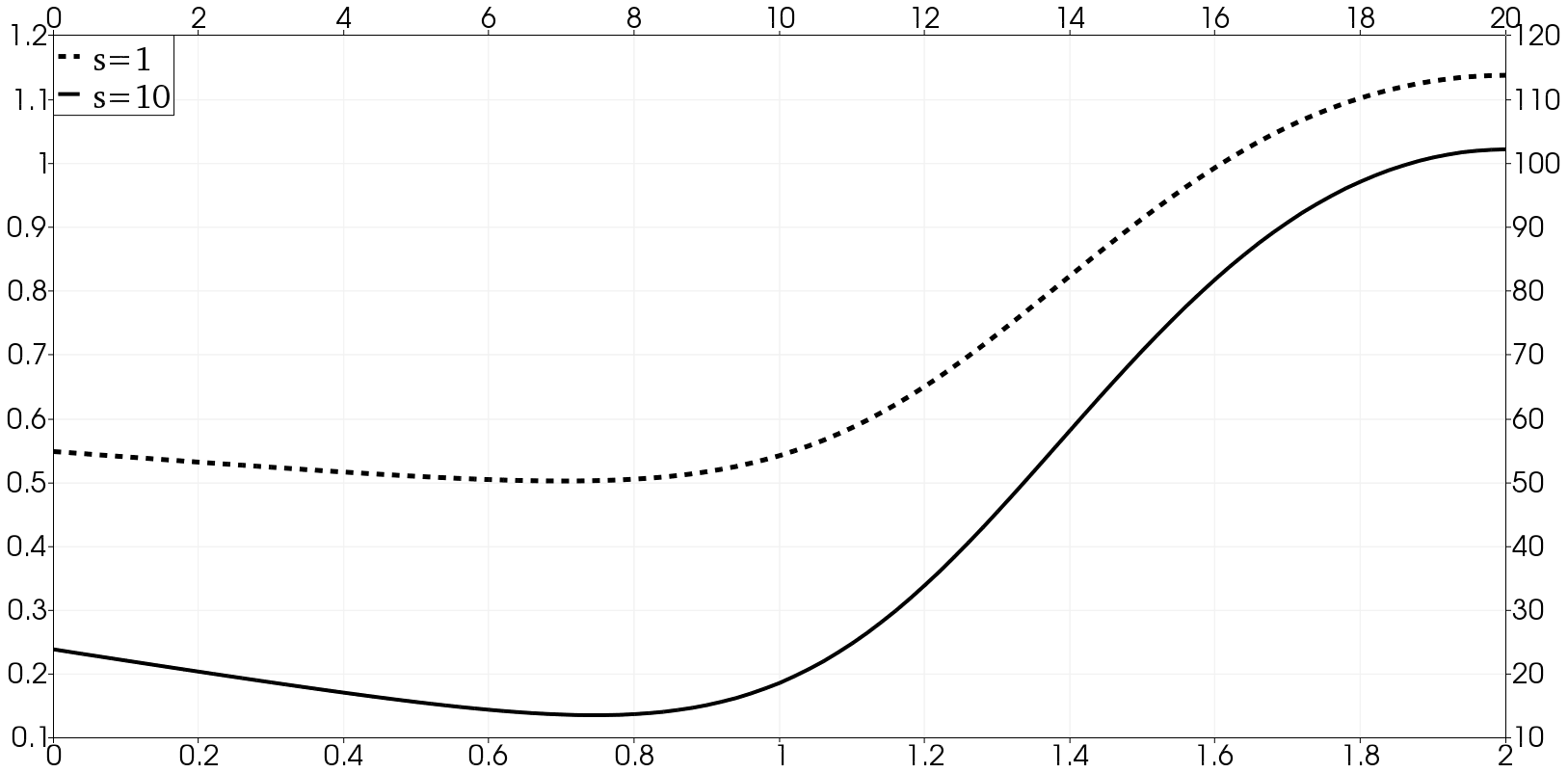}\label{fig:TshirtScaledB}}
\caption{Jet impact problem: pressure along the symmetry axis.}
\end{center}
\end{figure}
In Figures~\ref{fig:TshirtScaledA} and \ref{fig:TshirtScaledB} we represent the discrete pressures $p_h$ and $\widetilde{p}_h$ along the symmetry axis $y=1$ and $\widetilde y=10$, respectively, at time $t=3$. Two different scales are used for the space coordinates ($1$ and $10$), as well as for the pressure ($1$ and $10^2$), as indicated.
The left Figure~\ref{fig:TshirtScaledA} shows the results obtained with the proposed method, where $\theta$ is defined in (\ref{eq:ChoiceThetaFinal}). As expected, the invariance to scaling is respected up to machine accuracy.
For comparison, in the right Figure~\ref{fig:TshirtScaledB} we show the results obtained with the alternative discretization (\ref{eq:NavierStokesDiscreteAlternative}), which corresponds to $\theta=1$ in the definition of the characteristic boundary conditions. Clearly, the pressure is no longer invariant under scaling.
%
\section{Appendix}\label{sec:appendix}
%
We give here the proofs of Lemmas \ref{lemma:EulerEigen} and \ref{lemma:2} concerning the spectral decomposition of the Euler equations. 


\subsection{Proof of Lemma \ref{lemma:EulerEigen}}

Let $\begin{bmatrix}
x\\
y
\end{bmatrix}$ be an eigenvector to $\lambda$. Then
\begin{eqnarray*}
\theta x_n = \lambda y,\qquad \theta y n + (\rho v_n-\lambda)x=0.
\end{eqnarray*}
Multiplying the second equation by $\theta n$ and inserting the first equation gives
\begin{equation*}
y(\theta^2 + (\rho v_n-\lambda)\lambda)=0.
\end{equation*}
The case $y=0$ immediately gives $x=n^{\perp}$ in 2D, respectively $x=t_1$, $x=t_2$ in 3D and $\lambda =\rho v_n$. Otherwise we find the two roots $\lambdapm$ of the quadratic equation 
\begin{equation*}
\lambda^2 - \rho v_n\lambda - \theta^2=0.
\end{equation*}

\subsection{Proof of Lemma \ref{lemma:2}}

We note that
\begin{equation*}
\lambdap\lambdam = -\theta^2,\quad \lambdap^2+\lambdam^2 = (\lambdap+\lambdam)^2 - 2\lambdap\lambdam = 2\theta^2 +\rho^2 v_n^2
\end{equation*}
and that $|\Theta A_n(u)\Theta |=R |\Lambda | R^{T}$. For $d=2$, we have
\begin{equation*}
|\Lambda | =\begin{bmatrix}\rho |v_n | & 0 & 0\\ 0 & \lambdap & 0\\0& 0& -\lambdam \end{bmatrix}, \quad
 R^{T}\Theta^{-1}\psi=
 \begin{bmatrix}
 \phi_{n}^{\perp}\cdot {n}^{\perp}\\ \frac{1}{\sqrt{\theta^2+\lambdap^2}}(\lambdap \phi_n+ \chi)\\\frac{1}{\sqrt{\theta^2+\lambdam^2}}(\lambdam \phi_n+\chi) \end{bmatrix}
\end{equation*}
so we get
\begin{eqnarray*}
|A_n(u)|_{\Theta}\psi\cdot \psi' &=&|\Lambda | R^{T}\Theta^{-1}\psi\cdot R^{T}\Theta^{-1}\psi' \\
&=&\rho |v_n| \phi_{n}^{\perp}\cdot \phi_{n}^{'\perp} + \frac{\lambdap}{\theta^2+\lambdap^2}(\chi+\lambdap \phi_n)(\chi'+\lambdap \phi'_n)\\
&&- \frac{\lambdam}{\theta^2+\lambdam^2}(\chi+\lambdam \phi_n)(\chi'+\lambdam \phi'_n).
\end{eqnarray*}
It goes the same way for $d=3$ with $\phi_{n}^{\perp}\cdot \phi_{n}^{'\perp}=\phi\cdot \phi'-\phi_n\phi'_n$.
Using that 
\begin{equation}\label{eq:rel_lambda}
\frac{\lambdap}{\theta^2+\lambdap^2}=-\frac{\lambdam}{\theta^2+\lambdam^2} =\frac{1}{\lambdap-\lambdam}=\frac{1}{\sqrt{4\theta^2+\rho^2 v_n^2}}
\end{equation}
we deduce:
\begin{eqnarray*}
\begin{split}
&\frac{\lambdap}{\theta^2+\lambdap^2}(\chi+\lambdap \phi_n)(\chi'+\lambdap \phi'_n)- \frac{\lambdam}{\theta^2+\lambdam^2}(\chi+\lambdam \phi_n)(\chi'+\lambdam \phi'_n)\\
&=\frac{1}{\lambdap-\lambdam}\bigg((\chi+\lambdap \phi_n)(\chi'+\lambdap \phi'_n)+ (\chi+\lambdam \phi_n)(\chi'+\lambdam \phi'_n)\bigg)\\
&= \frac{1}{\lambdap-\lambdam}\bigg(2\chi\chi'+(\lambdap+\lambdam) (\phi_n \chi'+\phi'_n \chi)+(\lambdap^2+\lambdam^2) \phi_n\phi'_n\bigg)\\
&= \frac{1}{\lambdap-\lambdam}\bigg(2\chi\chi'+\rho v_n (\phi_n \chi'+\phi'_n \chi)+(2\theta^2 +\rho^2 v_n^2) \phi_n\phi'_n\bigg)\\
&= \frac{1}{\sqrt{4\theta^2+\rho^2 v_n^2}}\bigg(  (\chi+\rho v_n \phi_n)(\chi'+\rho v_n \phi'_n)  +\chi\chi'+2\theta^2 \phi_n\phi'_n\bigg).\\
\end{split}
\end{eqnarray*}

We next compute $\ThetaM{A_n(u)}\psi\cdot\psi' = \Lambda^{-} R^{T}\Theta^{-1}\psi\cdot R^{T}\Theta^{-1}\psi'$ where
\begin{equation*}
\Lambda^- =\begin{bmatrix}\rho v_n^- & 0 & 0\\ 0 & 0 & 0\\0& 0& \lambdam \end{bmatrix}.
\end{equation*}
We obtain:
\begin{equation*}
\ThetaM{A_n(u)}\psi\cdot\psi' = \rho v_n^-\phi_{n}^{\perp}\cdot \phi_{n}^{'\perp} + \frac{\lambdam}{\theta^2+\lambdam^2} (\lambdam\phi_n + \chi)(\lambdam\phi'_n + \chi').
\end{equation*}

Finally, noting that 
\begin{equation*}
\sqrt{4\theta^2+\rho^2 v_n^2}\simeq \theta+\rho |v_n|,\quad \chi^2+(\chi+\rho v_n \phi_n)^2   \simeq \chi^2+ \rho^2 v_n^2 \phi_n^2
\end{equation*}
 we obtain the equivalence of $ |A_n(u)|_{\Theta}\psi\cdot \psi$ with (\ref{eq:coerc}).

\bibliographystyle{siam}
\bibliography{../../../Bibliotheque/bibliotheque.bib}

\end{document}